\documentclass{article}

\usepackage[left=1in,right=1in,top=1in,bottom=1in]{geometry}

\usepackage{enumerate}
\usepackage[pdftex]{graphicx}
\usepackage{url}
\usepackage{wrapfig,amssymb,amsfonts,amsmath,amsthm,ulem,color}
\usepackage{caption}
\usepackage{bbm}
\usepackage{subcaption}
\usepackage[table]{xcolor}
\usepackage{epsfig}
\definecolor{lightgray}{gray}{0.9}

\usepackage{cite}

\newtheorem{definition}{Definition}

\newtheorem{remark}{Remark}
\newtheorem{lemma}{Lemma}
\newtheorem{theorem}{Theorem}

\newtheorem{conjecture}{Conjecture}


\def\R{\mathbb R}

\def\X{\mathbb X}

\def\A{\mathcal{A}}

\def\C{\mathcal C}
\def\S{\mathcal S}
\def\Re{\mathcal R}

\def\X2{\mathbf{X^{(2)}}}
\def\X{\mathbf{X}}

\def\I{\mathcal{I}}

\title{Stochastic analysis of biochemical reaction networks with absolute concentration robustness}

\author{David F. Anderson\thanks{Department of Mathematics, University of
  Wisconsin, Madison;  anderson@math.wisc.edu.},
 \and
Germ\'an A. Enciso\thanks{Department of Mathematics, University of California, Irvine; enciso@uci.edu.}, 
\and Matthew D. Johnston\thanks{Department of Mathematics, University of
  Wisconsin, Madison; mjohnston3@wisc.edu.}}

\begin{document}

\maketitle

\begin{abstract}
  It has recently been shown that structural conditions on the reaction network, rather than a `fine-tuning' of system parameters, often suffice to impart `absolute concentration robustness' on a wide class of biologically relevant, deterministically modeled mass-action systems [Shinar and Feinberg, Science, 2010].  We show here that fundamentally different conclusions about the long-term behavior of such systems are reached if the systems are instead modeled with stochastic dynamics and a discrete state space. Specifically, we characterize a large class of models that exhibit convergence to a positive robust equilibrium in the deterministic setting, whereas trajectories of the corresponding stochastic models are necessarily absorbed by a set of states that reside on the boundary of the state space, i.e. the system undergoes an extinction event.  If the time to extinction is large relative to the relevant time-scales of the system, the process will appear to settle down to a stationary distribution long before the inevitable extinction will occur.  This quasi-stationary distribution is considered  for two systems taken from the literature, and results consistent with absolute concentration robustness are recovered by showing that the quasi-stationary distribution of the robust species approaches a Poisson distribution.       
  \end{abstract}

\begin{flushleft}
\noindent \textbf{Keywords:} Markov chain, absolute concentration robustness, stability, quasi-stationary distributions, chemical reaction network theory, deficiency
\end{flushleft}

\section{Introduction}

  The interaction networks of chemical reaction systems of cellular processes are notoriously complex.  Despite this, hidden within the complexity there are often underlying structures that, if properly quantified, give great insight into the dynamical or stationary behavior of the system.  In this vein, Shinar and Feinberg have presented conditions on the structure of biochemical reaction networks that are sufficient to guarantee {\it absolute concentration robustness} (ACR) on a particular species of the network \cite{Sh-F1}.  When the dynamics of the system are modeled using ordinary differential equations with mass-action kinetics, a species is said to possess ACR if its concentration has the same value at every positive equilibrium concentration permitted by the system of equations, regardless of total molar concentrations.
Such a property, which allows  cells to respond in a uniform, predictable way given varying environments, is fundamental to many biological processes, including signal transduction cascades and gene regulatory networks \cite{B-F1}.


 In the two-component EnvZ/OmpR osmoregulatory signaling system, for example, it is important that the amount of phosphorylated OmpR, OmpR-P, which regulates the transcription of the porins OmpF and OmpC, is kept within tight bounds.
It has been observed that while OmpR-P is sensitive to the availability of ADP and ATP, it is relatively insensitive to changes in the overall concentrations of the signaling proteins EnvZ and OmpR \cite{B-G1}. Concentration robustness has also been experimentally observed and studied in the two-component KdpD/KdpE \cite{K-H-C-J-G1}, PhoQ/PhoP \cite{M-G1}, and CpxA/CpxR \cite{S-G1} signaling systems, and in the IDHKP-IDH glyoxylate bypass regulation system \cite{LaPorte1,Shinar2009-21}. Structural sources of robustness have also been identified in the bacterial chemotaxis pathway \cite{Alon19991,Barkai19971,S-W-S-K1}.

Consistent with these empirical results, it has been shown that deterministic mathematical models (i.e. ODE models) of the EnvZ/OmpR system yield equilibrium concentrations of OmpR-P that are stable and
do not depend on the overall concentrations of either EnvZ or OmpR \cite{Shinar20071}. This phenomenon of equilibrium concentrations being independent of total molar concentrations was the basis of \cite{Sh-F1} where Shinar and Feinberg presented their conditions on the structure of biochemical reaction networks which are sufficient to guarantee ACR on a particular species of the network.
Shinar and Feinberg relate the capacity of a network to exhibit ACR to a structural parameter called the {\it deficiency}, which is well-studied in {\it Chemical Reaction Network Theory} (CRNT) \cite{F11,H1,H-J11}. They do not consider stability of such equilibria directly, but they do apply their results to several mass-action models of biochemical networks for which stability is known, including the EnvZ/OmpR signal transduction network and the IDHKP-IDH glyoxylate bypass regulatory system.
An interesting recent addition to this framework is the  work by Karp et al. \cite{Karp1}, where the authors
carry out a linear analysis of formal expressions in a reaction network to find ACR and more general steady state invariants.

In the present work we consider stochastically modeled systems satisfying essentially the same network conditions
used by Shinar and Feinberg, and we show that strikingly different conclusions are reached pertaining to the long-term dynamics of the systems. For a wide class of biochemical reaction networks with stable ACR equilibria, we show that trajectories are necessarily  absorbed by a set of states that reside on the boundary of the positive orthant.  Hence, there is necessarily an irreversible `extinction' event in the system.  One immediate corollary to this is that the models admit no stationary distributions with mass near the equilibrium of the  deterministic model. Our results therefore demonstrate fundamentally different long-term dynamics than those observed in the corresponding deterministic models. Stochastic modeling of chemical reactions is particularly relevant in models of intracellular dynamics because critical proteins may have a low copy number per cell.

Depending upon the total molecular abundances of the constituent species, it may be that such an extinction is a rare event on the relevant time-scales of the system.   In this case, and under the assumptions used throughout this work,
the process will very likely seem to settle down to an equilibrium distribution long before the resulting instability will appear. This  distribution is called a \textit{quasi-stationary distribution}, and ACR-like results may still be obtained by consideration of this distribution.  In fact, in two examples provided here we observe that the quasi-stationary distribution of the absolutely robust species limits in a natural way to a Poisson distribution with mean value given by  the concentration predicted by the deterministic model.

The next section provides a motivating example for our main results.   In Section \ref{sec:ACR}, we formally introduce the concept of a species exhibiting ACR in the deterministic modeling context, and present both the main theoretical result from \cite{Sh-F1} together with  the main result being introduced here, Theorem \ref{thm:main}.  A detailed proof of Theorem \ref{thm:main}, and more general results, can be found in the Supplementary Material.  In Section \ref{sec:quasi} we consider the quasi-stationary distributions for the example models considered here.  We close with a brief discussion.

\section{A motivating example}
\label{sec:motivating_example}

  Consider the two-species activation/deactivation network
\begin{align}\label{eq:simple_model}
\begin{split}
\mathcal{R}_1: \; \; \; \;	A + B &\overset{\alpha}\to 2B\\
\mathcal{R}_2: \; \; \; \; \; \; \; \; \; \; \; 	B &\overset{\beta}{\to} A,
\end{split}
\end{align}
where $A$ is the active form of a protein, $B$ is the inactive form, and $\alpha$ and $\beta$ are positive rate constants \cite{Sh-F1}.  Notice that the inactive form $B$ regulates both the activation and deactivation steps of the mechanism. The usual differential equations governing the time evolution of the molar concentrations of $A$ and $B$, denoted $c_A$ and $c_B$ here, are
\begin{align}\label{eq:simple_de}
\begin{split}
	\dot c_A(t) &= -\alpha c_A(t) c_B(t) + \beta c_B(t)\\
	\dot c_B(t) &= \hspace{.1in}\alpha c_A(t)c_B(t) - \beta c_B(t).
\end{split}
\end{align}
Setting the left hand sides of the above equations to zero and solving yields the following values for the equilibrium concentrations:
\begin{align}\label{eq:simple_robust}
\begin{split}
	\bar{c}_A &= \beta/\alpha, \quad 
	\bar{c}_B = M - \beta/\alpha,
	\end{split}
\end{align}
\noindent where $M:=c_A(0)+c_B(0)$ is the total conserved protein concentration.   Equation \eqref{eq:simple_robust} shows that the deterministically modeled system \eqref{eq:simple_model} has ACR for the protein $A$, since all positive equilibria, no matter the initial condition, must satisfy $\bar{c}_A = \beta/\alpha$.    
It can also be easily checked that these equilibria are stable. The self-regulation of the mechanism \eqref{eq:simple_model}, although quite simple, predicts the remarkable property that the concentration of the active protein is kept within  tight bounds regardless of the value of the total protein concentration $M$.

Now consider the usual stochastic model for the network \eqref{eq:simple_model}, which treats the system as a continuous time Markov chain (CTMC) (see Fig. \ref{figure:CTMC_SIS_StateSpace}). Let $\mathbf{X}_A(t)$ and $\mathbf{X}_B(t)$ denote the individual counts of $A$ and $B$, respectively, and model the reactions as discrete events which occur stochastically in time. Under  usual assumptions on the rates, or propensities, of  the reactions, the first reaction can only occur if $\mathbf{X}_A(t) > 0$ and $\mathbf{X}_B(t) > 0$, and the second only if $\mathbf{X}_B(t) > 0$, which implies no reaction may proceed if $B$ is depleted completely.  
For a more thorough introduction to the stochastic models for biochemical systems see the Supplemental Material or \cite{AndKurtz20111}.

It is not hard to see that the long-term behavior of this CTMC is different from that of the deterministic model \eqref{eq:simple_de}. 
More specifically, it is possible for all of the inactive molecule to become active through the self-activation reaction $B \to A$. This sends the chain to the state
\begin{align}\label{eq:simple_stochastic}
\begin{split}
	\bar{\mathbf{X}}_A &= M, \qquad 
	\bar{\mathbf{X}}_B = 0,
	\end{split}
\end{align}
 where $M:=\mathbf{X}_A(0)+\mathbf{X}_B(0)$. After this time, neither reaction may occur and so no active molecules $A$ may be deactivated. Therefore, rather than having trajectories of the system spend most of their time near the value \eqref{eq:simple_robust}, over a sufficiently long time frame the inevitable outcome of the system is convergence to \eqref{eq:simple_stochastic}.

\begin{figure}
\centering
\fcolorbox{black}{white}{\includegraphics[width=.55\textwidth,height=0.19\textwidth]{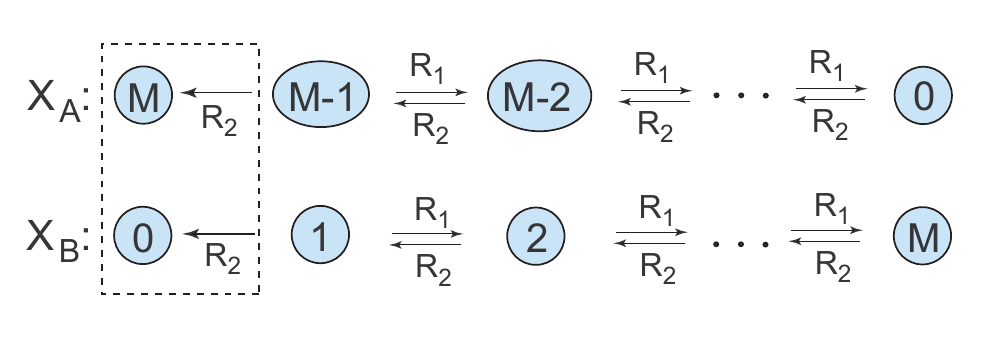}}
\caption{State space for the CTMC corresponding to \eqref{eq:simple_model}. The chain $\mathbf{X}(t) = (\mathbf{X}_A(t),\mathbf{X}_B(t))$ has a unique absorbing state at $(\bar{X}_A,\bar{X}_B) = (M,0)$ (boxed), where $M:=\mathbf{X}_A(0)+\mathbf{X}_B(0)$.}
\label{figure:CTMC_SIS_StateSpace}
\end{figure}

This  disparity between the long-term predictions of the deterministic mass-action model and that of the stochastic model at first seems to be at odds with the established result that the stochastic chain $\mathbf{X}(t) = (\mathbf{X}_1(t),\mathbf{X}_2(t),\ldots,\mathbf{X}_m(t))$ is 
well approximated by the corresponding deterministic model when molecular counts are high.  However, such results are valid only on \textit{finite time intervals}, and therefore stand silent on the long-term behavior of the models \cite{Kurtz21,Kurtz31}. Isolated examples of models exhibiting such a fundamental difference between the long-term behavior of the corresponding deterministic and stochastic models  
are well-known in the biochemical literature \cite{Allen1,Keizer1,O-S-W1,V-Q1}.

\section{Results Pertaining to Absolute Concentration Robustness}
\label{sec:ACR}

The main problem we consider here is: what structural conditions on the reaction network yield an absorption event similar to that of \eqref{eq:simple_model} for the corresponding stochastic system?
We are particularly interested in networks for which the deterministic model predicts ACR, and in this context we will use the original theorem due to Shinar and Feinberg \cite{Sh-F1}. 
We briefly introduce some terminology from chemical reaction network theory, including the network parameters $n$, $\ell$, and $s$.

We let $n$ denote the number of vertices of the reaction network.  These vertices are the linear combinations of the species at either end of a reaction arrow and are called \textit{complexes} in the chemical reaction network literature.
We will use this word, although the reference \cite{Sh-F1}  instead uses `nodes' to avoid confusion with the biological meaning of the word `complex'. 
Note that we may naturally  associate a complex with a non-negative vector $y$ in which the $j$th component of $y$ is the multiplicity of species $j$ in that complex.  For example, the complex $A + B$ in system \eqref{eq:simple_model} has associated vector $(1,1)$, whereas the complex $2B$ has associated vector $(0,2)$.

We let $\ell$ denote the number of connected components of the reaction network and associate to every reaction a {\it reaction vector} which determines the counts of the molecules gained and lost in one instance of that reaction. For example, for the reaction $y \to y'$, the reaction vector is $y' - y$, where we have slightly abused notation by writing the vector associated with a complex in the place of the complex itself.  We denote by $s$ the dimension of the span of all of the reaction vectors. For the activation/deactivation network \eqref{eq:simple_model} there are four complexes, $\{A+B, 2B, B, A\}$, and two connected components, $\{ A+B, 2B \}$ and $\{ B, A \}$. It follows that $n=4$ and $\ell = 2$.   We also notice that the reaction $A+B \to 2B$ has associated reaction vector
 $(-1,1)$, since the system loses one $A$ molecule and gains a net of one $B$ molecule due to one instance of the reaction. The reaction vector for the reaction $B \to A$ is $(1,-1)$ so that $s = 1$.

The \textit{deficiency} of a network is defined to be $\delta := n - \ell - s$.  The deficiency is known to only take non-negative values and has been utilized to show a variety of steady state results for mass-action systems, both deterministic and stochastic \cite{A-GAC-ONE1,A-C-K1,C-D-S-S1,F11,Fe21,Fe41,F21,Fe31}. For the network \eqref{eq:simple_model} we have
\[
	\delta = n - \ell - s = 4 - 2 - 1 = 1,
\]
so the deficiency is one.

We say two complexes are \textit{strongly linked} if there is a directed path of reactions from the first to the second, and also a directed path from the second back to the first.  A strong linkage class of a reaction network is a maximal subset of complexes that are strongly linked to each other.  A strong linkage class is furthermore called {\it terminal} if no complex in the class reacts to
a complex in another strong linkage class.  Complexes that do not belong to a terminal strong linkage class are called \textit{non-terminal}. For example, in the network \eqref{eq:simple_model} the complexes $A + B$ and $B$ are non-terminal, whereas the complexes $2B$ and $A$ are terminal. 
See also the network in Fig. \ref{figure:EnvZ/OmpR_mechanism}, where the terminal and non-terminal complexes are labeled in different colors.  Finally, we say that two complexes `differ only in species $S$' if the difference between them is a nonzero multiple of a single species $S$.  For example, in \eqref{eq:simple_model} the complexes $A + B$ and $B$ are both non-terminal and also differ only in species $A$.  

We can now state the main theorem of \cite{Sh-F1}.

\begin{theorem} \label{thm:marty}
	Consider a deterministic mass-action system that admits a positive steady state and suppose that the deficiency of the underlying reaction network is one.  If, in the network, there are two non-terminal complexes that differ only in species $S$, then the system has absolute concentration robustness  in $S$.
\end{theorem}
\noindent We have already seen the system \eqref{eq:simple_model} has a positive steady state if $M > \beta/\alpha$, that the underlying network has a deficiency of one, and that the non-terminal complexes $A+B$ and $B$ differ only in the species $A$. Thus, Theorem \ref{thm:marty} could be used to guarantee that the system exhibits absolute concentration robustness in $A$ even if the equilibrium could not have been calculated explicitly.

Note that Theorem \ref{thm:marty} stands silent on whether or not the equilibria of the deterministically modeled systems  are stable, either locally or globally.   In fact, there do exist ACR models for which the set of equilibria is unstable.  For example,
\begin{align*}
	A + B \overset{k_1}\to 2B,\quad 2A + B \overset{k_2}\to 3A,
\end{align*}
satisfies the requirements of Theorem \ref{thm:marty}, but the equilibrium $\bar c_A=k_1/k_2$ is  \textit{unstable.} The equilibria for the models considered in this article, however, are known to be globally stable.

\subsection{Stochastic Differences in Robustness} 

 The network \eqref{eq:simple_model} demonstrates that biochemical reaction networks satisfying the hypotheses of Theorem \ref{thm:marty}, and in which the ACR equilibria are known to be stable, may fail to exhibit similar stability when modeled stochastically. This disparity in the long-term dynamics is not restricted to only a few systems. In this section, we provide a theorem proved in the Supplemental Material which demonstrates that 
a large class of biochemical reaction networks satisfying the assumptions of Theorem \ref{thm:marty}, and thereby exhibit ACR when modeled  deterministically, have absorbing boundary states.  
In that case, when the deterministic ACR equilibria are stable, the stochastic model exhibits fundamentally different long-term dynamics.

In order to state the  result corresponding to Theorem \ref{thm:marty} for stochastically modeled systems we need a few more basic definitions. A chemical reaction network is said to be \textit{conservative} if there is a vector $w$ with strictly positive components for which $w \cdot (y' - y) = 0$ for all reactions $y \to y'$.  Note that in this case there is necessarily a conserved quantity $M>0$ so that for the given species set $\left\{ S_1, S_2, \ldots, S_m \right\}$ we have that
\[
	M:= w_{1} c_{1}(t)+ w_{2} c_{2}(t) + \cdots + w_{m} c_{m}(t)
\]
is invariant to the dynamics of the system.   For example, we have already seen that $M:=c_{A}(t) + c_{B}(t)$ is a conserved quantity for the system \eqref{eq:simple_model}.

Enumerating the reactions arbitrarily, we denote by $\lambda_k(x)$ the rate, or propensity, function of the reaction $y_k \to y_k'$, and note that it is reasonable to assume that $\lambda_k(x) >0$ if and only if $x_i\ge y_{ki}$, which says a reaction may only occur if there are a sufficient number of molecules in the system.   We say any family of rate functions satisfying this simple condition is \textit{stoichiometrically admissible}.   For example, the rate function $\lambda_k(x) = x_B$ would be stoichiometrically admissible for the reaction $B \to A$, but not for the reaction $A + B \to 2B$.  
A common choice for the propensities $\lambda_k(x)$ is {\it stochastic mass-action}
\begin{equation*}
\lambda_k(x) = \left\{ \begin{array}{ll} \displaystyle{\frac{k_k}{V^{|y_k| - 1}} \prod_{j=1}^m \binom{x_j}{y_{kj}}}
\; \; \; & \mbox{if } x_j-y_{kj} \geq 0 \mbox{ for all } j \\ 0, & \mbox{otherwise} \end{array} \right.
\end{equation*}
where $|y_k| = \sum_{j=1}^m y_{kj}$ and $V$ is the volume of the reaction vessel.  Note that under the assumption of mass-action kinetics, the propensity function is proportional to the number of ways in which one can choose the molecules necessary for the reaction to occur. 
Also, note that stochastic mass-action kinetics is stoichiometrically admissible.

We say a complex $y_k$ is \textit{turned off} at a particular state value $x$ if $x_i < y_{ki}$ for at least one $i$;
 otherwise we say the complex is \textit{turned on}.  Note that $\lambda_k(x) = 0$ for all $x$ at which $y_k$ is turned off.
  Next, we say that a complex $y$ is \textit{dominated} by the complex $y'$ (denoted $y \ll y'$) if  $y_i' \leq y_i$ for all $i=1, \ldots, m$.
   In particular, whenever two complexes differ in only one species, one necessarily dominates the other.  The intuition here is that if $y'$ is ever turned off,
   then so is $y$. 
  
Finally, we remind the reader that a state $\mathbf{X}$ of a CTMC is \textit{recurrent} if the chain satisfying 
$\mathbf{X}(0)  = \mathbf{X}$ returns to $\mathbf{X}$  with probability one, and that a recurrent state is \textit{positive recurrent} if the expected value of the return time
is finite.  It is also a basic fact that stationary distributions  only give mass to \textit{positive recurrent} states, showing that the long-term dynamics are restricted to those states.

The following is the main theoretical result of the paper and should be compared with Theorem \ref{thm:marty}. It, along with more general results allowing for \textit{higher deficiency}, is proved in the Supplemental Material.  
 
\begin{theorem}
\label{thm:main}
	Consider a reaction network which is conservative, has a deficiency of one, and for which the deterministically modeled mass-action system admits a positive equilibrium for some choice of rate constants.  Suppose that, in the network, there are two non-terminal complexes, $y_1$ and $y_2$ say, for which $y_1 \ll y_2$.  Then, for any choice of stoichiometrically admissible kinetics, 
	all non-terminal complexes of the network are turned off 
	at each positive recurrent state of the stochastically modeled system.
\end{theorem}
Hence, a trajectory of the stochastically modeled system will, with a probability of one, be absorbed by a set of states for which all of the non-terminal complexes are turned off.   In particular, the propensity of reactions out of the complexes $y_1$ and $y_2$ will be zero.

For instance, in the simple system \eqref{eq:simple_model} we have $A+B \ll A$, and Theorem~\ref{thm:main} states that the complexes $A+B$ and $B$ are turned off at any positive recurrent state.  This is easily verified since the only positive recurrent state is given by \eqref{eq:simple_stochastic}.
  In the supplementary material, we also provide a theorem characterizing the limiting behavior of the system \textit{after} the absorption event. In particular, we show that the reduced network is weakly reversible and has a deficiency of zero, and so admits a stationary distribution which is a product of Poissons \cite{A-C-K1}.   We also provide in the Supplementary Material an example in which the conclusions of the theorem do not hold if only the conservation requirement is dropped.

\begin{figure*}[t]
	\centering
	\begin{subfigure}[b]{0.4\textwidth}
	\centering
	\includegraphics[width=0.98\textwidth]{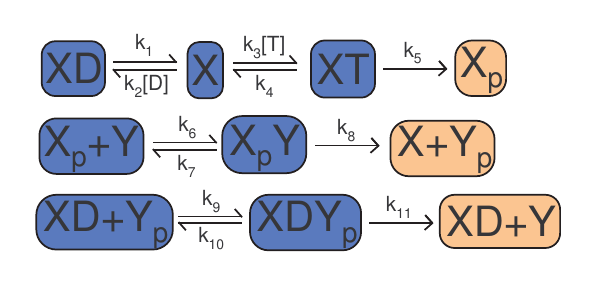}
	\caption{EnvZ/OmpR network}
	\label{figure1}
	\end{subfigure}
	\qquad \qquad
	\begin{subtable}[b]{0.4\textwidth}
	\centering
	\rowcolors{1}{}{lightgray}
	\footnotesize{
\begin{tabular}{|rl|rl|}
		\hline
		$k_1$ & $0.5$ $s^{-1}$ & $\kappa_1$ & $0.5$\\
		$k_2$ & $0.5$ $s^{-1}$ & $\kappa_2$ & $0.5$\\
		$k_3$ & $0.5$ $s^{-1}$ & $\kappa_3$ & $0.5$\\
		$k_4$ & $0.5$ $s^{-1}$ & $\kappa_4$ & $0.5$\\
		$k_5$ & $0.1$ $s^{-1}$ & $\kappa_5$ & $0.1$\\
		$k_6$ & $0.5$ $\mu M^{-1} s^{-1}$ & $\kappa_6$ & $0.02$\\
		$k_7$ & $0.5$ $s^{-1}$ & $\kappa_7$ & $0.5$\\
		$k_8$ & $0.5$ $s^{-1}$ & $\kappa_8$ & $0.5$\\
		$k_9$ & $0.5$ $\mu M^{-1} s^{-1}$ & $\kappa_9$ & $0.02$\\
		$k_{10}$ & $0.5$ $s^{-1}$ & $\kappa_{10}$ & $0.5$\\
		$k_{11}$ & $0.1$ $s^{-1}$ & $\kappa_{11}$ & $0.1$\\
		\hline
		$[D]$ & $1$ $\mu M$ & $n_A$ & $6.022 \times 10^{23}$ \\
		$[T]$ & $1$ $\mu M$ & $V$ & $4.151 \times 10^{-17}$ $L$ 	\\
		\hline
	\end{tabular} 
	}
	\caption{Parameter values}
	\end{subtable}
\caption{Hypothetical mechanism for the EnvZ/OmpR signal transduction system in \textit{Escherichia coli}. The mechanism is represented in (a) where the terminal (light orange) and nonterminal (dark blue) complexes are labeled. The parameter values used for numerical simulations are given in (b).}
\label{figure:EnvZ/OmpR_mechanism}
\end{figure*} 

\subsection{EnvZ/OmpR Signaling System}

In order to demonstrate Theorem \ref{thm:main}   on a more complicated example, we now consider a model of the two-component EnvZ/OmpR signaling system in \textit{Escherichia coli} \cite{B-G1}, which was also studied in \cite{Sh-F1}. The histidine kinase EnvZ is sensitive to extracellular osmolarity, the input of the system, and in its active phosphorylated form EnvZ-P is able to phosphorylate the response regulator OmpR into the active form OmpR-P. OmpR-P in turn signals the transcription of the porins OmpF and OmpC. The level of OmpR-P can therefore be thought of as the output of the system. EnvZ is also known to play a role in the regulation of the level of OmpR-P through dephosphorylation \cite{Zhu20001}.

Multiple mechanisms have been proposed for the EnvZ/OmpR system \cite{B-G1,Igoshin20081,Shinar20071}. We will consider the mechanism given in Fig. \ref{figure:EnvZ/OmpR_mechanism} which was proposed in \cite{Sh-F1,Shinar20071}. Here, it is imagined that ADP ($D$) and ATP ($T$) interact with EnvZ ($X$) to produce bound complexes, but that only ATP can successfully transfer the phosphate group ($P$) to EnvZ to form EnvZ-P ($X_p$). EnvZ-P may then transfer the phosphate group to OmpR ($Y$) to form OmpR-P ($Y_p$) while the modified EnvZ-ADP complex regulates the dephophorylation of OmpR-P. Assuming $[D]$, $[T]$ and $[P]$ are of sufficient quantity to be relatively unchanged by the course of the reaction, we may incorporate them into the rate constants, yielding the network contained in Fig. \ref{figure:EnvZ/OmpR_mechanism}.

Notice that the nonterminal complexes $XD$ and $XD+Y_p$ differ only in the species $Y_p$. Since the network also has deficiency one (see Supplemental Material), by Theorem \ref{thm:marty} we may conclude that the deterministically modeled mass-action system exhibits ACR in $Y_p$. It is shown in the Supplemental Materials of \cite{Sh-F1} that the ACR value is
\begin{equation}
\label{cyp}
Y_p = \frac{k_1k_3k_5(k_{10}+k_{11})[T]}{k_2(k_4+k_5)k_9k_{11}[D]}.
\end{equation}
The corresponding equilibrium is stable and, consequently, the deterministic model predicts that the active form of the response regulator, OmpR-P, is robust to the overall level of the signaling proteins EnvZ and OmpR. That is to say, the prediction is that the system will exhibit a similar response regardless of differences in these internal characteristics.

We now consider the stochastic model for the network in Fig. \ref{figure:EnvZ/OmpR_mechanism}.   It can be  seen that, in addition to satisfying the assumptions of Theorem \ref{thm:marty}, the network has the conservation relations
\begin{equation}
\label{eqn:conservation}
\begin{split}
X_{tot} & := X+XD+XT+X_p+X_pY+XDY_p\\
Y_{tot} & := Y+X_pY+XDY_p+Y_p.
\end{split}
\end{equation}
Since the sum of these two relations has support on all species, it follows that the network is conservative.
By Theorem \ref{thm:main}, the stochastic model converges in finite time to a state (or set of states) for which all non-terminal complexes are turned off. 

Note that, since the species $X$, $XD$, $XT$, $X_pY$ and $XDY_p$ are also non-terminal complexes, Theorem \ref{thm:main} guarantees that each will be zero after the inevitable absorption event.  Since $X_{tot}$ is conserved, we may also conclude that $X_p = X_{tot}$ at this time.  Finally, as $X_p+ Y$ is also non-terminal, we may also conclude that $Y=0$.
 In this way, the unique sink of the stochastic chain is seen to be
\begin{equation}
\label{stochastic_applied}
\begin{split}
Y_p &=Y_{tot}\\
X_p &=X_{tot}\\
X   &=XD=XT=X_pY=XDY_p = Y=0.
\end{split}
\end{equation}

\section{Time until absorption and quasi-stationary distributions}
\label{sec:quasi}

A disparity between the long-term behavior of deterministic and stochastic models of some reaction networks is well-known in the literature \cite{Allen1,Keizer1,O-S-W1,V-Q1}. The activation/deactivation network \eqref{eq:simple_model} is a canonical example 
that mimics the well-studied stochastic susceptible-infected-susceptible (SIS) epidemic model \cite{K-L1,Nasell19961,Nasell19991,W-D1}, where $A$ and $B$ correspond to the number of healthy and infected individuals, respectively.
In that setting,  
the state \eqref{eq:simple_stochastic} corresponds to a true ``extinction state.'' A similar extinction effect is also achieved in many population biology models where stochastic effects may irreversibly drive a population to zero \cite{Meleard_Quasi1}.

Although the chain associated with \eqref{eq:simple_model} will inevitably converge to the extinction state \eqref{eq:simple_stochastic}, such an event may be exceedingly rare on  biologically reasonable timescales. 
In such situations,  the absorbing state is not of practical concern, and, in fact, the process may seem to settle to a stationary distribution. This distribution is called the \textit{quasi-stationary probability distribution}, and it is useful in analyzing the transient behavior of a model before the absorption event.  
We suspect that stable ACR-like behavior may still be attained in the present context by consideration of this distribution.

Suppose we denote the absorbing states of a process $\mathbf{X}(t)$ by $\partial A$.  If we define  $p_{\mathbf{x}}(t) = P \left\{ \mathbf{X}(t) = {\mathbf{x}} \right\}$, where ${\mathbf{x}}$ is an arbitrary state in the state space, and $P_{\partial A}(t) = \sum_{{\mathbf{x}} \in \partial A} p_{\mathbf{x}}(t)$, then the  transition probabilities conditioned upon non-extinction, $q_{\mathbf{x}}(t)$, are given for $t\ge 0$  by
\begin{equation}
\label{quasi-stationary}
q_{\mathbf{x}}(t) = P \left\{ \mathbf{X}(t) = {\mathbf{x}} \; | \; \mathbf{X}(t) \notin \partial A\right\} = \frac{p_{\mathbf{x}}(t)}{1 - p_{\partial A}(t)}.
\end{equation}
\noindent 
The limiting vector $\pi:=\lim_{t\to \infty} q(t)$, if it exists, is the \textit{quasi-limiting} distribution of the process, which equals the \textit{quasi-stationary} distribution in the present context.    See \cite{D-S1} for proofs of the facts that such a distribution exists, and is unique, in the setting of our Theorem \ref{thm:main}.   See \cite{Meleard_Quasi1} for a recent survey article on quasi-stationary distributions in the context of population processes.

\begin{figure}
	 \centering
        \includegraphics[width=0.7\textwidth
        ]{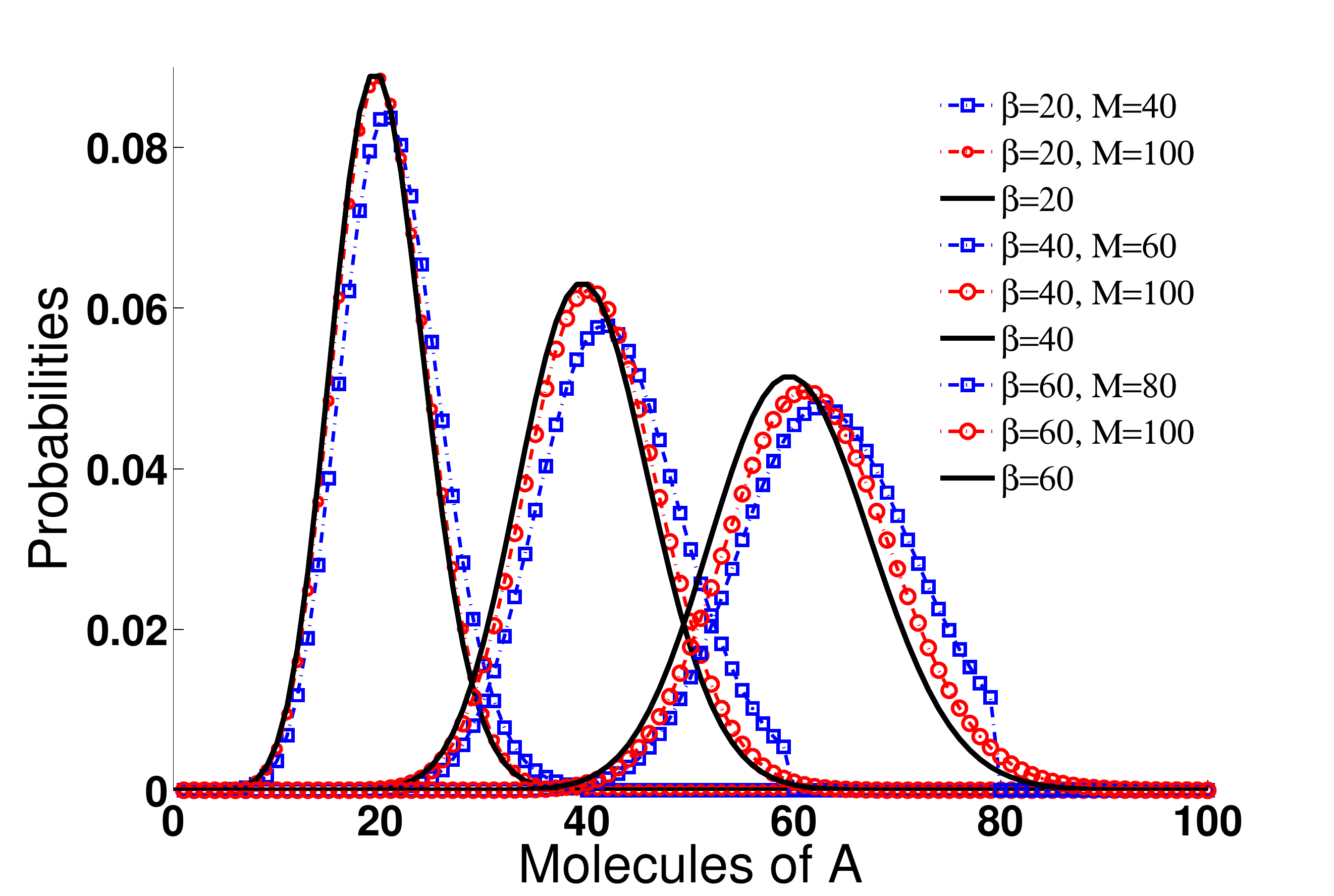}
        \caption{Quasi-stationary distributions of $\mathbf{X}_A$ with $\alpha = 1$ and various values of $\beta$ and $M$.   As $M \to \infty$ the quasi-stationary distributions approach the overlain Poisson distributions \eqref{poisson} (solid line). The iterative procedure of \cite{Cavender19781,K-L1,Nasell19961} was used to construct the plots.}
        \label{figure:quasi-stationary_SIS}
\end{figure}

Reconsider the activation/deactivation system \eqref{eq:simple_model}.  We begin by reimagining the chain $\mathbf{X}(t) = (\mathbf{X}_A(t),\mathbf{X}_B(t))$ as a birth-death process following $\mathbf{X}_B(t)$ with the extinction state $\mathbf{X}_B =0$ corresponding to \eqref{eq:simple_stochastic}. The first reaction corresponds to a ``birth'' since $B$ is increased by one while the second reaction corresponds to a ``death'' since $B$ is decreased by one. The chain for  $\mathbf{X}_A$ can then be determined by the conservation 
  relation $\mathbf{X}_A(t) = M - \mathbf{X}_B(t)$. Under mass-action kinetics, the corresponding birth and death propensities, $\lambda(i)$ and $\mu(i)$, respectively, are given by 
\begin{equation}
\label{propensities}
\begin{split}
\lambda(i) & =  \alpha i ( M - i), \\
\mu(i) &  = \beta i,
\end{split}
\end{equation}
where $i = 0, \ldots, M$ corresponds to the state with $i$ molecules of $B$. Notice that $\lambda(0)=\lambda(M)=\mu(0) =0$ so that $\mathbf{X}_B = 0$ is an absorbing state and $\mathbf{X}_B = M$ is a reflecting state. Up to rescaling of the rate constants, this chain is identical to the stochastic SIS epidemic model considered in \cite{K-L1,Nasell19961,Nasell19991}.

We are interested in whether the quasi-stationary distribution displays a form of robustness in $\mathbf{X}_A$ with respect to changes in overall molecularity $M$.  Furthermore, we are interested in the case of large $M$, since that is when the time to extinction is large.  
 In the Supplemental Material we prove that as $M\to \infty$, the quasi-stationary distribution for $\mathbf{X}_A$ approaches the Poisson distribution
 \begin{equation}
\pi(i) = \frac{e^{-\left( \frac{\beta}{\alpha} \right)}}{i!} \left( \frac{\beta}{\alpha} \right)^i,
\label{poisson}
\end{equation}
which has previously been shown to be the limit of one commonly used approximation to the quasi-stationary distribution \cite{H-C-L-H1}.  In Fig. \ref{figure:quasi-stationary_SIS}, 
we graphically demonstrate the convergence by providing realizations of the quasi-stationary distribution for different values of $\beta$ and $M$.

Now consider the EnvZ/OmpR signaling system from Fig. \ref{figure:EnvZ/OmpR_mechanism}.
As in the simple activation/deactivation network, the expected time before entering the state \eqref{stochastic_applied} may be very large. We therefore consider the quasi-stationary distribution of the process.  
This distribution cannot be computed using a simple analytic formula, so we approximate it numerically
and point the reader to the Supplementary Material for the details of our computational methods.  
We use the parameter values given in the table in Fig. \ref{figure:EnvZ/OmpR_mechanism}(b), which have been chosen to be in close agreement with the values in \cite{Igoshin20081}.
 It was found in \cite{Cai20021} that a typical {\it E. coli} cell has roughly $100$ total molecules of EnvZ and $3500$ molecules of OmpR. Therefore, for our simulations we chose a ratio of $Y_{tot}:X_{tot}$ of $35:1$. Based on the deterministic model, the anticipated mean of the ACR species $Y_p$ in the quasi-stationary distribution is $25$.

The results of the simulations are shown in Fig. \ref{figure:Yp}. In Fig. \ref{figure:Yp}(a) we see that, for low molecularity of EnvZ and OmpR ($1$ and $35$, respectively), the chains converge to the boundary state \eqref{stochastic_applied}, even while the quasi-stationary distribution becomes apparent. 
In Fig. \ref{figure:Yp}(b) we see that the quasi-stationary distribution of $Y_p$ appears to converge to a Poisson distribution centered around the deterministic steady state as the total molecularity grows. 



\begin{figure*}
   \centering
	\begin{subfigure}[b]{0.47\textwidth}
	\centering
	\includegraphics[width=0.99\textwidth]{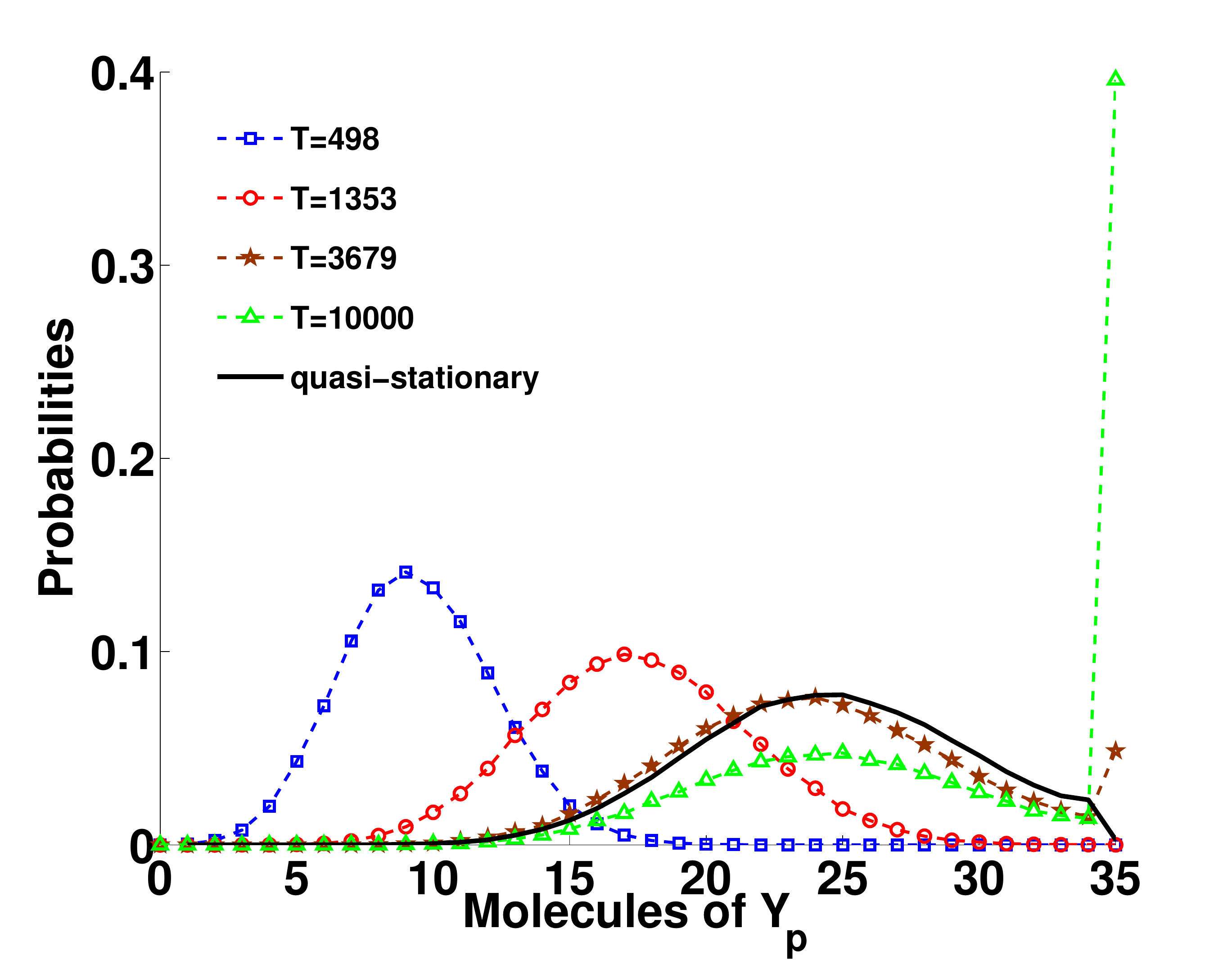}
	\caption{Probability profiles over time for $X_{tot}=1$, and $Y_{tot}=35$.}
	\label{figure:gillespie}
	\end{subfigure}
	\begin{subfigure}[b]{.47\textwidth}
	\centering
	\includegraphics[width=0.99\textwidth]{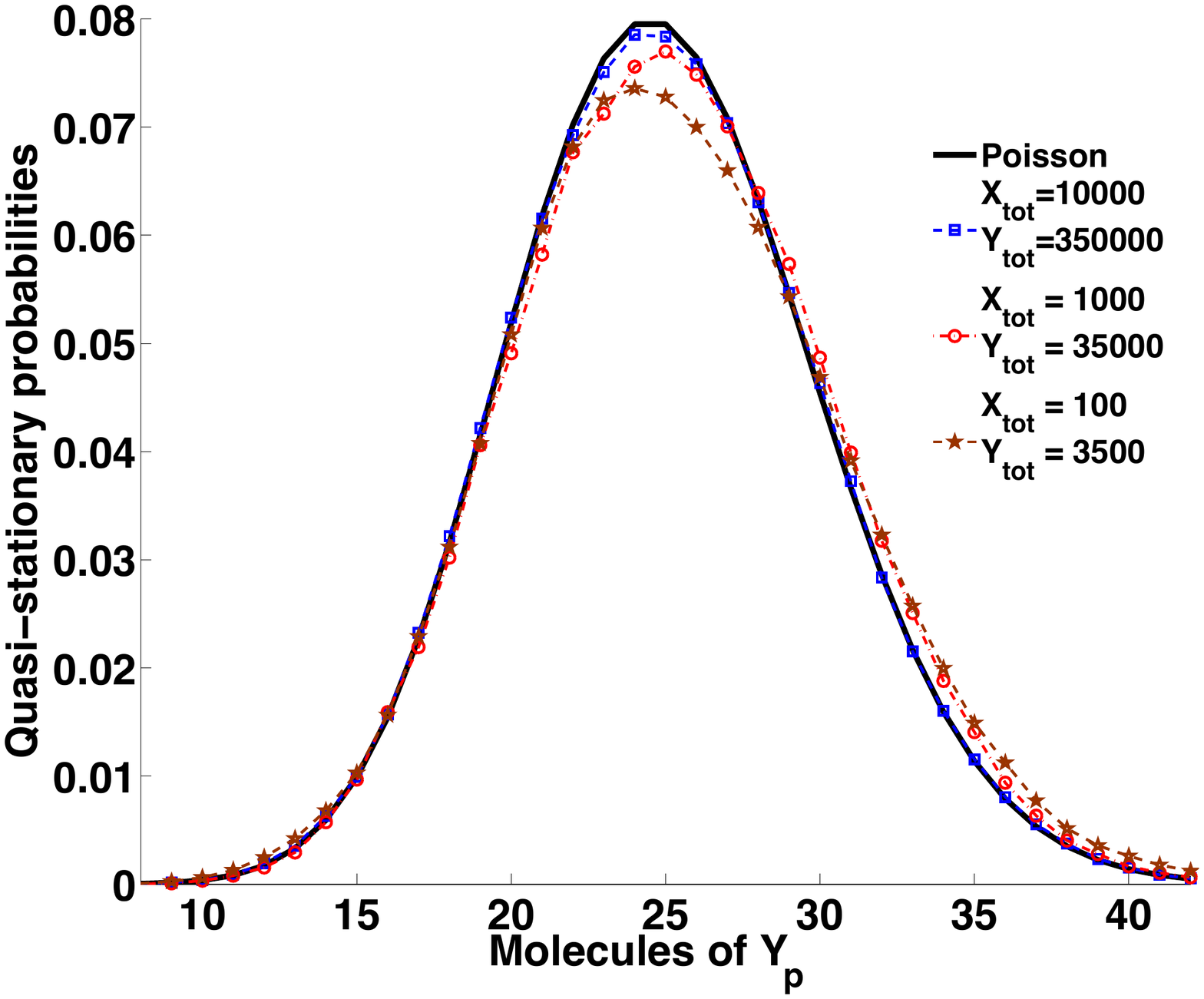}
	\caption{Quasi-stationary distributions of $Y_p$ for various $X_{tot}$, $Y_{tot}$.}
	\label{figure:quasi}
	\end{subfigure}
	\caption{Results of numerical simulation for the EnvZ/OmpR mechanism of Fig. \ref{figure:EnvZ/OmpR_mechanism}. In (a), we display the approximate probability distribution of the chain $\mathbf{X}(t)$ at four time points, computed by averaging $10^5$ independent realizations of the process with $X_{tot}=1$ and $Y_{tot}=35$, and  initial conditions $X(0) = X_{tot}$ and $Y(0)=Y_{tot}$. The corresponding quasi-stationary distribution is overlain (dotted). In (b), we display approximations of the quasi-stationary distributions for various values of $Y_{tot}$ and $X_{tot}$ in the ratio $35:1$. Details of the simulations are provided in the Supplementary Material.  For comparison, a Poisson distribution with mean $25$ is overlain (black). Convergence to the Poisson distribution as the total molecularity increases is apparent.}
	\label{figure:Yp}
	\hspace*{\fill}
\end{figure*}

\section{Discussion}

Robustness and stability in the face of varying  environments is of fundamental importance to the proper functioning of many biological processes, and understanding this behavior is one arena where mathematics can play a role in elucidating biological phenomena.  In this paper, together with its Supplemental Material, we have outlined a class of structural conditions which are sufficient to guarantee convergence of trajectories of stochastically modeled systems to an absorbing boundary set. Notably, these conditions overlap significantly with a  set of structural conditions which are known to confer ACR on the corresponding deterministic models. For such ACR models with stable equilibria, our results present a significant disparity in the predictions for the limiting behavior of the systems.

This work highlights several points.  First,
it is 
surprising that the long-term behavior of  
such a large class of systems considered in \cite{Sh-F1} is fundamentally different for 
the stochastic and deterministic models.
Second, deterministic models are typically unable to capture trapping phenomena such as those described here unless they are artificially modified, for instance by adding degradation terms. We have shown here that for a wide range of models no such ad hoc modifications are necessary for stochastically modeled systems.  Third, in regions where the time to absorption for the stochastic model is large relative to the time-scale of the system, the proper object of study when considering the stochastic analogue of ACR behavior is the quasi-stationary distribution, as opposed to the stationary distribution.

This work suggests a number of promising avenues for future research.  First, finding more general conditions for which the conclusions of Theorem \ref{thm:main} hold will be a focus. 
 In particular, weakening the requirement that the system possesses a conservation relation will allow the results to be applicable to more models arising in ecology and population processes, which typically do not satisfy such an assumption, though do often possess low numbers of the constituent species \cite{Smith20111}. 
Second, the question of when equilibria exhibiting ACR in the deterministic modeling context are stable or unstable is, to the best of the authors' knowledge, currently open and has to date received surprisingly little consideration in the literature.
 Third, we have observed in the two examples considered here
the recurrence of the Poisson distribution when analyzing a certain limiting behavior of their quasi-stationary distributions. It is a suspicion of the authors that this phenomenon applies more generally to other systems satisfying ACR,
and this will be investigated.

Results of the type presented here are not only of theoretical interest.  In particular, they are exactly the types of results required in order to automate the multi-scale  reduction methods for stochastic models of biochemical processes currently being explored in the probability literature \cite{KK20131}.  For example, in order to determine  the behavior of species operating on a time-scale that is slower than other species, it is necessary to first understand the long-term dynamics of the species on the \textit{fast time-scale}  in order to perform either the necessary stochastic averaging, or to recognize that some species will have gone extinct.


This work is part of a growing research field in which mathematical methods are developed in order to rise above the bewildering complexity of biochemical processes.  Algebraic methods, for example, have been successfully employed in a number of areas related to the equilibria of mass-action systems, where the steady states of such systems form a real algebraic variety \cite{C-D-S-S1,M-Gunawardena1,M-D-S-C1}. We believe there is significant progress to be made towards the understanding of stochastically modeled systems through analysis of the underlying network, and we hope to uncover the important substructures hidden in the complexity of biochemical networks that inform system behavior, both on short and long time-frames.

\vspace{.2in}
\noindent\textbf{Acknowledgments.}
	Anderson was supported by NSF grants DMS-1009275 and DMS-1318832.   Johnston was supported by NSF grant DMS-1009275 and NIH grant R01-GM086881.  Enciso was supported by NSF grants DMS-1122478 and DMS-1129008.   We gratefully acknowledge the American  Institute of Mathematics (AIM) for hosting a workshop at which this research was initiated.



\pagebreak

\appendix
\begin{center}
\noindent {\large Supporting  Material for:\\ \vspace{0.1in} \textbf{Stochastic analysis of biochemical reaction networks with absolute concentration robustness}}
\end{center}

\section{Introduction}

In this supplementary material, we will, among other things, provide the proof of the main theorem in the article text (Theorem \ref{thm:cor1}).   We will also provide a 
 result, Theorem \ref{thm:main_general}, which gives more general conditions under which  an absorption event is guaranteed to occur.  In particular, we note that  Theorem \ref{thm:main_general} applies to models with deficiencies that are greater than one, whereas Theorem \ref{thm:cor1} does not.  In  Section \ref{sec:higher_deficiencies} we provide  an example of a network from the biology literature which has a deficiency of two for which Theorem \ref{thm:main_general} applies, whereas Theorem \ref{thm:cor1} stands silent.  In Section \ref{sec:further} we conjecture that the conclusions of Theorem \ref{thm:main_general} and Theorem \ref{thm:cor1} hold for the whole class of systems permitting absolute concentration robustness when modeled deterministically.

The outline of the remaining text is the following.  We will begin in Section \ref{sec:Background} with a synopsis of the required terminology and background material related to chemical reaction network theory, and both the deterministic and stochastic models used for the dynamics of biochemical systems.  In Section \ref{sec:Main_Results}, we state and prove our main results.  These results give conditions on the associated network for when an absorption event is guaranteed to occur when the system is modeled with stochastic dynamics.  
The conditions of our main results overlap significantly with the  conditions of Feinberg and Shinar that guarantee absolute concentration robustness for a species in the  deterministic modeling context \cite{Sh-F}. When such equilibria are {\it stable}, therefore, the results of this paper represent a distinction in the long-term behavior of a class of deterministically and stochastically modeled systems.

An immediate question that comes to mind in light of Theorems \ref{thm:main_general} and \ref{thm:cor1} is: what is the structure and long-term behavior of the post-absorption network? In Section \ref{sec:after}, we answer this question in detail for the class of systems considered by Theorem \ref{thm:cor1}. 
 Finally, in Section \ref{analysissection} we consider the time until the guaranteed absorption event takes place.   If the time to absorption is large relative to the relevant time-scales of the system, the processes will seem to settle down long before the resulting instability will appear.  This limiting distribution is called a \textit{quasi-stationary} distribution, and is also considered in Section \ref{analysissection} for the  network models studied here.

\section{Background, terminology, and notation}
\label{sec:Background}

In this section, we will briefly introduce  necessary terminology and notation. For a more complete introduction, the reader is directed to \cite{E-T,F3,Gunawardena} for background on {\it Chemical Reaction Network Theory} (CRNT), \cite{AndKurtz2011} for background on stochastic chemical reaction systems, and \cite{Allen,Lawler} for general information on {\it Continuous Time Markov Chains} (CTMC).

We will use the following two examples throughout this paper. We will use them to both illustrate the background concepts and as an application of our main result, Theorem \ref{thm:main_general}.

\vspace{0.2in}

\noindent {\it Example 1:} Consider the activation/deactivation network
\begin{equation}
\label{system1}
\begin{split}
A + B & \stackrel{\alpha}{\rightarrow} 2B \\
B & \stackrel{\beta}{\rightarrow} A
\end{split}
\end{equation}
where $A$ and $B$ denote the active and inactive form of a protein, respectively. We imagine here that the first reaction corresponds to deactivation of an active protein facilitated by an inactive protein, and that the second reaction corresponds to spontaneous activation of the inactive protein.

We have borrowed this network and interpretation from \cite{Sh-F}. We note that the same network is produced by reversing the roles of $A$ and $B$ to produce a network of facilitated activation and spontaneous deactivation.  It is also considered as a chemical reaction network in \cite{O-S-W}, as a mechanism for the spread of rumours in \cite{Bartholomew1976}, as a model of logistic population growth \cite{Norden1982}, and as a model for $SIS$ epidemic growth in \cite{Artalejo2010,K-L,Nasell1996,Nasell1999,W-D}.

\vspace{0.2in}

\noindent {\it Example 2:} Consider the hypothetical EnvZ/OmpR signal transmission network
\begin{equation}
\label{system2}
\begin{split}
& XD \mathop{\stackrel{k_1}{\rightleftarrows}}_{k_2[D]} X \mathop{\stackrel{k_3[T]}{\rightleftarrows}}_{k_4} XT \stackrel{k_5}{\rightarrow} X_p \\
& X_p + Y \mathop{\stackrel{k_6}{\rightleftarrows}}_{k_7} X_pY \stackrel{k_8}{\rightarrow} X + Y_p \\
& XD + Y_p \mathop{\stackrel{k_9}{\rightleftarrows}}_{k_{10}} XDY_p \stackrel{k_{11}}{\rightarrow} XD + Y
\end{split}
\end{equation}
where the species are $X$=EnvZ, $Y$=OmpR, $X_p$=EnvZ-P, $Y_p$=OmpR-P, $D$=ADP, and $T$=ATP. The species $D$ and $T$, which represent ADP and ATP, respectively, are assumed to be in sufficient quantity so that binding with $X$ to form $XD$ and $XT$ do not appreciably change their overall quantities. Their kinetic effects are therefore incorporated into the rate constants.

This model was proposed in \cite{Shinar2007} and \cite{Sh-F} to underlie the EnvZ/OmpR signaling system in {\it Escherichia coli} as studied experimentally in \cite{B-G}. The individual sequence of reactions are imagined to represent the phosphate (signal) transfer from the sensor molecule EnvZ to the response molecule OmpR. The first chain of reactions corresponds to the phosphorylization of EnvZ from the donor molecules ADP and ATP. Notably, only ATP is able to successfully phosphorylate EnvZ. The second chain corresponds to the transfer of the phosphate group from EnvZ to OmpR. The third chain corresponds to the dephosphorylization of OmpR-P by the ADP-EnvZ complex. It is notable that the signaling molecule EnvZ serves a dual purpose of phosphorylating OmpR and, in the modified ADP-EnvZ form, dephosphorylating OmpR-P.

\subsection{Chemical reaction networks}
\label{crnsection}

The following is the basic object of study in CRNT \cite{E-T,F3,Gunawardena,H-J1,Sh-F}.

\begin{definition}
\label{crn}
A {\it chemical reaction network} is a triple $\{\mathcal S, \mathcal C, \mathcal R\}$ of finite sets:
\begin{enumerate}
\item 
A {\it species set} $\mathcal{S} = \{ X_1, \ldots, X_m \}$ containing the basic species/molecules capable of undergoing chemical change.
\item
A {\it reaction set} $\mathcal{R} = \{\mathcal{R}_1, \ldots, \mathcal{R}_r \}$ containing the {\it elementary reactions}
\begin{equation}
\label{eqn:reaction}
\mathcal{R}_i: \; \; \; \; \; \; \sum_{j=1}^m y_{ij} X_j \; \longrightarrow \; \sum_{j=1}^m y'_{ij} X_j, \; \; \; \; \; \; \; i = 1, \ldots, r
\end{equation}
where the {\it stoichiometric coefficients} $y_{ij},y'_{ij} \in \mathbb{Z}_{\geq 0}$ keep track of the multiplicity of the individual species within the reactions. Using a slight abuse of notation, we allow the reactions in (\ref{eqn:reaction}) to be represented as $y_i \to y_i'$ where $y_i = (y_{i1},\ldots, y_{im})$ and $y_i' = (y'_{i1},\ldots,y'_{im})$.  We say that the reaction $y_i \to y_i'$ is a reaction {\it out} of complex $y_i$ and {\it into} the complex $y_i'$.
\item
A {\it complex set} $\mathcal{C}$ containing the linear combination of the species on the left-hand and right-hand sides of the reaction arrow in (\ref{eqn:reaction}). Using the same abuse of notation as before, we allow complexes to be represented by their support vectors so that $\mathcal{C} = \bigcup_{i=1}^r \{ y_i, y_i' \}$. The number of stoichiometrically distinct complexes is denoted by $n$ (i.e. $|\mathcal{C}| = n$).
\end{enumerate}
\end{definition}

\begin{remark}
It is typical within CRNT to assume that (i) every species appears in at least one complex, (ii) every complex appears in at least one reaction, and (iii) there are no self-reactions (i.e. reactions of the form $y_i \to y_i'$ where $y_i = y_i'$).
\end{remark}


To each reaction $y_i \to y_i'$, $i=1, \ldots, r$, we furthermore associate the {\it reaction vector} $y_i' - y_i \in \mathbb{Z}^m$. The components of the reaction vectors correspond to how many copies of each species are gained or lost by each instance of an individual reaction. The {\it stoichiometric subspace} of the network is defined to be $S = \mbox{span} \left\{ y_i' - y_i \; | \; i =1, \ldots, r \right\}$ and the dimension of $S$ is denoted $s = \mbox{dim}(S)$.

By representing each stoichiometrically distinct complex only once, the chemical reaction network $(\mathcal{S},\mathcal{C},\mathcal{R})$ can be interpreted as a directed graph $G(V,E)$ where the vertex set is given by $V = \mathcal{C}$ and the edge set is given $E = \mathcal{R}$. Following \cite{Sh-F}, we introduce the following connectivity relations.
\begin{enumerate}
\item
The complexes $y, y' \in \mathcal{C}$ are {\it directly linked} (denoted $y \leftrightarrow y'$) if $y \to y'$ or $y' \to y$;
\item
The complexes $y, y' \in \mathcal{C}$ are {\it linked} (denoted $y \sim  y'$) if either $(i)$ $y = y'$, or $(ii)$  there exists a sequence of complexes such that $y = y_{\mu(1)} \leftrightarrow y_{\mu(2)} \leftrightarrow \cdots \leftrightarrow y_{\mu(l)} = y'$;
\item
There is a {\it path} from $y \in \mathcal{C}$ to $y' \in \mathcal{C}$ (denoted $y \Rightarrow y'$) if there exists a sequence of complexes such that $y = y_{\mu(1)} \rightarrow y_{\mu(2)} \rightarrow \cdots \rightarrow y_{\mu(l)} = y'$; and
\item
The complexes $y, y' \in \mathcal{C}$ are {\it strongly linked} (denoted $y \approx y'$) if either $(i)$ $y =y'$, or $(ii)$  $y \Rightarrow y'$ and $y' \Rightarrow y$.
\end{enumerate}
 The connectivity relations ``$\sim$'' and ``$\approx$'' allow the complex set $\mathcal{C}$ to be partitioned into equivalence classes called {\it linkage classes} and {\it strong linkage classes}, respectively. That is to say, $y, y' \in \mathcal{C}$ are in the same linkage class $\mathcal{L}$ (respectively, strong linkage class $\Lambda$) if and only if $y \sim y'$ (respectively, $y \approx y'$). A strong linkage class $\Lambda$ is said to be {\it terminal} if there is no reaction $y \to y'$ where $y \in \Lambda$ but $y' \not\in \Lambda$. We will say that a complex is {\it non-terminal} if it is not contained in a terminal strong linkage class. We will furthermore say that two non-terminal complexes $y$ and $y'$ {\it differ in the species} $X_i$ if $y' = y + \alpha e_i$ where $\alpha > 0$, and $e_i \in \mathbb{R}^{m}$ is the canonical basis vector with a one in the $ith$ component and zeros elsewhere.

  The set of linkage classes is denoted $\mathcal{L} = \left\{ \mathcal{L}_1, \ldots, \mathcal{L}_{\ell} \right\}$ where $| \mathcal{L} | = \ell$ and the set of terminal strong linkage classes is denoted $\Lambda = \left\{ \Lambda_1, \ldots, \Lambda_t \right\}$ where $| \Lambda | = t$. A network is called {\it weakly reversible} if the linkage classes and strong linkage classes coincide (i.e. if $y \Rightarrow y'$ implies $y' \Rightarrow y$).

Finally, we introduce the following network parameter which has been the focus of much study in the chemical reaction network literature \cite{A-GAC-ONE,C-D-S-S,Fe2,Fe4,F1,H}.

\begin{definition}
\label{deficiency}
The {\it deficiency} of a chemical reaction network $(\mathcal{S},\mathcal{C},\mathcal{R})$ is given by $\delta = n - \ell - s$ where $n$ is the number of stoichiometric distinct complexes, $\ell$ is the number of linkage classes, and $s$ is the dimension of the stoichiometric subspace $S$.
\end{definition}
\noindent The deficiency of a chemical reaction network  does not depend on the choice of kinetics or, in the case of mass-action kinetics, on the choice of rate constants, and is known to only take non-negative integer values \cite{F3,Gunawardena}.  Nevertheless, many  dynamical properties for deterministically-modeled chemical reaction systems are characterized in terms  of the underlying network's deficiency  \cite{F1,F2,Fe2,Fe3,Fe4}. 

\vspace{0.2in}

\noindent {\it Example 1:} The network (\ref{system1}) has the species set $\mathcal{S} = \left\{ A, B \right\}$, the complex set $\mathcal{C} = \left\{ A + B, 2B, B, A \right\}$, and the reaction set $\mathcal{R} = \left\{ A + B \to 2B, B \to A \right\}$. The linkage classes are $\mathcal{L}_1 = \left\{ A + B, 2B \right\}$ and $\mathcal{L}_2 = \left\{ B, A \right\}$, and the strong   linkage classes are $\Lambda_1 = \left\{ A+B \right\}$, $\Lambda_2 = \left\{ 2B \right\}$, $\Lambda_3 = \left\{ B \right\}$ and $\Lambda_4 = \left\{ A \right\}$, of which $\Lambda_2$ and $\Lambda_4$ are terminal. The non-terminal complexes are $A+B$ and $B$, which differ in only the species $A$. The deficiency can easily be computed: $\delta = n - \ell - s = 4 - 2 - 1 = 1$.

\vspace{0.2in}

\noindent {\it Example 2:} The network (\ref{system2}) has the species set
\[\mathcal{S} = \left\{ X,Y,X_p,Y_p,XD,XT,X_pY,XDY_p \right\},\]
the complex set
\[\mathcal{C} = \left\{ XD,X,XT,X_p,X_p+Y,X_pY,X+Y_p,XD+Y_p,XDY_p,XD+Y \right\},\]
and the reaction set
\[\begin{split} \mathcal{R} = & \left\{ XD \to X, X \to XD, X \to XT, XT \to X, XT \to X_p, \right. \\ & \left. \; \;  X_p +Y \to X_p Y, X_p Y \to X_p + Y, X_p Y \to X + Y_p, \right. \\ & \left. \; \;  XD + Y_p \to XDY_p,XDY_p \to XD + Y_p, XDY_p \to XD + Y \right\}. \end{split}\]
The linkage classes are $\mathcal{L}_1 = \left\{ XD,X,XT,X_p \right\}$, $\mathcal{L}_2 = \left\{ X_p + Y, X_pY,\right.$ $\left.X+Y_p \right\}$, and $\mathcal{L}_3 = \left\{ XD+Y_p,XDY_p,XD+Y \right\}$. These can be further decomposed into the the strong linkage classes $\Lambda_1 = \left\{ XD,X,XT \right\}$, $\Lambda_2 = \left\{ X_p \right\}$, $\Lambda_3 = \left\{ X_p+Y,X_pY \right\}$, $\Lambda_4 = \left\{ X+Y_p \right\}$, $\Lambda_5 = \left\{ XD+Y_p,XDY_p\right\}$, and $\Lambda_6 = \left\{ XD + Y \right\}$. Of the strong linkage classes, $\Lambda_2,$ $\Lambda_4$, and $\Lambda_6$ are terminal. It follows that $XD,X,XT,X_p+Y,X_pY,XD+Y_p,XD+Y_p,$ and $XDY_p$, are non-terminal complexes. We can see that the non-terminal complexes $XD$ and $XD + Y_p$ differ only in the species $Y_p$. The network (\ref{system2}) has ten complexes ($n=10$), three linkage classes ($\ell=3$), and the span of the reaction vectors is six-dimensional ($s=6$). It follows that (\ref{system1}) is a deficiency one network.

\subsection{Deterministic chemical reaction systems}

It is common to model the time-evolution of chemical reaction networks via a set of deterministic differential equations over continuous state variables representing the chemical concentrations of the species (i.e. $\textbf{c}_j = [X_j], j=1, \ldots, m$). This modeling choice is suitable when the reaction vessel is well-mixed and the number of all reacting molecules is large \cite{Kurtz2}.

A common kinetic rate assumption is that of the {\it law of mass-action} which states that the rate of a reaction is proportional to the product of the necessary reacting species, counted for multiplicity \cite{G-W}.   Specifically, given the {\it concentration vector} $\mathbf{c} = (c_1,c_2,\ldots,c_m) \in \mathbb{R}_{\geq 0}^m$, the rate of the reaction $y_i \to y_i'$ is
\[
	k_i \mathbf{c}^{y_i} = k_ic_1^{y_{i1}} c_2^{y_{i2}} \cdots c_m^{y_{im}},
\]
where $k_i$ is the rate constant and $y_i$ is the source complex associated with the $i$th reaction channel. 
 Other kinetic rates for the reactions have been used in the biochemical literature, including Michaelis-Menten and Hill kinetics \cite{M-M,Hill}.


Given the chemical reaction network $(\mathcal{S},\mathcal{C},\mathcal{R})$, the associated {\it mass-action system} $(\mathcal{S},\mathcal{C},\mathcal{R},k)$ is given by the system of differential equations
\begin{equation}
\label{de}
\frac{d\mathbf{c}}{dt} = \sum_{i=1}^r k_{i} ( y_i' - y_i ) \mathbf{c}^{y_i}.
\end{equation}
For any $\mathbf{c}_0 \in \mathbb{R}_{>0}^m$, it is known that $\mathbf{c}(t) \in (\mathbf{c}_0 + S) \cap \mathbb{R}_{> 0}^m$ for all $t \geq 0$, where we remind the reader that $S$ is the stoichiometric subspace of the system. Trajectories of (\ref{de}) evolve in a space of dimension $s$, which may be smaller than the state space $\mathbb{R}_{\geq 0}^m$. No trajectory of the deterministic mass-action system (\ref{de}) allows the concentration of any species to hit zero in finite time \cite{V-H}.
 
\vspace{0.2in}

\noindent {\it Example 1:} The mass-action system (\ref{de}) corresponding to the network (\ref{system1}) is
\begin{equation}
\label{system1de}
\begin{split}
\dot {\left(\begin{array}{c}
c_A\\
c_B
\end{array}\right)}
 & = \alpha \left(\begin{array}{c}
 -1\\
 1
\end{array} \right)c_A c_B  + \beta\left(\begin{array}{c}
 1\\
 -1
\end{array} \right)c_B.
\end{split}
\end{equation}

\vspace{0.2in}

\noindent {\it Example 2:} The mass-action system \eqref{de} corresponding to the network (\ref{system2}) is

\begin{equation}
\label{system2de}
\begin{split}
\dot{c}_X & = k_1 c_{XD}-(k_2[D]+k_3[T])c_{X} + k_4 c_{XT}+k_8 c_{X_pY}\\
\dot{c}_{XD} & = - k_1 c_{XD}+k_2[D] c_{X}-k_9 c_{XD}c_{Y_p} +(k_{10}+k_{11})c_{XDY_p}\\
\dot{c}_{XT} & = k_{3}[T] c_X - (k_{4}+k_{5}) c_{XT}\\
\dot{c}_{X_p} & = k_5 c_{XT} - k_6 c_{X_p}c_{Y} + k_7 c_{X_pY}\\
\dot{c}_{Y} & =  - k_6 c_{X_p} c_{Y}+k_7 c_{X_pY} + k_{11} c_{XDY_p}\\
\dot{c}_{X_pY} & = k_{6} c_{X_p}c_Y - (k_{7} + k_{8}) c_{X_pY}\\
\dot{c}_{Y_p} & = k_{8} c_{X_pY} - k_{9} c_{XD}c_{Y_p} + k_{10} c_{XDY_p}\\
\dot{c}_{XDY_p} & = k_{9} c_{XD}c_{Y_p} - (k_{10}+k_{11}) c_{XDY_p},
\end{split}
\end{equation}
where we have dropped  the vector notation.

\subsubsection{Absolute concentration robustness}


A concentration $\bar{\mathbf{c}} \in \mathbb{R}_{\ge 0}^m$ of a mass-action system (\ref{de}) is said to be an {\it  equilibrium concentration} of (\ref{de}) if
\begin{equation}
\label{equilibrium}
\sum_{i=1}^r k_{i} ( y_i' - y_i ) \bar{\mathbf{c}}^{y_i} = 0,
\end{equation}
and said to be a \textit{positive equilibrium concentration if $\bar{\mathbf{c}} \in \mathbb{R}_{> 0}^m$.}
Motivated by biochemical examples in \cite{B-G,Shinar2007,Shinar2009}, Guy Shinar and Martin Feinberg introduce the following classification of positive equilibrium concentrations in \cite{Sh-F}.

\begin{definition}
A mass-action system $(\mathcal{S},\mathcal{C},\mathcal{R},k)$ is said to possess {\it absolute concentration robustness} (ACR) in the species $X_i \in \mathcal{S}$ if $\bar{\mathbf{c}}_i$ attains the same value in every positive equilibrium concentration $\bar{\mathbf{c}} \in \mathbb{R}_{>0}^m$ of (\ref{de}).
\end{definition}

\noindent It is worth noting that the absolutely robust equilibrium value $\bar{\mathbf{c}}_i$ may depend upon the network's rate constants but {\it not} on the initial conditions. This robustness with respect to overall concentrations is especially meaningful for biochemical networks since, when combined with stability, it predicts 
 that processes requiring tight bounds in the robust species will operate similarly 
in the face of fluctuations in the overall concentrations of the other species.

The following result guarantees absolute concentration robustness in a particular species for deterministically modeled mass-action systems and is the main result of \cite{Sh-F}.  It is also the motivation for the current work.
\begin{theorem}
\label{thm:marty}
Consider a mass-action system $(\mathcal{S},\mathcal{C},\mathcal{R},k)$ governed by (\ref{de}). Suppose that:
\begin{enumerate}
\item
The system (\ref{de}) admits a positive equilibrium concentration;
\item
The deficiency of the underlying network $(\mathcal{S},\mathcal{C},\mathcal{R})$ is one (i.e. $\delta=1$); and
\item
There are non-terminal complexes which differ only in the species $X_i$.
\end{enumerate}
Then the mass-action system (\ref{de}) exhibits absolute concentration robustness in $X_i$.
\end{theorem}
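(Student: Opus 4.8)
The plan is to follow the algebraic strategy underlying \cite{Sh-F}: show that for any two positive equilibria, the componentwise logarithm of their ratio must annihilate the reaction vector joining the two distinguished non-terminal complexes, so that the corresponding coordinate cannot change. First I would rewrite the mass-action system (\ref{de}) in the standard factored form $\dot{\mathbf c} = Y A_k \Psi(\mathbf c)$, where $\Psi(\mathbf c)\in\mathbb R^n$ has entries $\mathbf c^{y}$ indexed by the complexes $y\in\mathcal C$, the matrix $Y\in\mathbb R^{m\times n}$ has the complexes as its columns, and $A_k\in\mathbb R^{n\times n}$ is the kinetic matrix of the reaction graph (nonnegative off-diagonal entries, zero column sums). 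Letting $\partial$ denote the incidence map of the reaction graph, one checks the classical identities $\operatorname{im}A_k\subseteq\operatorname{im}\partial$, $\dim\operatorname{im}\partial=n-\ell$, and $Y(\operatorname{im}\partial)=S$, which together give $\dim(\ker Y\cap\operatorname{im}\partial)=(n-\ell)-s=\delta=1$. Fix a vector $\omega$ spanning this one-dimensional space.

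Second, I would extract from the equilibrium condition (\ref{equilibrium}) that every positive equilibrium $\bar{\mathbf c}$ satisfies $A_k\Psi(\bar{\mathbf c})=\lambda(\bar{\mathbf c})\,\omega$ for some scalar $\lambda(\bar{\mathbf c})$, since $A_k\Psi(\bar{\mathbf c})\in\operatorname{im}A_k\subseteq\operatorname{im}\partial$ also lies in $\ker Y$. A short but essential point is that $\lambda(\bar{\mathbf c})\neq0$: otherwise $\bar{\mathbf c}$ would be a complex-balanced equilibrium, which is known to force the network to be weakly reversible; but hypothesis (3) produces non-terminal complexes, and no network possessing a non-terminal complex is weakly reversible --- a contradiction.

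Third, given two positive equilibria $\bar{\mathbf c},\bar{\mathbf c}^{*}$, set $\mu=\ln\bar{\mathbf c}^{*}-\ln\bar{\mathbf c}$ componentwise, so that $\Psi(\bar{\mathbf c}^{*})=Mv$ with $M=\operatorname{diag}(\Psi(\bar{\mathbf c}))$ (invertible, positive diagonal) and $v_y=e^{\mu\cdot y}$. Writing $\kappa=\lambda(\bar{\mathbf c}^{*})/\lambda(\bar{\mathbf c})$, the two relations from Step 2 give $A_kM(v-\kappa\mathbf 1)=\lambda(\bar{\mathbf c}^{*})\omega-\kappa\,\lambda(\bar{\mathbf c})\omega=0$, hence $v-\kappa\mathbf 1\in M^{-1}\ker A_k$. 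Now invoke the standard description of $\ker A_k$: it is spanned by nonnegative vectors, each supported on a single terminal strong linkage class (the stationary measures of the underlying Markov chain on the complexes), so every element of $\ker A_k$ vanishes on all non-terminal complexes. Since $M$ is diagonal, $v_y=\kappa$ for every non-terminal complex $y$; equivalently $\mu\cdot y$ is constant over the non-terminal complexes. Applying this to the non-terminal complexes of hypothesis (3), which satisfy $y'=y+\alpha e_i$ with $\alpha>0$, yields $\alpha\mu_i=\mu\cdot(y'-y)=0$, so $\mu_i=0$ and $\bar{\mathbf c}^{*}_i=\bar{\mathbf c}_i$. As $\bar{\mathbf c},\bar{\mathbf c}^{*}$ were arbitrary positive equilibria, $X_i$ has ACR.

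The only place genuine work is required --- everything else being the assembly of standard chemical reaction network theory --- is the deficiency-one input of Step 1 (that $\ker Y\cap\operatorname{im}\partial$ is exactly one-dimensional) together with the exclusion of $\lambda(\bar{\mathbf c})=0$ in Step 2. The conceptual heart is that $\delta=1$ collapses the equilibrium set just enough that $\mu\cdot y$ cannot vary across the non-terminal complexes; the hypothesis on $X_i$ then converts this single rigidity into the desired robustness.
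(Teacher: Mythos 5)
The paper does not actually prove Theorem \ref{thm:marty} --- it is imported as the main result of \cite{Sh-F} --- but your argument is correct and is essentially the \cite{Sh-F} argument that the paper re-deploys when proving Theorem \ref{thm:cor1}: your identity $\mbox{dim}(\mbox{ker}\,Y\cap\mbox{im}\,\partial)=\delta$ is the content of Lemma \ref{thm:kernel2}, your description of $\mbox{ker}\,A_k$ is Lemma \ref{thm:kernel}, and your conclusion that $v_y$ is constant across the non-terminal complexes is exactly the decomposition \eqref{eqn:99} of elements of $\mbox{ker}(YA_k)$ as $\lambda_0\bar{\mathbf{c}}^Y+\sum_\theta\lambda_\theta\mathbf{b}_\theta$. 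One small simplification: excluding $\lambda(\bar{\mathbf{c}})=0$ does not require Horn's complex-balancing theorem, since a strictly positive vector in $\mbox{ker}\,A_k$ would already contradict Lemma \ref{thm:kernel} the moment a non-terminal complex exists.
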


\noindent Theorem \ref{thm:marty} is surprising in that it presents {\it structural} conditions on the underlying reaction network which are sufficient to guarantee ACR in a particular species. That is to say, it does not depend on ``fine-tuning''  the network parameters \cite{S-W-S-K}. More details on the theorem, and further examples of ACR networks which do not satisfy Theorem \ref{thm:marty}, are contained in the Supplemental Material of \cite{Sh-F}.

\vspace{0.2in}

\noindent {\it Example 1:} We can easily compute that the only positive equilibrium concentrations permitted by (\ref{system1de}) are
\begin{equation}
\label{equil1}
\begin{split}
\bar{c}_A & = \frac{\beta}{\alpha} \\
\bar{c}_B & = M - \frac{\beta}{\alpha}
\end{split}
\end{equation}
where $M:=c_A(0)+c_B(0)$. It is readily observed that $\bar{c}_A$ has the same value for any positive equilibrium (provided $M > \beta / \alpha$). The system therefore is ACR in the species $A$.
We could also have used Theorem \ref{thm:marty} to find that $A$ is ACR since this network has positive equilibrium concentrations, is deficiency one, and  the non-terminal complexes $A+B$ and $B$ differ only in the species $A$. 

\vspace{0.2in}

\noindent {\it Example 2:} It is shown in the supplemental material of \cite{Sh-F} that every equilibrium concentration $\bar{\mathbf{c}} \in \mathbb{R}_{> 0}^m$ of (\ref{system2de}) satisfies
\begin{equation}
\label{cyp}
\bar{c}_{Y_p} = \frac{k_1k_3k_5(k_{10}+k_{11})[T]}{k_2(k_4+k_5)k_9k_{11}[D]}.
\end{equation}
This concentration depends on the network parameters but not on the concentrations of the other species, which are related by the conservation relationships
\begin{equation}
\label{921}
\begin{split} X_{tot} & = c_X + c_{X_p}+c_{XD}+c_{X_pY}+c_{XDY_p} \\
Y_{tot} & = c_Y + c_{Y_p}+c_{X_pY} + c_{XDY_p}.\end{split}
\end{equation}
It follows that (\ref{system2de}) is ACR in the species $Y_p$ (provided $Y_{tot} > \bar{c}_{Y_p}$ where $\bar{c}_{Y_p}$ is given by (\ref{cyp})).
Again, we could also have used Theorem \ref{thm:marty} to find that $Y_p$ exhibits ACR as (\ref{system2de}) has positive equilibria, the underlying network is deficiency one, and the non-terminal complexes $XD$ and $XD + Y_p$ differ only in the species $Y_p$.

\subsubsection{Reformulation of equation \eqref{de}}

A key feature of CRNT has been in isolating the dependence in the mass-action system (\ref{de}) on the individual complexes $y \in \mathcal{C}$ \cite{H-J1}. It is often convenient therefore to work in ``complex space'' rather than species space. This allows us to reformulate  equation \eqref{de} in a useful manner.  

In order to avoid excessive enumeration, we will index elements in the complex space $\mathbb{R}^{\mathcal{C}}$ explicitly by their corresponding element $y \in \mathcal{C}$. That is to say, for $v \in \mathbb{R}^{\mathcal{C}}$ we will use $v_y$ to denote the element of $v$ corresponding to the complex $y \in \mathcal{C}$. This will allow us to maintain the indexing of complexes introduced previously, i.e. we will still allow $y_i$ and $y_i'$ to correspond to the reactant and product complex, respectively, of the $i$th reaction. We note that $|\mathcal{C}| = n$ so that each vector $v \in \mathbb{R}^{\mathcal{C}}$ has $n$ elements and that, by the {\it support} of an element $v \in \mathbb{R}^{\mathcal{C}}$, we mean the set of those elements of $\mathcal{C}$ for which $v$ takes nonzero values, that is,
\begin{equation}
\label{eq:support_complex}
	\text{supp}(v) := \{y \in \C : v_y \ne 0\}.
\end{equation}
We furthermore denote  the basis vectors $\omega_y$, $y \in \mathcal{C}$, which are the unit vectors with supp$(\omega_y) = y$. The enumeration described above has a long history in chemical reaction network theory \cite{F3}.


Given a choice of vectors $\kappa \in \mathbb{R}_{> 0}^{r}$ and $\Psi \in \mathbb{R}_{\geq 0}^{\mathcal C}$, the {\it kinetic} or {\it Kirchhoff mapping} $A_\kappa: \mathbb{R}^{\mathcal C} \to \mathbb{R}^{\mathcal C}$ of a chemical reaction network $(\mathcal{S},\mathcal{C},\mathcal{R})$ is the linear mapping defined as
\begin{equation}
\label{Ak}
A_{\kappa}(\Psi) = \sum_{y \in \mathcal{C}} \left[ \mathop{\sum_{i=1}^r}_{y = y_i} \kappa_i (\omega_{y_i'}-\omega_{y_i}) \right] \Psi_{y},
\end{equation}
where the inner sum is over the reactions with source complex $y$.
\noindent The formulation (\ref{Ak}) divides the structure of a chemical reaction network according to which complexes act as source complexes for which reactions. If a complex is a source for multiple reactions, those reactions are grouped together. We furthermore define the linear mapping $Y: \mathbb{R}^{\mathcal C} \to \mathbb{R}^m$ by its action on the basis elements $\omega_y$ via
\begin{equation}
\label{Y}
Y (\omega_y) = y.
\end{equation}
It is easy to see that  the composite mapping
\begin{equation}
\label{YA}
Y A_\kappa (\Psi) = \sum_{i=1}^r \kappa_i (y_i'-y_i) \Psi_{y_i}
\end{equation}
is the right-hand-side of the mass-action form (\ref{de}) with $\kappa_i = k_i$ and $\Psi_{y_i} = \mathbf{c}^{y_i}$ for $i=1,\ldots, r$. 

The following two results regarding ker($A_\kappa$) and ker($Y A_\kappa$) can be found in \cite{Sh-F}. The first result is derived in the Appendix of \cite{H-F} and has been restated numerous times since \cite{F3,G-H}. An explicit computation of the basis of ker($A_\kappa$) appears in \cite{C-D-S-S}. The second result follows from Section 6 of \cite{H-F}.

\begin{lemma}
\label{thm:kernel}
Let $(\mathcal{S}, \mathcal{C},\mathcal{R})$ denote a chemical reaction network with terminal strong linkage classes $\Lambda_1, \ldots, \Lambda_t$. Then, for any $\kappa \in \mathbb{R}_{>0}^r$, ker$(A_\kappa)$ has a basis $\left\{ \mathbf{b}_1, \ldots, \mathbf{b}_t \right\}$ where {supp}($\mathbf{b}_\theta) =\Lambda_\theta$ for all $\theta = 1, \ldots, t$.
\end{lemma}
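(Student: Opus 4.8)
The plan is to recognize $A_\kappa$, read as a matrix, as a weighted graph Laplacian (transposed) of the reaction digraph $G(V,E)$ with vertex set $\mathcal{C}$ and edge weights $\kappa_i$, and then to exploit the strongly-connected-component structure of $G$. From \eqref{Ak}, the column of $A_\kappa$ indexed by a source complex $y$ has diagonal entry $-\sum_{i:\,y_i=y}\kappa_i\le 0$, off-diagonal entries $\sum_{i:\,y_i=y,\,y_i'=y'}\kappa_i\ge 0$, and column sum zero. A reaction joining two distinct strong linkage classes can never be reversed by a path (else the two classes would coincide), so the condensation obtained by contracting each strong linkage class is a directed acyclic graph whose sinks are exactly the terminal strong linkage classes $\Lambda_1,\dots,\Lambda_t$. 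I will order the strong linkage classes $\Lambda_1,\dots,\Lambda_t,\Lambda_{t+1},\dots,\Lambda_q$ as a reverse topological sort of this condensation (sinks, i.e.\ terminal classes, first); grouping rows and columns accordingly makes $A_\kappa$ block upper triangular, and since terminal classes have no outgoing inter-class reactions the first $t$ diagonal blocks are mutually decoupled. Thus $A_\kappa=\left(\begin{smallmatrix}A&B\\0&D\end{smallmatrix}\right)$ with $A=\mathrm{diag}(D_1,\dots,D_t)$, each $D_\theta$ being the column Laplacian of the strongly connected subgraph on $\Lambda_\theta$, and $D$ block upper triangular with diagonal blocks the restrictions of $A_\kappa$ to the non-terminal classes $\Lambda_{t+1},\dots,\Lambda_q$.

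The next step is to analyze these diagonal blocks. A non-terminal class contains a complex with a reaction leaving it, so the corresponding diagonal block of $D$ has a column with strictly negative sum; being moreover irreducible (strong connectivity) with negative diagonal and nonnegative off-diagonal entries, its negative is a nonsingular $M$-matrix, hence invertible, and therefore the block-triangular $D$ is invertible. For a terminal block $D_\theta$ every column sums to exactly zero, so $\mathbf 1^{\top}D_\theta=0$ and $D_\theta$ is singular; I claim $\ker D_\theta$ is one-dimensional and spanned by a strictly positive vector. This is the content of the directed Matrix--Tree (Tutte) theorem (see also the explicit basis in \cite{C-D-S-S}): set $(\mathbf b_\theta)_y:=\sum_{T}\prod_{e\in T}\kappa_e$, the sum running over the spanning trees $T$ of $\Lambda_\theta$ with all edges oriented toward the root $y$. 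The standard tree/cycle cancellation argument gives $D_\theta\,\mathbf b_\theta=0$, and $(\mathbf b_\theta)_y>0$ for every $y\in\Lambda_\theta$ because strong connectivity guarantees at least one such rooted spanning tree. One-dimensionality follows either from the same Matrix--Tree computation, which yields $\mathrm{rank}(D_\theta)=|\Lambda_\theta|-1$, or from the fact that $-D_\theta$ is an irreducible singular $M$-matrix. Extending $\mathbf b_\theta$ by zeros off $\Lambda_\theta$ produces a vector in $\mathbb R^{\mathcal C}$ with $\mathrm{supp}(\mathbf b_\theta)=\Lambda_\theta$.

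Finally I assemble the pieces. Writing $\Psi=(v,w)\in\ker A_\kappa$ split into terminal and non-terminal coordinates, the lower block equation gives $Dw=0$, hence $w=0$; the upper block equation then gives $Av=0$, and since $A$ is block diagonal $v$ must be a concatenation of elements of $\ker D_1,\dots,\ker D_t$. Therefore $\ker A_\kappa=\bigoplus_{\theta=1}^{t}\ker D_\theta$ inside $\mathbb R^{\mathcal C}$, which is spanned by $\mathbf b_1,\dots,\mathbf b_t$; these are linearly independent because their supports $\Lambda_1,\dots,\Lambda_t$ are pairwise disjoint. This gives the asserted basis with $\mathrm{supp}(\mathbf b_\theta)=\Lambda_\theta$ for each $\theta$.

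I expect the main obstacle to be the step that genuinely uses strong connectivity, namely that $\ker D_\theta$ is exactly one-dimensional and admits a strictly positive generator (so that the support is all of $\Lambda_\theta$, not merely contained in it). Depending on how self-contained one wants the argument, this is handled either by invoking the directed Matrix--Tree theorem, with the rooted-tree formula above simultaneously supplying the explicit generator and the rank count, or by the Perron--Frobenius theory of irreducible $M$-matrices; the accompanying combinatorial bookkeeping — acyclicity of the condensation, the reverse-topological ordering, and the resulting block-triangular shape with decoupled terminal blocks — is routine.
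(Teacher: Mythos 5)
Your proof is correct and complete. Note that the paper does not actually prove this lemma: it defers to the literature, stating that the result ``is derived in the Appendix of \cite{H-F}'' with the explicit kernel basis computed in \cite{C-D-S-S}. What you have written is a self-contained rendition of essentially that standard argument: you read $A_\kappa$ as a (column) weighted Laplacian, order the strong linkage classes by a reverse topological sort of the acyclic condensation so that $A_\kappa$ becomes block upper triangular with the terminal blocks decoupled, kill the non-terminal part by observing that each non-terminal diagonal block is an irreducible, column-diagonally-dominant matrix with at least one strictly dominant column (hence the negative of a nonsingular $M$-matrix), and identify each terminal block's one-dimensional kernel with the strictly positive Matrix--Tree vector. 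The only points worth making explicit in a final write-up are the degenerate cases: a singleton terminal class with no outgoing reactions contributes a $1\times 1$ zero block (kernel spanned by the corresponding unit vector, consistent with the empty-tree convention in your rooted-tree formula), while a singleton non-terminal class contributes a $1\times 1$ strictly negative block; and the observation that every complex of a non-terminal class is the source of some reaction, so that every column of a non-terminal diagonal block has a strictly negative diagonal entry, which is what makes the $M$-matrix/Taussky argument apply. With those remarks your argument is airtight, and it simultaneously delivers the positivity of the generators, i.e.\ the exact support equality $\mathrm{supp}(\mathbf{b}_\theta)=\Lambda_\theta$ rather than mere containment, which is the part of the lemma the rest of the paper actually uses.
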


\begin{lemma}
\label{thm:kernel2}
Let $(\mathcal{S},\mathcal{C},\mathcal{R})$ denote a chemical reaction network with deficiency $\delta$ and $t$ terminal strong linkage classes. Then
\begin{equation}
\label{97}
\mbox{dim}(\mbox{ker} (Y A_\kappa)) \leq \delta + t.
\end{equation}
\end{lemma}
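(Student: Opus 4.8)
The plan is to reduce the desired bound to two facts: the exact dimension of $\ker(A_\kappa)$, which is handed to us by Lemma~\ref{thm:kernel}, and the classical reading of the deficiency as the dimension of an intersection living in complex space.

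First I would record the trivial inclusion $\ker(A_\kappa)\subseteq\ker(YA_\kappa)$ together with $\dim\ker(A_\kappa)=t$ from Lemma~\ref{thm:kernel}. Restricting $A_\kappa$ to the subspace $\ker(YA_\kappa)$ gives a linear map whose kernel is $\ker(A_\kappa)\cap\ker(YA_\kappa)=\ker(A_\kappa)$ and whose image is $A_\kappa(\ker(YA_\kappa))$, so the rank--nullity theorem yields
\[
\dim\ker(YA_\kappa) = t + \dim A_\kappa\bigl(\ker(YA_\kappa)\bigr).
\]
Because $YA_\kappa v = 0$ says precisely that $A_\kappa v\in\ker Y$, we have $A_\kappa(\ker(YA_\kappa))\subseteq \ker Y\cap\operatorname{Im}(A_\kappa)$, so the whole problem comes down to showing $\dim\bigl(\ker Y\cap\operatorname{Im}(A_\kappa)\bigr)\le\delta$.

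Next I would bring in the subspace $\widetilde S := \operatorname{span}\{\omega_{y_i'}-\omega_{y_i} : i=1,\dots,r\}\subseteq\mathbb{R}^{\mathcal C}$, the span of the reaction arrows in complex space. The formula $A_\kappa(\Psi)=\sum_{i=1}^r\kappa_i(\omega_{y_i'}-\omega_{y_i})\Psi_{y_i}$ shows $\operatorname{Im}(A_\kappa)\subseteq\widetilde S$, so it suffices to prove $\dim(\ker Y\cap\widetilde S)=\delta$. But $\widetilde S$ is the image of the incidence map of the reaction graph $G(V,E)$, which has $n$ vertices and $\ell$ connected components (one per linkage class), hence $\dim\widetilde S = n-\ell$. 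Since $Y(\omega_{y_i'}-\omega_{y_i})=y_i'-y_i$, the restriction $Y|_{\widetilde S}$ has image $S$ of dimension $s$ and kernel $\ker Y\cap\widetilde S$; rank--nullity gives $\dim(\ker Y\cap\widetilde S)=(n-\ell)-s=\delta$. Chaining this with the previous display yields $\dim\ker(YA_\kappa)\le t+\delta$, as claimed.

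The only step with content beyond linear-algebra bookkeeping is the identity $\dim\widetilde S=n-\ell$, i.e.\ that the rank of a directed graph's incidence matrix equals the number of vertices minus the number of connected components; I would either cite this or prove it quickly by choosing a spanning forest of $G(V,E)$ and checking that the associated edge vectors $\omega_{y_i'}-\omega_{y_i}$ form a basis of $\widetilde S$. Alternatively, one can bypass $\widetilde S$ entirely and invoke Section~6 of \cite{H-F}, where the characterization $\delta=\dim(\ker Y\cap\widetilde S)$ is established directly; the argument then collapses to the first paragraph plus the inclusion $\operatorname{Im}(A_\kappa)\subseteq\widetilde S$. I expect the only place a careless write-up could slip is in keeping straight which restriction of $A_\kappa$ is being used and that $\ker(A_\kappa)$ genuinely sits inside $\ker(YA_\kappa)$, so that the kernel of the restricted map is all of $\ker(A_\kappa)$.
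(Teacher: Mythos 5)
Your proof is correct. The paper does not actually prove this lemma; it simply cites Section~6 of \cite{H-F}, so there is no in-text argument to compare against. Your write-up is essentially a self-contained reconstruction of the standard argument behind that citation: restrict $A_\kappa$ to $\ker(YA_\kappa)$, use Lemma~\ref{thm:kernel} to get the nullity $t$ (the inclusion $\ker(A_\kappa)\subseteq\ker(YA_\kappa)$ does make the kernel of the restriction all of $\ker(A_\kappa)$, as you note), and bound the image by $\ker Y\cap\widetilde S$, whose dimension is exactly $\delta$ by the classical identity $\dim\widetilde S=n-\ell$ (incidence-matrix rank, using the standing assumption that every complex appears in some reaction so that the graph's components are precisely the $\ell$ linkage classes) together with rank--nullity applied to $Y|_{\widetilde S}$, whose image is $S$ of dimension $s$. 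All steps check out, and the chain $\dim A_\kappa(\ker(YA_\kappa))\le\dim(\ker Y\cap\operatorname{Im}A_\kappa)\le\dim(\ker Y\cap\widetilde S)=\delta$ gives the stated bound; your fallback of citing \cite{H-F} for $\delta=\dim(\ker Y\cap\widetilde S)$ is exactly what the paper itself does, so either version is acceptable.
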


\noindent Note that the condition supp$(\mathbf{b}_{\theta}) = \Lambda_{\theta}$ is a set equality property which is well-defined since both are subsets of $\mathcal{C}$ (supp$(\mathbf{b}_{\theta})$ by (\ref{eq:support_complex}) and $\Lambda_{\theta}$ by the definitions of Section \ref{crnsection}).



\subsection{Stochastic models of chemical reaction systems}


Chemical reaction systems can also be modeled with stochastic dynamics as {\it continuous-time Markov chains} (CTMC).  The formulation here closely follows that in \cite{AndKurtz2011}, to which we point the interested reader. In this setting, we let $X_j(t) \in \mathbb{Z}_{\geq 0}$ denote the {\it number} of molecules of $X_j$ at time $t$ and consider the {\it Markov chain} $\mathbf{X}(t) = (X_1(t), X_2(t), \ldots, X_m(t)) \in \mathbb{Z}_{\geq 0}^m$ evolving continuously in time over the discrete state space $\mathbb{Z}_{\geq 0}^m$. At the occurrence of  a reaction $y_i \to y_i'$ the chain instantaneously updates to the new state
\[
	\mathbf{X}(t) = \mathbf{X}(t-) + (y_i'-y_i),
\]
 where $\mathbf{X}(t-)= \lim_{h \to 0^+} \mathbf{X}(t - h)$ is the value of the chain right before the jump. The state of the chain at time $t$ can therefore be given by
\begin{equation}
\label{def:1}
\mathbf{X}(t) = \mathbf{X}(0) + \sum_{i=1}^r N_i(t)(y_i' - y_i)
\end{equation}
where the counting process $N_i(t)$ keeps track of the number of times the $i$th reaction has occurred by time $t$. The counting processes can be further formulated as a function of the state-dependent reaction {\it propensities} $\lambda_i(\mathbf{X}(t))$ by
\begin{equation}
\label{def:2}
N_i(t) = Y_i \left( \int_0^t \lambda_i(\mathbf{X}(s)) \; ds \right),
\end{equation}
where $Y_i(\cdot), i=1, \ldots, r$, are independent, unit-rate Poisson processes.  The propensity functions $\lambda_i$ are the analog of the deterministic rate functions $k_i \mathbf{c}^{y_i}$ in equation \eqref{de},  and are often termed {\it  intensity} functions in the mathematics literature.  Sample trajectories of (\ref{def:1}) and (\ref{def:2}) can be computed by a simulation process known as the next reaction method \cite{G-B,Anderson}, with a similar representation being simulated via the well-known {\it Gillespie algorithm} \cite{Gillespie}.  

As with their deterministic continuous-state counterparts, reactions may only occur in the stochastic setting given sufficient multiplicity of their constituent reactant molecules. This is captured by the following definition.
\begin{definition}
\label{stoichiometricallyadmissible}
The propensities $\lambda_i(\mathbf{X})$, $i=1,\ldots, r$, are said to be {\it stoichiometrically admissible} if $\lambda_i(\mathbf{X}) > 0$ for all $\mathbf{X} \in \mathbb{Z}_{\geq 0}^m$ such that $X_{j} \geq y_{ij}$ for all $j = 1, \ldots, m$, and $\lambda_i(\mathbf{X}) = 0$ otherwise.
\end{definition}
\noindent Although stronger restrictions on the propensities are often made (e.g. monotonicity in the species counts), this definition will be sufficient for the results in this paper. The Markov chain formed by considering a chemical reaction network $(\mathcal{S},\mathcal{C},\mathcal{R})$ together with stoichiometrically admissible propensities is called a {\it stochastic chemical reaction system}. We say a complex is {\it turned off} at a particular state if the propensity of each reaction with that complex as its source is zero at that state value; otherwise we say the complex is {\it turned on}.

A common choice for the propensities $\lambda_i(\mathbf{X})$ is {\it stochastic mass-action}
\begin{equation}
\label{def:massaction}
\lambda_i(\mathbf{X}) = \left\{ \begin{array}{ll} \displaystyle{\frac{k_i}{V^{|y_i| - 1}} \prod_{j=1}^m \frac{X_j!}{(X_j-y_{ij})!y_{ij}!}} \; \; \; & \mbox{if } X_j-y_{ij} \geq 0 \mbox{ for all } j \\ 0, & \mbox{otherwise} \end{array} \right.
\end{equation}
where $|y_i| = \sum_{j=1}^m y_{ij}$ and $V$ is the volume of the reaction vessel. The stochastic chemical reaction system with mass-action propensities (\ref{def:massaction}) is called a {\it stochastic mass-action system}. 

Parametrizing the model \eqref{def:1} by $V$ and denoting by $\mathbf{X}^V(t)$ the model with volume $V$, trajectories $\mathbf{X}^V(t)/V$ of the stochastic model with mass-action propensities (\ref{def:massaction}) are known to converge almost surely on compact time intervals to their deterministic mass-action counterparts \eqref{de} with rate constants $k_i$ in the large-scale limit as $V \to \infty$, so long as $\mathbf{X}_j^V(0)/V$ converges to a non-zero constant for each $j$  \cite{Kurtz3}.

An alternative approach to analyzing stochastic chemical reaction systems is to consider the propagation of the probability of a given chain $\mathbf{X}(t)$ being in a given state as a function of time. We let $P_{t}(\X)=P_t \left\{ \mathbf{X}(t)=\mathbf{X} \right\}$ denote the probability of the chain $\mathbf{X}(t)$ being in the state $\mathbf{X}$ at time $t$. If we define $\lambda_0(\mathbf{X}) = \sum_{i=1}^r \lambda_i(\mathbf{X})$ then, for each state $\mathbf{X} \in \mathbb{Z}_{\geq 0}^m$, we have
\begin{equation}
\label{def:prob}
\frac{dP_{t}(\X)}{dt} = - \lambda_0(\mathbf{X})P_{t}(\X) + \sum_{i=1}^r \lambda_i(\mathbf{X}-(y_i'-y_i))P_{t}(\X-(y_i' - y_i)),
\end{equation}
with an initial condition determined by an initial distribution.
The equation \eqref{def:prob} is called the {\it chemical master equation} in the biology literature, and is called {\it Kolmogorov's forward equations} in the mathematics literature.  Note that the system \eqref{def:prob} can be formally written as
\begin{equation}
\label{def:cme}
\frac{d}{dt}P_t = P_t \A,
\end{equation}
where $P_t$ is the row vector of probabilities of being in a particular state at time $t$ and $\A$ is the generator matrix of the CTMC, which is defined via the above equations. Although (\ref{def:cme}) is a linear system of ordinary differential equations, the scale is typically large (potentially infinite-dimensional) and consequently exact probabilistic solutions to (\ref{def:cme}) are often difficult or impossible to determine explicitly.  Fixed points of equation \eqref{def:cme} are of particular interest as they correspond to stationary distributions of the system and, as a consequence, describe the 
long-term behavior of the chain.

\subsection{Irreducibility, positive recurrence, and stationary distributions}
\label{irreducibilitysection}

The connectivity of states is a key determinant of the long-term behavior of chains $\mathbf{X}(t)$. To that end, we now introduce the following  terminology relevant to how the states of a stochastic chemical reaction system are connected.  
\begin{enumerate}
\item
A state $\mathbf{X} \in \mathbb{Z}_{\geq 0}^m$ {\it has mass} on a complex $y_i \in \mathcal{C}$ if $X_j \geq y_{ij}$ for all $j =1, \ldots, m$;
\item
A state $\mathbf{Y} \in \mathbb{Z}_{\geq 0}^m$ is {\it directly accessible} from $\mathbf{X} \in \mathbb{Z}_{\geq 0}^m$ (denoted $\mathbf{X} \to \mathbf{Y}$) if $\mathbf{Y} = \mathbf{X} + (y_i'-y_i)$ for some $i=1,\ldots, r$ and $\mathbf{X}$ has mass on $y_i$;
\item
A state $\mathbf{Y} \in \mathbb{Z}_{\geq 0}^m$ is {\it accessible} from $\mathbf{X} \in \mathbb{Z}_{\geq 0}^m$ (denoted $\mathbf{X} \leadsto \mathbf{Y}$) if there exists a sequence of states such that $\mathbf{X} = \mathbf{X}_{\mu(1)} \to \mathbf{X}_{\mu(2)} \to \cdots \to \mathbf{X}_{\mu(l)} = \mathbf{Y}$;
\item
The states $\mathbf{X}, \mathbf{Y} \in \mathbb{Z}_{\geq 0}^m$ are said to {\it communicate} (denoted $\mathbf{X} \leftrightsquigarrow \mathbf{Y}$) if $(i)$ $\mathbf{Y} = \mathbf{X}$, or $(ii)$ $\mathbf{X} \leadsto \mathbf{Y}$ and $\mathbf{Y} \leadsto \mathbf{X}$; and
\item
A state $\mathbf{X} \in \mathbb{Z}_{\geq 0}^m$ is {\it recurrent} (respectively, {\it transient}) if the Markov chain starting at $\mathbf{X}(0)=\mathbf{X}$ satisfies
\[P \left\{ T_\mathbf{X} < \infty \; | \; \mathbf{X}(0) = \mathbf{X} \right\} = 1 \; \; \; (< 1)\]
where $T_{\mathbf{X}}$ is the first return time to $\mathbf{X}$. A recurrent state $\mathbf{X} \in \mathbb{R}_{\geq 0}^m$ is furthermore called {\it positive recurrent} (respectively, {\it null recurrent}) if
\[\mathbbm{E} ( T_\mathbf{X} \; | \; \mathbf{X}(0) = \mathbf{X} ) < \infty \; \; \; (= \infty).\]
\end{enumerate}
\noindent That is to say, a state $\mathbf{X} \in \mathbb{Z}_{\geq 0}^m$ is recurrent if the chain $\mathbf{X}(t)$ satisfying $\mathbf{X}(0) = \mathbf{X}$ is guaranteed to return to $\mathbf{X}$, and positive recurrent if the expected return time is finite.


The relation ``$\leftrightsquigarrow$'' allows the state space $\mathbb{Z}_{\geq 0}^m$ to be partitioned into {\it irreducible communicating classes} or {\it irreducible components}. The states $\mathbf{X}, \mathbf{Y} \in \mathbb{Z}_{\geq 0}^m$ are in the same irreducible component $\mathcal{I} \subseteq \mathbb{Z}_{\geq 0}^m$ if and only if $\mathbf{X} \leftrightsquigarrow \mathbf{Y}$. An irreducible component $\mathcal{I} \subseteq \mathbb{Z}_{\geq 0}^m$ is said to be {\it closed} if $\mathbf{X} \not\leadsto \mathbf{Y}$ for all $\mathbf{X} \in \mathcal{I}$ and $\mathbf{Y} \not\in \mathcal{I}$, and is said to be {\it absorbing} if it is closed and $\mathbf{Y} \to \mathbf{X}$ for some $\mathbf{X} \in \mathcal{I}$ and $\mathbf{Y} \not\in \mathcal{I}$. It is a standard result from introductory texts on continuous time Markov chains that for any irreducible component $\mathcal{I} \subseteq \mathbb{Z}_{\geq 0}^m$, if $\mathbf{X} \leadsto \mathbf{Y}$ for $\mathbf{X},\mathbf{Y} \in \mathcal{I}$ and $\mathbf{X}$ is (positive) recurrent, then $\mathbf{Y}$ is (positive) recurrent, and that if $\mathbf{Y}$ is transient then $\mathbf{X}$ is transient \cite{Lawler}. Consequently, recurrence, positive recurrence, and transience are class properties of irreducible components \cite{Lawler}.



In order to determine the long-term behavior of stochastic processes it is important to consider the {\it stationary distributions} $\pi = (\pi(\mathbf{X}))_{\mathbf{X} \in \mathbb{Z}_{\geq 0}^m}$, which are the non-negative fixed points of (\ref{def:cme}), normalized to sum to one.  Specifically, $\pi$ is a stationary distribution if it satisfies the following 
\begin{enumerate}[(i)]
\item
$\sum_{\mathbf{X} \in \mathbb{Z}_{\geq 0}^m} \pi(\mathbf{X}) = 1$;
\item
$0 \leq \pi(\mathbf{X}) \leq 1$ for all $\mathbf{X} \in \mathbb{Z}_{\geq 0}^m$; and
\item $\pi \mathcal A = 0$ where $\mathcal A$ is the generator of the CTMC (\ref{def:cme}).
\end{enumerate}
It is a standard result that the stationary distributions of continuous-time Markov chains, if they exist, are restricted to the support of the closed irreducible components and are unique on these components. That is to say, if $\mathcal{I}_1, \ldots, \mathcal{I}_p$, where $p$ could be infinity, are closed irreducible components of a continuous time Markov chain $\mathbf{X}(t)$, then each component has a unique stationary distribution $\pi_{\mathcal{I}_j} = (\pi(\mathbf{X}))_{\mathbf{X} \in \mathcal{I}_j}$, $j=1, \ldots, p$, and any stationary distribution $\pi$ of the process can be expressed as
\begin{equation}
\label{def:stationary2}
\pi = \sum_{j=1}^p \gamma_j \pi_{\mathcal{I}_j},
\end{equation}
where $0 \leq \gamma_j \leq 1$ and $\sum_{j=1}^p \gamma_j = 1$.  See, for example,  \cite{Lawler}.

\section{Main results}
\label{sec:Main_Results}

In this section we state and prove our main results, Theorem \ref{thm:main_general} and Theorem \ref{thm:cor1}.

We begin in Section \ref{robustnesssection} by motivating the notion that deterministic and stochastic models can exhibit fundamentally different limiting behavior through consideration of the networks (\ref{system1}) and (\ref{system2}).  Next, in Section \ref{proofsection}, we provide statements and proofs of our main results, which provide conditions for when an absorption event is guaranteed to occur for a stochastically modeled network.  In Section \ref{sec:after} we provide a theorem detailing the deficiency, connectivity properties, and long-term behavior of the \textit{post-absorption} network.  In Section \ref{sec:notes} we show three things: (i) that Theorem \ref{thm:main_general}  may be successfully applied to networks more general than those covered by Theorem \ref{thm:cor1}, which is stated in the main article text, (ii)  that the results of Theorem \ref{thm:main_general} and Theorem \ref{thm:cor1} may fail for networks for which a global conservation relationship does not hold, and (iii) that the conclusions of Theorems \ref{thm:main_general} and \ref{thm:cor1} hold for more ACR models than those which fit the assumptions of the theorems, leading to a conjecture.  In Section \ref{analysissection}, we consider the time until the guaranteed absorption takes place, and the quasi-stationary distributions of the network models.


\subsection{Differences in long-term behavior for ACR systems}
\label{robustnesssection}

We are interested in when the long-term behavior of deterministically modeled networks, as characterized by their stable equilibria, is qualitatively different than the long-term behavior of stochastically modeled networks, as characterized by their stationary distributions and recurrence properties.  Specifically, we are interested in whether or not trajectories of the stochastic  systems become concentrated around deterministically predicted stable equilibrium concentrations.

\vspace{0.2in}

\noindent {\it Example 1:} Trajectories of the stochastic chemical reaction system corresponding to (\ref{system1}) have different long-term behavior than  those of the deterministically modeled mass-action system (\ref{system1de}). For all initial counts $X_A(0)$ and $X_B(0)$, we can see that the state
\begin{equation}
\label{system1boundary}
(\bar{X}_A,\bar{X}_B) = (M,0),
\end{equation}
where $M:= X_A(0) + X_B(0)$, is a trapping state since neither reaction may proceed from this state. This corresponds to an inaccessible boundary equilibrium concentration in the deterministic setting \cite{V-H}.

The state (\ref{system1boundary}) is also, however, an {\it accessible} state for the corresponding stochastic mass-action system. From any positive state $(X_A(t), X_B(t))$ of the chain  it is possible to transition irreversibly through the repeated occurrences of $B \to A$ to the state (\ref{system1boundary}). It follows that, for any initial counts, (\ref{system1boundary}) is the unique absorbing state of the chain. Hence, for any value $M>0$, the unique stationary distribution only has mass on \eqref{system1boundary} and not near the  positive equilibrium $(\bar{c}_A,\bar{c}_B) = (\beta/\alpha,M-\beta/\alpha)$ of the deterministic model.

\vspace{0.2in}

\noindent {\it Example 2:} Although more challenging to see, trajectories of the stochastic chemical reaction system corresponding to the system (\ref{system2}) also have different long-term behavior than those of the deterministic model (\ref{system2de}). Consider the state
\begin{equation}
\label{system2boundary}
\begin{split}
X_p & = X_{tot}>0,\\
Y_p & = Y_{tot}>0,\\
X & =Y=XD=XT=X_pY=XDY_p=0
\end{split}
\end{equation}
where $X_{tot}$ and $Y_{tot}$ are given by
\begin{equation}
\label{922}
\begin{split} X_{tot} & = X(0) + X_p(0)+XD(0)+X_pY(0)+XDY_p(0) \\
Y_{tot} & = Y(0) + Y_p(0)+X_pY(0) + XDY_p(0).\end{split}
\end{equation}
It is clear that this is an absorbing state since all reaction propensities are zero at (\ref{system2boundary}). Modeled deterministically, the corresponding state is a boundary equilibrium that is not accessible from the positive orthant \cite{V-H}. In the stochastic setting, however, the state can be shown to be accessible in the following manner. Start with arbitrary positive counts for every species and suppose the following reactions take place sequentially:
\begin{enumerate}
\item
Convert all $XDY_p$ into $Y$ and $XD$ through the reaction $XDY_p \to XD+ Y$.
\item
Convert all $XD, X,$ and $XT$ to $X_p$ through the chain of reactions $XD \to X \to XT \to X_p$.
\item
Convert as much $Y$ into $Y_p$ as possible through $X_p + Y \to X_p Y \to X + Y_p$.
\item
Repeat $2.$ and $3.$ until all $Y$ is converted to $Y_p$ and all $XD$, $X$, and $XT$ is converted to $X_p$.
\end{enumerate}
This algorithm converts all derivatives of $X$ into $X_p$ and all derivatives of $Y$ into $Y_p$. Although such a chain of reactions is a low probability event, it is irreversible and
by standard probabilistic arguments the process will converge to the absorbing state with a probability of one.   Note also that this conclusion holds for any choice of stoichiometric admissible propensities, not just mass-action propensities (\ref{def:massaction}). 


\vspace{0.2in}

These examples demonstrate that the stability of robust equilibrium concentrations predicted for deterministic mass-action systems (\ref{de}) may not be taken for granted when the underlying stochastic nature of the chemical reactions should not be ignored. In the next section we show that this difference in long-term behavior is the rule, as opposed to the exception.  In Section \ref{analysissection} we discuss both the time until such absorption takes place and, in the case of a large expected time until absorption,  the  methods used to understand the behavior of these processes before absorption takes place.

\subsection{Statements and proofs of main results}
\label{proofsection}

We restate two definitions presented in the main article text.

\begin{definition}
\label{domination}
	We say that the complex $y \in \mathcal{C}$ is {\it dominated} by the complex $y' \in \mathcal{C}$ (denoted $y \ll y'$) if $y_i' \leq y_i$ for all $i=1, \ldots, m$.
\end{definition}

\begin{definition}
\label{conservative}
A chemical reaction network $(\mathcal{S},\mathcal{C},\mathcal{R})$ is said to be {\it conservative} if there exists a vector $w \in \mathbb{R}_{>0}^m$ such that $w \cdot (y'-y) = 0$ for all $y\to y' \in \Re$.
\end{definition}

\noindent We again note that  whenever two complexes differ in only one species, one necessarily dominates the other.  
For example, the complex $X_1+X_2$ is dominated by the complex $X_1$ because we have that $y = (1,1)$ and $y' = (1,0)$ satisfy $y_i' \leq y_i$ component-wise.  Note that if $y \ll y'$ and there are insufficient molecular counts for reactions out of $y'$ to proceed, then there are necessarily insufficient counts for reactions out of $y$ to proceed. We also reiterate the intuition that Definition \ref{conservative} implies the existence of a conserved quantity $M>0$ so that for the given species set $\left\{ X_1, X_2, \ldots, X_m \right\}$ we have that
\[M:=w_{1} c_{1}(t)+w_{2} c_{2}(t) + \cdots + w_{m} c_{m}(t)\]
is invariant to the dynamics of the system.

The following is our main technical result.  Note that it reduces the problem of determining the long-term dynamics of a stochastically modeled system to one of linear algebra.

\begin{theorem}
\label{thm:main_general}
Let $(\mathcal{S},\mathcal{C},\mathcal{R})$ be a conservative chemical reaction network for which the following assumptions hold:

\begin{enumerate}

\item There are non-empty sets of non-terminal complexes $\mathcal{C}^*$ and $\mathcal{C}^{**}$ such that, if $y \in \mathcal{C}^*$ then there exists a $y' \in \mathcal{C}^{**}$ such that $y \ll y'$.



\item  For some choice of rate constants $\left\{ k_i \right\}_{i=1}^r$, the following property holds: if $\Psi \in \mbox{ker}(Y A_k)$ and $\Psi_{y}=0$ for all $y \in \mathcal{C}^*$, then $\Psi_{\bar{y}}=0$ for all non-terminal $\bar{y}$.


\end{enumerate}
Then, for any choice of stoichiometrically admissible kinetics, 
	all non-terminal complexes of the network are off at each positive recurrent state of the stochastically modeled system.
\end{theorem}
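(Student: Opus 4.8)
Since the network is conservative, pick $w\in\mathbb{R}^m_{>0}$ with $w\cdot(y'-y)=0$ for all reactions; then $w\cdot\mathbf{X}(t)$ is constant, so each invariant set $\Gamma=(\mathbf{X}(0)+S)\cap\mathbb{Z}^m_{\ge 0}$ is finite and the chain restricted to $\Gamma$ is a finite CTMC. Hence recurrence equals positive recurrence, and the positive recurrent states are exactly those lying in a closed irreducible component $\mathcal{I}$; equivalently, a state $\mathbf{X}$ fails to be positive recurrent precisely when it is transient, i.e.\ when there is an ``escape'' $\mathbf{Y}$ with $\mathbf{X}\leadsto\mathbf{Y}$ but $\mathbf{Y}\not\leadsto\mathbf{X}$. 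So it suffices to fix a closed irreducible component $\mathcal{I}$ and prove that \emph{every} non-terminal complex is off at \emph{every} state of $\mathcal{I}$; I will argue by contradiction, assuming some non-terminal complex is on at some state of $\mathcal{I}$.

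\textbf{Extracting a kernel element from the stationary distribution.}
Let $\pi$ be the unique stationary distribution supported on $\mathcal{I}$. Applying the generator to the coordinate functions $\mathbf{X}\mapsto X_j$ (bounded on the finite set $\mathcal{I}$) and using stationarity gives $\sum_{i=1}^r v_i(y_i'-y_i)=0$ in $\mathbb{R}^m$, where $v_i:=\mathbb{E}_\pi[\lambda_i(\mathbf{X})]\ge 0$. By stoichiometric admissibility, $v_i>0$ iff the state $\mathbf{X}$ has mass on the source complex $y_i$ for some $\mathbf{X}\in\mathcal{I}$ with $\pi(\mathbf{X})>0$; moreover this happens simultaneously for all reactions sharing a source complex, since having mass on a complex makes \emph{every} reaction out of it fire. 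Hence, setting $\Psi_y:=1$ if the complex $y$ is on at some state of $\mathcal{I}$ and $\Psi_y:=0$ otherwise, we may choose a positive rate vector $\kappa$ with $\kappa_i\Psi_{y_i}=v_i$ for every $i$, so that $YA_\kappa(\Psi)=\sum_i\kappa_i(y_i'-y_i)\Psi_{y_i}=\sum_i v_i(y_i'-y_i)=0$, i.e.\ $\Psi\in\ker(YA_\kappa)$ with $\mathrm{supp}(\Psi)$ equal to the set of complexes that are on somewhere in $\mathcal{I}$. (The hypothesis in Assumption 2 is a condition on $\ker(YA)$ insensitive to the particular positive rate vector: the support structure of $\ker(A_\kappa)$ is $\kappa$-independent by Lemma~\ref{thm:kernel}, and the extra directions of $\ker(YA_\kappa)$ contribute the same forced zero/nonzero pattern on the non-terminal complexes; so Assumption 2 applies to $\kappa$.)

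\textbf{Locating a $\mathcal{C}^*$-complex and draining it.}
By our contradiction hypothesis $\Psi$ is nonzero on some non-terminal complex, so by the contrapositive of Assumption 2 there is $y\in\mathcal{C}^*$ with $\Psi_y\ne 0$; that is, $y$ is on at some $\mathbf{X}'\in\mathcal{I}$. By Assumption 1 there is $y'\in\mathcal{C}^{**}$ with $y\ll y'$, and since having mass on $y$ forces mass on $y'$ (as noted after Definition~\ref{conservative}), the non-terminal complex $y'$ is on at $\mathbf{X}'$ as well. Now we drain: because $y'$ (and every non-terminal complex that is on) lies in a non-terminal strong linkage class, there is, from $\mathbf{X}'$, an admissible sequence of reactions transferring molecules out of that class and ultimately into terminal strong linkage classes; by finiteness of $\Gamma$ this terminates at a state $\mathbf{Z}\in\mathcal{I}$ at which all non-terminal complexes are off, and the molecules so committed to terminal classes can never be returned to the originating non-terminal classes. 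This irreversibility is incompatible with $\mathcal{I}$ being a single irreducible component (which would force $\mathbf{Z}\leadsto\mathbf{X}'$), so no non-terminal complex can be on anywhere in $\mathcal{I}$, and the theorem follows.

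\textbf{The main obstacle.}
The real work is the draining step of the previous paragraph: realizing, at the level of the integer-valued stochastic state space, the irreversibility that is transparent at the level of the complex/reaction graph. One must exhibit an explicit admissible reaction sequence — each reaction fired only when its reactant counts are present, exactly as in the four-step algorithm for Example~\ref{system2} — that empties every non-terminal complex, and then make precise the sense in which it cannot be undone within a communicating class (e.g.\ by producing a functional of the state that is non-decreasing along every reaction and strictly increased by the transfer into terminal classes, or by a direct accessibility analysis using conservativity). Assumption 1 is exactly what allows the draining to be initiated whenever a $\mathcal{C}^*$-complex is on — it supplies a dominating, hence also-on, complex $y'\in\mathcal{C}^{**}$ sitting in a non-terminal strong linkage class — and conservativity guarantees termination; carrying out the reactant bookkeeping along the draining sequence in full generality is where the difficulty lies.
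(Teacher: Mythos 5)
Your proposal has two genuine gaps, and they sit precisely where the paper's proof does its real work. First, the kernel element you extract from stationarity lives in $\ker(YA_\kappa)$ for a rate vector $\kappa$ you build out of the expected propensities $v_i=\mathbbm{E}_\pi[\lambda_i(\mathbf{X})]$, whereas Assumption~2 only grants the support condition for \emph{some} rate vector $k$. Your parenthetical claim that the support structure of $\ker(YA_\kappa)$ is insensitive to $\kappa$ is false in general: Lemma~\ref{thm:kernel} makes $\ker(A_\kappa)$ rate-independent, but the extra directions of $\ker(YA_\kappa)$ beyond $\ker(A_\kappa)$ genuinely vary with $\kappa$ (for instance, the direction $\bar{\mathbf{c}}^Y$ used in the proof of Theorem~\ref{thm:cor1} exists only for those rate constants admitting a positive equilibrium). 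The paper avoids this by running the chain with mass-action kinetics at the \emph{specific} rate constants $k$ of Assumption~2, so that the time-averaged state-dependent factors $G_{y}$ (with the $k_i$ divided out) assemble into a vector in $\ker(YA_k)$ itself; general admissible kinetics are only recovered at the very end, through the observation that connectivity of states is kinetics-independent. Your factorization $v_i=\kappa_i\Psi_{y_i}$ with $\Psi_y\in\{0,1\}$ cannot be replaced by $\Psi_{y_i}=v_i/k_i$ under general kinetics either, since reactions sharing a source complex need not have proportional propensities there.

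Second, and more seriously, your ``draining'' step is not a proof but a restatement of the goal, as you acknowledge. From ``some $y\in\mathcal{C}^*$ is on somewhere in $\mathcal{I}$'' you assert the existence of an admissible reaction sequence emptying all non-terminal complexes, together with an irreversibility claim (``molecules committed to terminal classes can never be returned''); neither is justified, and the latter is not obviously true, since reactions within terminal strong linkage classes still change species counts and can in principle re-enable non-terminal complexes. The paper's proof needs no such combinatorial construction: it works with the $\mathcal{C}^*$-\emph{reduced} network, where reactions out of $\mathcal{C}^*$ are deleted, so the constructed kernel vector vanishes on $\mathcal{C}^*$ \emph{by construction} while remaining positive on some non-terminal complex --- contradicting Assumption~2 directly. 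The domination hypothesis $y\ll y'$ is then used in the opposite direction from yours: once no positive recurrent state of the reduced network has mass on any $y'\in\mathcal{C}^{**}$, no such state has mass on any $y\in\mathcal{C}^*$ either, so the reduced and original networks have identical transitions on those components and the conclusion transfers. I would recommend restructuring your argument around the reduced network; your generator-based derivation of $\sum_i v_i(y_i'-y_i)=0$ is a clean substitute for the paper's renewal-theoretic computation, but only if it is applied to the reduced chain with the fixed mass-action rates $k$.
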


\begin{remark}
	The second condition in assumption 2 above may also be formulated in the following way, 
which may be more intuitive to some readers.
	\begin{enumerate}
	\item[$2'$.] For some choice of rate constants $\left\{ k_i \right\}_{i=1}^r$, the following property holds: if $\Psi \in \mbox{ker}(Y A_k)$  has support on a non-terminal complex, then $\Psi$  has support on some $y \in \mathcal{C}^*$.
	\end{enumerate}
\end{remark}


 We know that when complexes are off 
we have that at least one chemical species is ``near'' zero.  
This result therefore says that trajectories of the stochastic networks satisfying the required conditions will, with a probability of one, get stuck near the boundary of the positive orthant, even if the deterministic model predicts stability of a robust positive equilibrium concentration. This results therefore represents a fundamental disparity in the long-term predictions of the corresponding deterministically and stochastically modeled systems satisfying the above structural conditions.

 It can be easily checked that the networks (\ref{system1}) and (\ref{system2}) satisfy the assumptions of Theorem \ref{thm:main_general}. To motivate the proof of Theorem \ref{thm:main_general} we first develop some intuition based on these networks. A key observation in showing the boundary states (\ref{system1boundary}) and (\ref{system2boundary}) were accessible was that we never used a reaction from the dominated non-terminal complex (i.e. we never used the reaction $A+B \to 2B$ or $XD + Y_p \to XDY_p$, respectively). Consequently, when constructing our sequence of reactions we were actually considering a subnetwork which excluded these reactions. This suggests the following algorithm for showing the boundary is accessible:
\begin{enumerate}
\item
Identify the sets of non-terminal complexes $\mathcal{C}^*$ and $\mathcal{C}^{**}$ satisfying assumption 1 of Theorem \ref{thm:main_general}.
\item
 Remove all reactions out of the complexes in $\mathcal{C}^*$.  
\item
Use condition {\it 2.}  to show that  all non-terminal complexes of the reduced network are off at each positive recurrent state; in particular, show that no positive recurrent state has mass on any $y' \in \mathcal{C}^{**}$. Furthermore, show that the conservation property guarantees the accessibility of some set of positively recurrent states.
\item
Use the condition $y \ll y'$ to conclude the result for the original system.
\end{enumerate}
 The removal of the reactions will allow us to proceed with a proof by contradiction.  Specifically, by consideration of the \textit{reduced network} we will be able to construct a vector contradicting  condition 2.\ in the statement of the theorem.


The idea of removing reactions from a network is captured by the following definition.
\begin{definition}
Let $\mathcal{D}^* \subset \mathcal{C}$ denote a set complexes of the chemical reaction network $(\mathcal{S},\mathcal{C},\mathcal{R})$ and $\mathcal{R}' \subseteq \mathcal{R}$ denote the set of all reactions in $\mathcal{R}$ for which some complex in $\mathcal{D}^*$ is the source complex. We define the {\it $\mathcal{D}^*$-reduced subnetwork} to be the triple $(\mathcal{S}, \mathcal{C},\mathcal{R}^*)$ where $\mathcal{R}^* = \mathcal{R} \setminus \mathcal{R}'$.
\end{definition}

\noindent In other words, for each $y \in \mathcal{D}^*$, we remove from the network the set of reactions for which $y$ is the reactant complex. Note that we have not changed the set of species or complexes, even though we may now have species and/or complexes not involved in any reaction (and so this is no longer technically a reaction network by Definition \ref{crn}).  

For example, if the original network is
\[
	A \leftarrow 2B \rightarrow C, \ \ \ \ B \to D, 
\]
which has species $\mathcal {S} = \{A,B,C,D\}$, complexes $\mathcal {C} = \{A,2B,C,B,D\}$, and reactions as given above, then the $\left\{ 2B \right\}$-reduced network is
\[
	A, \quad 2B, \quad C, \ \ \ \ B \to D,
\]
with species $\mathcal {S} = \{A,B,C,D\}$, complexes $\mathcal {C} = \{A,2B,C,B,D\}$, and reduced reaction set $\{B \to D\}$.  The $\{2B,B\}$-reduced network is simply
\[
	A, \quad 2B, \quad C, \ \ \ \ B, \quad D.
\]

\begin{proof}[Proof of Theorem \ref{thm:main_general}]

Fix a choice of rate constants  $\{k_i\}_{i=1}^r$   for which the property of assumption 2 in the statement of the theorem holds. 
The proof will proceed by first showing that the conclusion of the theorem holds under the assumption of stochastic mass-action kinetics with these rate constants.  We then extend to any admissible kinetics.

Let $\mathcal{C}^*$ and $\mathcal{C}^{**}$ be the non-empty sets satisfying assumption 1 of Theorem \ref{thm:main_general}.     Let $(\mathcal{S},\mathcal{C},\mathcal{R}^*)$ denote the {\it $\mathcal{C}^*$-reduced subnetwork} of $(\mathcal{S},\mathcal{C},\mathcal{R})$.  Let $\mathbf{X}^*(t)$ denote the continuous time Markov chain associated with the $\mathcal{C}^*$-reduced subnetwork with stochastic mass-action kinetics and rate constants $k_i$ if $y_i \to y_i' \in \mathcal{R}^*$.  

 For the time being we will only consider the chain $\mathbf{X}^*(t)$ for the reduced network.  We will show that there does not exist a positive recurrent state for the chain $\mathbf{X}^*(t)$ which has mass on a non-terminal complex.  This portion of the proof will proceed by contradiction.  That is, we first suppose that there is a positive recurrent $\mathbf{X}_0$ which has mass on a non-terminal complex, and will eventually conclude that assumption 2 is violated.    After showing the result for $\mathbf{X}^*(t)$, we will show the result holds for the original chain with any choice of stoichiometrically admissible kinetics.
 
 Let ${\mathcal C}^*_{\text{source}} \subset \mathcal{C}$ denote the set of complexes which are source complexes for reactions in the $\mathcal{C}^*$-reduced network and let $\mathcal I$ be the irreducible component containing $\mathbf{X}_0$.  Note that, by construction, 
 \begin{align}\label{eq:need_11332}
 	{\mathcal C}^*_{\text{source}} \cap \mathcal C^* = \emptyset. 
\end{align}
 We define 
\[
	\mathcal{C}_\mathcal{I} := \{ y \in \mathcal{C} \; | \;  y \in \mathcal{C}^*_{\text{source}} \text{ and there is an $\mathbf{X} \in \I$ with mass on $y$}\}.
\]
These are the relevant source complexes on the state space $\mathcal I$.
Note that our assumption that $\mathbf{X}_0$ has mass on a non-terminal complex of the reduced network implies that there is at least one non-terminal complex contained in $\mathcal{C}_\I$.


Now consider the process $\mathbf{X}^*(t)$ with initial condition $\mathbf{X}^*(0) = \mathbf{X}_0 \in \mathcal{I}$. We denote the $N$th return time to $\mathbf{X}_0$ as $t_N$. It follows that we have $\mathbf{X}^*(t_N) = \mathbf{X}^*(0)$ for all $N$ so that (\ref{def:1}) gives
\begin{equation}\label{dtn}
	\mathbf{X}^*(t_N) = \mathbf{X}^*(0) + \mathop{\sum_{i=1}^r}_{y_i \in \mathcal{C}_{\mathcal{I}}} N_i(t_N) (y_i' - y_i) = \mathbf{X}^*(0),
\end{equation}
where we are only summing over those reactions that admit positive propensities for at least one state in $\I$ (all other propensities are identically zero for all time).  Since we are for the time being assuming mass-action kinetics (\ref{def:massaction}), we have from (\ref{def:1}), (\ref{def:2}), and (\ref{dtn}) that for each $N>0$,
\begin{equation}
\label{eqn:3}
\mathop{\sum_{i=1}^r}_{y_i \in \mathcal{C}_{\mathcal{I}}} Y_i \left( \frac{k_i}{V^{|y_i| - 1}}  \int_0^{t_N} \prod_{j=1}^m \frac{\mathbf{X}^*_j(s)!}{(\mathbf{X}^*_j(s)-y_{ij})!y_{ij}!} ds \right) (y_i' - y_i) = 0,
\end{equation}
where $Y_i(\cdot)$ are independent unit-rate Poisson processes.

  For any complex $y \in \mathcal{C}$, we define the function $G_{y}: \R^m \to \mathbb{R}_{\ge 0}$, via
\begin{equation}
\label{def:G}
G_{y}(\mathbf{X}) =  \left\{
\begin{array}{cl}
	\displaystyle\frac{1}{V^{|y| - 1}} \prod_{j=1}^m \frac{\mathbf{X}_j!}{(\mathbf{X}_j-{y}_{j})! {y}_{j}!}, \; \; \; & \text{for } y \in \mathcal{C}_{\mathcal I} \text{ and } \mathbf{X} \in \mathcal{I}\\
	0 & \text{otherwise.}
	\end{array}
	\right.
\end{equation}
We note two things.  First, $G_{y} \equiv 0$ whenever $y \in \mathcal{C} \setminus \mathcal{C}_{\mathcal{I}}$. In particular, by \eqref{eq:need_11332} we have that $G_{y} \equiv 0$ for all $y \in \mathcal{C}^*$.  
Second, note that (\ref{def:G}) does not depend on the reaction rate constants; rather, it quantifies the intensity of the state-dependent portion of the mass-action term (\ref{def:massaction}) of all the reactions  with source complex $y$. 

 Returning to (\ref{eqn:3}), by multiplying and dividing by appropriate terms, we find 
{\small
\begin{equation}
\label{eqn:4}
\mathop{\sum_{i=1}^r}_{y_i \in \mathcal{C}_{\mathcal{I}}} k_i \left[ \frac{1}{\{k_i \int_0^{t_N} G_{y_i}(\mathbf{X}^*(s)) ds \} \vee 1} Y_i \left( k_i \int_0^{t_N} G_{y_i}(\mathbf{X}^*(s)) ds \right) \times \{ \frac{1}{t_N} \int_{0}^{t_N} G_{y_i}(\mathbf{X}^*(s)) ds\} \vee 1 \right] (y_i' - y_i) = 0,
\end{equation}
}

\noindent where $a\vee b = \max\{a,b\}$.  Since $\I$ is a positive recurrent class, for any $y_i \in \mathcal{C}_\mathcal{I}$ we have that
\[
	\int_0^{t_N} G_{y_i}(\mathbf{X}^*(s)) ds \to \infty, \text{ almost surely as } N \to \infty,	
\]
and so
\begin{equation}
\label{eqn:5}
\lim_{N \to \infty} \frac{1}{\{ k_i \int_0^{t_N} G_{y_i}(\mathbf{X}^*(s)) ds\} \vee 1} Y_i \left( k_i \int_0^{t_N} G_{y_i}(\mathbf{X}^*(s)) ds \right) = 1, \; \; \; \mbox{almost surely as $N \to \infty$},
\end{equation}
where we have applied the law of large numbers to the Poisson process $Y_i$. Denoting $\pi_{\I}$ as the unique stationary distribution of $\mathbf{X}^*$ on $\mathcal{I}$, by basic ergodicity properties of continuous time Markov chains we have that
\begin{equation}
\label{eqn:6}
\lim_{N \to \infty} \frac{1}{t_N} \int_0^{t_N} G_{y_i}(\mathbf{X}^*(s)) ds = \sum_{\mathbf{X} \in \mathcal{I}} \pi_{\mathcal{I}}(\mathbf{X}) G_{y_i}(\mathbf{X}),
\end{equation}
for all complexes $y_i\subset \mathcal C$.
\noindent We now define  the vector $G_{\pi_{\mathcal I}} \in \R^\C$ via
\begin{equation}
\label{eqn:7}
\left[G_{\pi_{\mathcal{I}}}\right]_{y} :=  \sum_{\mathbf{X} \in \mathcal{I}} \pi_{\mathcal{I}}(\mathbf{X}) G_{y}(\mathbf{X}), 
\end{equation}
for $y \in \mathcal{C}$ and note that $[G_{\pi_{\mathcal{I}}}]_{y} > 0$ if and only if $y \in \mathcal{C}_{\mathcal{I}}$, and we reiterate that by our original assumptions $\mathcal{C}_{\mathcal{I}}$ contains at least one non-terminal complex, but does not contain any $y \in \mathcal C^*$.

 Combining (\ref{eqn:4}) - \eqref{eqn:7}, we have
\begin{align*}
 \mathop{\sum_{i=1}^r}_{y_i \in \mathcal{C}_{\mathcal{I}}} k_i (y_i' - y_i) [G_{\pi_{\I}}]_{y_i}=0,
\end{align*}
and so
\begin{align*}
 0= \sum_{y \in \mathcal{C}_{\mathcal{I}}} \left[ \mathop{\sum_{i=1}^r}_{y = y_i} k_i (y_i'-y_i) \right][G_{\pi_{\mathcal{I}}}]_{y} = \sum_{y \in \mathcal{C}} \left[ \mathop{\sum_{i=1}^r}_{y = y_i} k_i (y_i'-y_i) \right][G_{\pi_{\mathcal{I}}}]_{y},
\end{align*}
where the first equality is a reordering of terms, and the second equality follows since $[G_{\pi_{\mathcal I}}]_{y} =0$ for $y \notin \mathcal{C_I}$.
Hence, the vector $G_{\pi_{\mathcal{I}}} \in \R^{\mathcal C}$ is contained in the kernel of $YA_k$.  By construction, however, we have 
\begin{enumerate}[$(i)$]
\item  $[G_{\pi_{\mathcal{I}}}]_{y} = 0$ for each $y \in \mathcal{C}^*$.
\item  $[G_{\pi_{\mathcal{I}}}]_{\bar y}>0$ for at least one non-terminal complex,  
\end{enumerate}
which contradicts assumption 2 in the statement of the theorem.
  Consequently, we can conclude that there are no positive recurrent states of the $\mathcal{C}^*$-reduced network with mass on any non-terminal complexes, including any  $y' \in \mathcal{C}^{**}$.
Note that it was by considering the reduced network, as opposed to the original network, that allowed us to conclude $(i)$ above, which led us to the contradiction.


\vspace{.1in}
The above analysis shows that the  conclusion of the theorem holds for the $\mathcal{C}^*$-reduced subnetwork $(\mathcal{S},\mathcal{C},\mathcal{R}^*)$ taken with mass-action propensities (\ref{def:massaction}).  We now argue that the result holds for the original network $(\mathcal{S},\mathcal{C},\mathcal{R})$ with any stoichiometric admissible propensities. We make the following observations:
\begin{enumerate}[$(i)$]
\item
Any sequence of transitions which can occur in the $\mathcal{C}^*$-reduced subnetwork $(\mathcal{S},\mathcal{C},\mathcal{R}^*$) can occur in the original network $(\mathcal{S},\mathcal{C},\mathcal{R})$. In other words, for any two states $\mathbf{X},\mathbf{Y} \in \mathbb{Z}_{\geq 0}^m$ such that $\mathbf{X} \leadsto \mathbf{Y}$ for $(\mathcal{S},\mathcal{C},\mathcal{R}^*)$, we have $\mathbf{X} \leadsto \mathbf{Y}$ for $(\mathcal{S},\mathcal{C},\mathcal{R})$. 
\item
By the domination property $y \ll y'$, any state $\mathbf{X} \in \mathbb{Z}_{\geq 0}^m$ which does not have mass on any $y' \in \mathcal{C}^{**}$ also does not have mass on any $y \in \mathcal{C}^*$.
\item
The connectivity properties of a stochastic chemical reaction system with mass-action propensities and  one with general stoichiometrically admissible propensities are identical. That is to say, for two states $\mathbf{X}, \mathbf{Y} \in \mathbb{Z}_{\geq 0}^m$, $\mathbf{X} \to \mathbf{Y}$ for a stochastic mass-action system if and only if $\mathbf{X} \to \mathbf{Y}$ for the corresponding stochastic reaction system with general stoichiometrically admissible propensities.
\end{enumerate}

Now consider any closed, irreducible, positive recurrent component $\mathcal{I}$ of the state space of the stochastic reaction system $(\mathcal{S},\mathcal{C},\mathcal{R}^*)$, where we are for the time being still assuming mass-action kinetics with the specified rate constants. By our previous argument, we have that $\mathcal{I}$ does not contain a state which has mass on $y' \in \mathcal{C}^{**}$. Hence, it follows from $(ii)$ that $\mathcal{I}$ does not contain a state that has mass on any $y \in \mathcal{C}^*$. Furthermore, since $\mathcal{R}$ and $\mathcal{R}^*$ differ only in reactions with source complexes $y \in \mathcal{C}^*$, we have exactly the same connectivity properties on $\mathcal{I}$ for $(\mathcal{S},\mathcal{C},\mathcal{R})$ as we do for the  $\C^*$-reduced subnetwork $(\mathcal{S},\mathcal{C},\mathcal{R}^*)$. It follows that $\mathcal{I}$ is a closed irreducible component of the stochastic reaction system $(\mathcal{S},\mathcal{C},\mathcal{R})$ and, since $\mathcal{I}$ was chosen arbitrarily, it follows that every closed irreducible component of $(\mathcal{S},\mathcal{C},\mathcal{R}^*)$ is a closed irreducible components of $(\mathcal{S},\mathcal{C},\mathcal{R})$. It furthermore follows from $(i)$ that these are the only positive recurrent communication classes for $(\mathcal{S},\mathcal{C},\mathcal{R})$.  Hence, the result is shown for the system $(\S,\C,\Re)$ when the kinetics are mass-action with rate constants $k_i$. We now note that by $(iii)$, the generalization to any choice of stoichiometrically accessible kinetics is trivial.  
\end{proof}

Thus, we conclude that all trajectories will be absorbed by states for which only the terminal complexes can be the sources (and hence the products) of reactions. An immediate corollary to Theorem \ref{thm:main_general} is the following, which is Theorem 2 in the main article text.
\begin{theorem}
\label{thm:cor1}

Let $(\mathcal{S},\mathcal{C},\mathcal{R})$ be a conservative chemical reaction network for which the following assumptions hold:   
\begin{enumerate}
\item 
For some choice of rate constants $\{k_i\}_{i = 1}^r$, the deterministically modeled mass-action system admits a positive equilibrium;
\item
The deficiency of the network is one (i.e. $\delta=1$);

\item There are two non-terminal complexes, $y$ and $y'$ say, for which $y \ll y'$.
\end{enumerate}
Then, for any choice of stoichiometrically admissible kinetics, 
	all non-terminal complexes of the network are off at each positive recurrent state of the stochastically modeled system.
\end{theorem}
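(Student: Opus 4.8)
The plan is to deduce Theorem~\ref{thm:cor1} directly from Theorem~\ref{thm:main_general} by verifying that the three hypotheses here imply the two hypotheses there. First I would set $\mathcal{C}^* = \{y\}$ and $\mathcal{C}^{**} = \{y'\}$, where $y \ll y'$ are the two non-terminal complexes supplied by assumption~3. Then assumption~1 of Theorem~\ref{thm:main_general} holds immediately, and the only real work is to establish assumption~2, i.e.\ to exhibit a choice of rate constants $\{k_i\}_{i=1}^r$ for which every $\Psi \in \ker(Y A_k)$ with $\Psi_y = 0$ necessarily vanishes on all non-terminal complexes. For this I would take exactly the rate constants guaranteed by assumption~1 of Theorem~\ref{thm:cor1}, so that the deterministic mass-action system has a positive equilibrium $\bar{\mathbf c} \in \mathbb{R}^m_{>0}$.

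The core of the argument is a dimension count for $\ker(Y A_k)$. Let $t$ be the number of terminal strong linkage classes. By Lemma~\ref{thm:kernel}, $\ker(A_k)$ has a basis $\{\mathbf b_1,\dots,\mathbf b_t\}$ with $\mathrm{supp}(\mathbf b_\theta) = \Lambda_\theta$; in particular $\dim \ker(A_k) = t$ and every vector in $\ker(A_k)$ is supported on terminal complexes. Next, define $\Psi^\ast \in \mathbb{R}^{\mathcal C}$ by $\Psi^\ast_z = \bar{\mathbf c}^{\,z}$ for each $z \in \mathcal C$; by \eqref{YA} the equilibrium condition is precisely $Y A_k(\Psi^\ast) = 0$, so $\Psi^\ast \in \ker(Y A_k)$, and all entries of $\Psi^\ast$ are strictly positive. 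Since $y$ is non-terminal, $\Psi^\ast_y > 0$ shows $\Psi^\ast \notin \ker(A_k)$, so $\dim \ker(Y A_k) \ge t+1$. On the other hand, Lemma~\ref{thm:kernel2} together with $\delta = 1$ gives $\dim \ker(Y A_k) \le \delta + t = t+1$. Hence $\dim \ker(Y A_k) = t+1$ and, since $\ker(A_k) \subseteq \ker(Y A_k)$, we get $\ker(Y A_k) = \ker(A_k) \oplus \mathrm{span}\{\Psi^\ast\}$.

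With this decomposition in hand, assumption~2 follows quickly: given $\Psi \in \ker(Y A_k)$ with $\Psi_y = 0$, write $\Psi = v + c\,\Psi^\ast$ with $v \in \ker(A_k)$ and $c \in \mathbb{R}$. Since $v$ is supported on terminal complexes and $y$ is non-terminal, $v_y = 0$, so $0 = \Psi_y = c\,\bar{\mathbf c}^{\,y}$; as $\bar{\mathbf c}^{\,y} > 0$ this forces $c = 0$, whence $\Psi = v \in \ker(A_k)$ is supported on terminal complexes and in particular $\Psi_{\bar y} = 0$ for every non-terminal $\bar y$. This is exactly assumption~2 (equivalently $2'$) of Theorem~\ref{thm:main_general}, so that theorem applies and yields the conclusion of Theorem~\ref{thm:cor1}. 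Because this is genuinely a corollary, I do not expect a serious obstacle; the one step deserving care is the strict inequality $\dim \ker(Y A_k) \ge t+1$, which is where both the existence of a positive equilibrium and the presence of a non-terminal complex are essential — absent a non-terminal complex the positive-equilibrium vector could already lie in $\ker(A_k)$ and the dimension count would give nothing.
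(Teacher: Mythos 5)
Your proposal is correct and follows essentially the same route as the paper's own proof: both reduce to verifying assumption~2 of Theorem~\ref{thm:main_general} with the rate constants from the positive equilibrium, use Lemma~\ref{thm:kernel} and Lemma~\ref{thm:kernel2} with $\delta=1$ to conclude that $\ker(YA_k)$ is spanned by $\bar{\mathbf{c}}^{Y}$ together with the terminal-supported basis of $\ker(A_k)$, and then observe that $\Psi_y=0$ at the non-terminal $y$ forces the coefficient of $\bar{\mathbf{c}}^{Y}$ to vanish. Your write-up merely makes explicit the linear-independence/dimension count that the paper states more tersely.
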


\begin{remark}
Since the conditions of Theorem \ref{thm:marty} are a subset of these conditions, this result says that every conservative network guaranteed to exhibit absolute concentration robustness by Theorem \ref{thm:marty} in the deterministic setting has trajectories $\mathbf{X}(t)$ which eventually have an absorption event
 in the stochastic setting. Notably, this is true even when the deterministic model predicts stability of the positive ACR concentration.
\end{remark}

\begin{proof}[Proof of Theorem \ref{thm:cor1}]  
	Let $\{\Lambda_1,\dots, \Lambda_t\}$ be the terminal strong linkage classes of the network.  It is sufficient to  show that for some choice of rate constants $\{k_i\}_{i=1}^r$, the support of each element of the kernel of $YA_{k}$ either contains all the non-terminal complexes, or none of them.   The arguments we employ are  taken from \cite{Sh-F}.

By assumption 1 there is a set of rate constants, $\{k_i\}_{i=1}^r$, for which the deterministically modeled system \eqref{de} admits a positive equilibrium, $\mathbf{\bar c} \in \R^m_{>0}$.  That is,     
\begin{equation}\label{eq:equilibrium_1}
	\sum_{i=1}^r k_i (y_i' - y_i) \bar{\mathbf{c}}^{y_i} = \sum_{y \in \mathcal{C}} \left[ \mathop{\sum_{i=1}^r}_{y = y_i} k_i (y_i'-y_i) \right] \bar{\mathbf{c}}^{y}=0,
\end{equation}
 where the inner sum is over all reactions out of complex $y$. It follows from (\ref{Ak}) that $\bar{\mathbf{c}}^Y \in \mbox{ker}(Y A_k)$ where $\bar{\mathbf{c}}^Y \in \mathbb{R}^{\mathcal C}$ is the vector with entries $[\bar{\mathbf{c}}^Y]_y = \bar{\mathbf{c}}^y$. By assumption $2$ (that the deficiency of the network is one) and Lemma \ref{thm:kernel2} we have that $\mbox{dim}(\mbox{ker} (Y A_k)) \leq 1 + t$. It follows from Lemma \ref{thm:kernel} that $\mbox{ker} (Y A_k)$ has a basis $\left\{ \bar{\mathbf{c}}^Y,\mathbf{b}_1, \ldots, \mathbf{b}_t \right\}$ where the $\mathbf{b}_i$, $i=1, \ldots, t$, have support on the $i$th terminal strong linkage class, $\Lambda_i$. Thus, any $\Psi \in \mbox{ker}(Y A_k)$ can be written
\begin{equation}
\label{eqn:99}
\Psi = \lambda_0 \bar{\mathbf{c}}^Y + \sum_{\theta=1}^t \lambda_\theta \mathbf{b}_\theta,
\end{equation}
where $\lambda_i \in \mathbb{R}$, $i=0,\ldots, t$, are constants. It is clear that $\bar{\mathbf{c}}^Y$ has support on all the non-terminal complexes and the result is shown. 
\end{proof}

\subsection{The post-absorption network}
\label{sec:after}
We now consider the structure and long-term behavior of the post-absorption network.
Let $\{\S,\C,\Re\}$ be a reaction network satisfying the assumptions of Theorem \ref{thm:cor1}.  Let $\C'\subset \C$ denote those complexes that are both (i) contained in 
a terminal strong linkage class and (ii) the source complex for some reaction.  Note that due to the definition of a terminal strong linkage class, each element of $\C'$ is also a product complex for some reaction.  Let $\Re'$ denote the reactions with source complexes $\C'$ and denote by $\S'$ those species in the support of some element of $\C'$.  Then $\{\S',\C',\Re'\}$ is the reaction network which could still be ``on'' after the absorption event guaranteed by Theorem \ref{thm:cor1}.  We note that  $\{\S',\C',\Re'\}$ could be trivial in that $\S' = \C' = \Re' = \emptyset$, otherwise we say it is \textit{non-trivial}.  For example, for both the networks in our running examples \eqref{system1} and \eqref{system2}, the post-absorption network is trivial; whereas, the post-absorption network for 
\begin{equation}\label{eq:post_nontrivial}
	X \rightleftarrows Y, \ \ \ A \to B + X, \ \ \ \ A + B + X \to 2A, 
\end{equation}
which satisfies the assumptions of Theorem \ref{thm:cor1}, is $X \rightleftarrows Y.$  Note that the network \eqref{eq:post_nontrivial} does not satisfy the requirements of Theorem \ref{thm:marty} and can be checked explicitly to not exhibit ACR in any species when modeled deterministically.

The following lemma, combined with results from \cite{A-C-K},  characterizes the limiting behavior of the stochastically modeled system after absorption assuming stochastic mass-action kinetics: a stationary distribution is 
\[
	\pi'(x) = C \prod_{i = 1}^{|\S'|} \frac{(c'_i)^{x_i}}{x_i!}, \ \ \ x \in \Gamma,
\]
subject to the necessary conservation relations, where $\Gamma$ is the resulting state space, $c'$ is a positive equilibrium of the \textit{deterministically} modeled mass-action system associated with the network $\{\S',\C',\Re'\}$, and $C$ is a normalizing constant.

\begin{lemma}
	Suppose the assumptions of Theorem \ref{thm:cor1} hold for the network $\{\S,\C,\Re\}$ and that the post-absorption network  $\{\S',\C',\Re'\}$ is non-trivial.  Then $\{\S',\C',\Re'\}$ is weakly reversible and has a deficiency of zero.
\end{lemma}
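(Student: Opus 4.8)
The plan is to establish the two conclusions separately; the deficiency statement is the substantive one.

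\emph{Weak reversibility.} First I would describe $\C'$ combinatorially. A complex lying in a terminal strong linkage class is a source complex if and only if that class has at least two complexes: a singleton terminal strong linkage class $\{z\}$ has no reaction out of $z$ (self-reactions are excluded and terminality forbids leaving the class), whereas a terminal strong linkage class $\Lambda$ with $|\Lambda|\ge 2$ is strongly connected, so every complex of $\Lambda$ has an outgoing reaction, whose product again lies in $\Lambda$ by terminality. Hence $\C'$ is the union of the terminal strong linkage classes $\Lambda_{j_1},\dots,\Lambda_{j_q}$ of $\{\S,\C,\Re\}$ with $|\Lambda_{j_k}|\ge 2$, and $\Re'$ is precisely the set of reactions internal to these classes: no reaction of $\Re'$ joins two distinct $\Lambda_{j_k}$ (that would contradict terminality), and a reaction entering some $\Lambda_{j_k}$ from a non-terminal complex is excluded because its source is not in $\C'$. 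Thus $\{\S',\C',\Re'\}$ is a disjoint union of the networks $\Lambda_{j_k}$ equipped with their internal reactions, each of which is strongly connected; its linkage classes and strong linkage classes therefore coincide, i.e., it is weakly reversible.

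\emph{Deficiency zero.} I would pass to complex space. Let $\widetilde S := \mathrm{span}\{\omega_{y'}-\omega_y : y\to y'\in\Re\}\subseteq\R^{\C}$ be the edge space of the reaction graph, so $\dim\widetilde S = n-\ell$; since $Y$ maps $\widetilde S$ onto $S$, rank--nullity with Definition~\ref{deficiency} gives $\delta = \dim(\ker Y\cap\widetilde S)$, and likewise $\delta' = \dim(\ker Y\cap\widetilde S')$ where $\widetilde S'\subseteq\R^{\C'}$ is the edge space of $\Re'$. By assumption~2, $\dim(\ker Y\cap\widetilde S)=1$. Using assumption~1, fix rate constants $\{k_i\}_{i=1}^r$ and a positive equilibrium $\bar{\mathbf{c}}\in\R^m_{>0}$, and set $\Phi := A_k(\bar{\mathbf{c}}^Y)$. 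By \eqref{Ak}, $\Phi\in\mathrm{im}(A_k)\subseteq\widetilde S$, and $\Phi\in\ker Y$ because (as in the proof of Theorem~\ref{thm:cor1}) $\bar{\mathbf{c}}^Y\in\ker(YA_k)$; moreover $\Phi\ne 0$, since $\ker A_k$ is spanned by vectors supported on terminal strong linkage classes (Lemma~\ref{thm:kernel}), whereas $\bar{\mathbf{c}}^Y$ has full support and, by assumption~3, there is at least one non-terminal complex, so $\bar{\mathbf{c}}^Y\notin\ker A_k$. Hence $\ker Y\cap\widetilde S = \R\Phi$, and since $\widetilde S'\subseteq\widetilde S$ we get $\delta'\in\{0,1\}$, with $\delta'=1$ exactly when $\Phi\in\widetilde S'$. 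Expanding \eqref{Ak} identifies $\Phi_z = \sum_{i:\,y_i'=z}k_i\bar{\mathbf{c}}^{y_i}-\sum_{i:\,y_i=z}k_i\bar{\mathbf{c}}^{z}$, i.e., $\Phi_z$ is the net reaction flux into complex $z$ evaluated at $\bar{\mathbf{c}}$.

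It remains to exclude $\Phi\in\widetilde S'$. Since $\widetilde S'$ is the direct sum over $k$ of the spaces $\{\phi\in\R^{\Lambda_{j_k}}:\sum_{z\in\Lambda_{j_k}}\phi_z=0\}$, the membership $\Phi\in\widetilde S'$ would force (a) $\Phi_z=0$ for every $z\notin\C'$, in particular for every non-terminal complex and every complex forming a singleton terminal strong linkage class, and (b) $\sum_{z\in\Lambda_{j_k}}\Phi_z=0$ for each $k$. Now I would pick a non-terminal complex $y^{(1)}$ (available by assumption~3); its linkage class $\mathcal L$ contains a non-terminal complex, hence contains a terminal strong linkage class $\Lambda^*\subsetneq\mathcal L$. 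Connectedness of $\mathcal L$ and terminality of $\Lambda^*$ force at least one reaction from $\mathcal L\setminus\Lambda^*$ into $\Lambda^*$, so at the positive vector $\bar{\mathbf{c}}$ the total flux into $\Lambda^*$ from outside is strictly positive. If $\Lambda^*=\{z^*\}$ this flux equals $\Phi_{z^*}$, contradicting (a); if $|\Lambda^*|\ge 2$, then (internal fluxes cancel and no flux leaves a terminal class) it equals $\sum_{z\in\Lambda^*}\Phi_z$ with $\Lambda^*$ one of the $\Lambda_{j_k}$, contradicting (b). Either way $\Phi\notin\widetilde S'$, so $\delta'=0$. The crux of the whole argument is this last step --- reading $\Phi=A_k(\bar{\mathbf{c}}^Y)$ as a net-flux vector and checking that every terminal strong linkage class sitting properly inside its own linkage class receives strictly positive influx; this is precisely where positivity of $\bar{\mathbf{c}}$ (assumption~1) and the existence of a non-terminal complex (supplied by assumption~3) are both indispensable, and it is the point at which the argument correctly fails for weakly reversible networks, where $\C'=\C$ and $\delta'=\delta=1$. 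Conservativeness, inherited from Theorem~\ref{thm:cor1}, is not needed for this structural statement.
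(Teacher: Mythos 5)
Your proof is correct, and it reaches the conclusion by a genuinely different route than the paper's. The paper's own argument is much terser: weak reversibility is asserted ``by construction,'' and for the deficiency it observes that $\ker(Y'A'_k)$ has a basis $\{\mathbf b_1,\dots,\mathbf b_{t'}\}$ supported on the terminal strong linkage classes of the post-absorption network --- essentially because any element of $\ker(Y'A'_k)$, extended by zero, lies in $\ker(YA_k)=\mathrm{span}\{\bar{\mathbf c}^Y,\mathbf b_1,\dots,\mathbf b_t\}$ and must have zero coefficient on the fully supported vector $\bar{\mathbf c}^Y$ --- and then invokes Lemma \ref{thm:kernel} together with an identity from Proposition 5.1 of \cite{Gunawardena} (for weakly reversible networks, $\delta'=\dim\ker(Y'A'_k)-\dim\ker(A'_k)$) to conclude $\delta'=t'-t'=0$. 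You instead work with the ``deficiency subspace'' $\ker Y\cap\widetilde S$: the rank--nullity identity $\delta=\dim(\ker Y\cap\widetilde S)$, the identification of its generator as the net-flux vector $\Phi=A_k(\bar{\mathbf c}^Y)$, and the strictly positive influx into any terminal strong linkage class properly contained in its linkage class. Both arguments pivot on the same underlying fact --- the single extra kernel dimension created by the positive equilibrium necessarily involves the non-terminal complexes and therefore cannot survive restriction to the terminal part --- but yours is self-contained (no appeal to the external proposition or to Lemma \ref{thm:kernel2}), and the flux reading of $\Phi$ makes transparent exactly where positivity of $\bar{\mathbf c}$ and the existence of a non-terminal complex enter. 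Your explicit combinatorial verification of weak reversibility (which the paper leaves as ``by construction'') and your closing observation that conservativity plays no role in this structural statement are both correct.
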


\begin{proof}
	The network $\{\S',\C',\Re'\}$ is weakly reversible by construction.  From the proofs of Theorems \ref{thm:main_general} and \ref{thm:cor1}, one basis of the kernel of $Y'A_k'$, which are the associated linear operators for the system $\{\S',\C',\Re'\}$,   has elements, $\{\textbf{b}_1,\dots, \textbf{b}_{t'}\}$, where $\textbf{b}_\theta$ only has support on the $\theta$th terminal strong linkage class of $\{\S',\C',\Re'\}$. By Lemma \ref{thm:kernel} and Proposition 5.1 in \cite{Gunawardena}, we can conclude that the network has a deficiency of zero.
\end{proof}

\subsection{Comments on Theorems \ref{thm:main_general} and \ref{thm:cor1}}
\label{sec:notes}

In this section, we  make a few notes and comments on Theorems \ref{thm:main_general} and \ref{thm:cor1}.  In Section \ref{sec:higher_deficiencies} we demonstrate through an example that there is a difference in content between Theorems \ref{thm:main_general} and \ref{thm:cor1}.  In Section \ref{sec:why_conservation}, we demonstrate through an example the importance of the conservation relation condition in our results.  Finally, in Section \ref{sec:further} we provide  motivation for a conjecture for even more general results pertaining to all ACR models.

\subsubsection{Importance of the kernel condition: higher deficiency models}
\label{sec:higher_deficiencies}

It is clear that both the networks (\ref{system1}) and (\ref{system2}) satisfy the assumptions of Theorem \ref{thm:cor1}.  However, Theorem \ref{thm:cor1}  stands silent on models with a deficiency greater than one.  Here we provide an example of a deficiency two network taken from the  literature that satisfies the assumptions of Theorem \ref{thm:main_general}, demonstrating that there is a true difference in content between the two theorems.\\

\noindent {\it Example 3:} Consider the following generalization of Example 2, in which the phosphatase stimulation effects of both ATP and ADP are simultaneously considered:
	 \begin{equation}
\label{model:def2}
\begin{split}
& XD \mathop{\stackrel{k_1}{\rightleftarrows}}_{k_2[D]} X \mathop{\stackrel{k_3[T]}{\rightleftarrows}}_{k_4} XT \stackrel{k_5}{\rightarrow} X_p \\
& X_p + Y \mathop{\stackrel{k_6}{\rightleftarrows}}_{k_7} X_pY \stackrel{k_8}{\rightarrow} X + Y_p \\
& XD + Y_p \mathop{\stackrel{k_9}{\rightleftarrows}}_{k_{10}} XDY_p \stackrel{k_{11}}{\rightarrow} XD + Y\\
&XT + Y_p \mathop{\stackrel{k_{12}}{\rightleftarrows}}_{k_{13}} XTY_p \stackrel{k_{14}}{\rightarrow} XT + Y_p .
\end{split}
\end{equation}
 This model was shown in \cite{Sh-F} to have a deficiency of two, but still exhibit absolute concentration robustness in $Y_p$.  They showed this through an explicit calculation of the equilibria as opposed to using theoretical results.  As the deficiency of this network is two, Theorem \ref{thm:cor1} is inapplicable.  To apply Theorem \ref{thm:main_general} we note that there are two pairs of complexes satisfying $y \ll y'$:
\[
	XD + Y_p \ll XD, \quad \text{ and } \quad XT + Y_p \ll XT.
\]
That is to say, we have $\mathcal{C}^* = \left\{ XD + Y_p, XT + Y_p \right\}$ and $\mathcal{C}^{**} = \left\{ XD, XT \right\}$ in assumption 1 of  Theorem \ref{thm:main_general}. We consider the following ordering of the non-terminal complexes, which are the only source complexes in this model:
\begin{equation}
\label{333333}
	\{XD,\ X,\ XT,\ X_p + Y,\ X_pY,\ XD+Y_p,\ XDY_p,\ XT+Y_p,\ XTY_p\}.
\end{equation}
Choosing the rate constants $k_1=\cdots=k_{13}=1$,  there are {\it two} basis vectors of $YA_k$ with non-zero support on the complexes as ordered in (\ref{333333}); the components of these basis vectors restricted to the first 9 components (corresponding with the non-terminal complexes) are  
	\[
		\left\{ [2,2,1,2,1,2,1,0,0],[2,2,1,2,1,0,0,2,1] \right\}.
	\]	 
	Since $XD+Y_p$ only  has support on the first basis vector above, and $XT + Y_p$ only has support on the second, we see that assumption 2 of Theorem \ref{thm:main_general} is satisfied, and the conclusion holds.

\subsubsection{Importance of the conservation relation}
\label{sec:why_conservation}

Here we provide an example in which the conclusions of Theorem \ref{thm:cor1} do not hold if only the conservation assumption is dropped.\\

\noindent {\it Example 4:} Consider the chemical reaction network
\begin{equation}
\label{model:non_conserv}
\begin{split}
& A + B \stackrel{\alpha}{\rightarrow} 0 \\
& B \stackrel{\beta}{\rightarrow} A + 2B.
\end{split}
\end{equation}
The network has a deficiency of one,  the non-terminal complexes $A+B$ and $B$ differ only in the species $A$,  and the corresponding deterministic mass-action systems have  positive equilibria so long as $c_A(0)-c_B(0) < \beta/\alpha$. It follows by Theorem \ref{thm:marty} that the corresponding mass-action systems exhibit absolute concentration robustness in $A$. It can be checked explicitly that the equilibrium value is
\[\bar{c}_{A} = \frac{\beta}{\alpha}.\]

The corresponding stochastic chemical reaction system, however, does not necessarily converge to the boundary as predicted by Theorem \ref{thm:main_general}. For any compatibility class $M:= X_{B}-X_{A}$ where $M \geq 1$ we can use results related to birth and death processes to conclude that the stationary distribution for the process tracking the number of $A$ molecules has the form 
\[
	\pi^M(i) = \frac{K^M}{(M+i)i!} \left( \frac{\beta}{\alpha} \right)^i \; \; \; \; \; \; \; \; \; i = 0, 1, 2, \ldots
\]
where $K^M$ is the normalizing constant
\[
	K^M =  \left( \sum_{j=0}^\infty \frac{1}{(M+j)j!} \left( \frac{\beta}{\alpha} \right)^j \right)^{-1}.
\]
Interestingly, it can be seen by standard analytical methods \cite{LittleRudin} that for any $n \geq 0$ we have
\[
\begin{split}
\lim_{M \to \infty} \pi^M(i) & = \left[ \frac{1}{i!} \left( \frac{\beta}{\alpha} \right)^i \right] / \left[ \sum_{j=0}^\infty \frac{1}{j!} \left( \frac{\beta}{\alpha} \right)^j \right]\\
& = \frac{e^{-\frac{\beta}{\alpha}}}{i!} \left( \frac{\beta}{\alpha} \right)^i.
\end{split}
\]
In the limit as $M \to \infty$, therefore, we have that the stationary distribution approaches a Poisson distribution with mean $\beta / \alpha$.

It is easy to see how the argument presented in  Section \ref{proofsection} breaks down. Since the network (\ref{model:non_conserv}) is not conservative, we are not guaranteed that the $\{(A+B)\}$-reduced subnetwork obtained by removing reactions from $A+B$ has a positively recurrent irreducible component. The relevant subnetwork for this example is $B \to A+2B$ which clearly grows unboundedly and therefore does not possess such a component.

\subsubsection{Conclusions hold for further ACR models}
\label{sec:further}

We have already seen with (\ref{model:def2}) that, while the conditions of Theorem \ref{thm:marty} are sufficient for ACR in deterministically-modeled mass-action systems, they are not necessary. It is also the case that, while Theorems \ref{thm:main_general} and \ref{thm:cor1}  capture some ACR systems not captured by Theorem \ref{thm:marty}, they do not capture all such systems.
Nevertheless, a sample of such systems suggests that the conclusions of Theorems \ref{thm:main_general} and \ref{thm:cor1}  hold for a much wider class of conservative ACR networks. We now present one such system.\\

\noindent {\it Example 5:} Consider the following network, which is taken from \cite{N-G-N}:
\begin{equation}
\label{system5}
\begin{split}
A + X & \mathop{\stackrel{k_1}{\rightleftarrows}}_{k_2} F + Y \\
A & \stackrel{k_3}{\to} B \\
C + F \stackrel{k_4}{\to} E & \stackrel{k_5}{\to} D + F \\
B + D & \stackrel{k_6}{\to} A + C \\
X & \mathop{\stackrel{k_7}{\rightleftarrows}}_{k_8} Y.
\end{split}
\end{equation}
The system is governed by the system of differential equations
\begin{equation}
\label{system5de}
\begin{split}
\dot{c}_A & = -k_1c_Ac_X + k_2c_Fc_Y - k_3 c_A + k_6c_Bc_D \\
\dot{c}_B & = k_3c_A - k_6 c_B c_D \\
\dot{c}_C & = -k_4c_Cc_F + k_6c_Bc_D \\
\dot{c}_D & = k_5c_E-k_6c_Bc_D \\
\dot{c}_E & = k_4 c_Cc_F - k_5 c_E \\
\dot{c}_F & = k_1 c_A c_X - k_2 c_F c_Y - k_4 c_C c_F + k_5 c_E \\
\dot{c}_X & = -k_1 c_A c_X + k_2 c_F c_Y - k_7 c_X + k_8 c_Y \\
\dot{c}_Y & = k_1 c_A c_X - k_2 c_F c_Y + k_7 c_X - k_8 c_Y.
\end{split}
\end{equation}
It was shown in \cite{N-G-N} that every positive equilibrium concentration satisfies
\[\bar{c}_C = \frac{k_2 k_3 k_7}{k_1 k_4 k_8}.\]
That is to say, the system (\ref{system5de}) is ACR in the concentration of $C$.

Despite the system being demonstrably ACR in $C$, however, it is not amenable to Theorem \ref{thm:marty}. The network (\ref{system5}) is deficiency one ($\delta = n - \ell - s = 11 - 5 - 5 = 1$) but there do not exist two non-terminal complexes which differ only in the species $C$ (or any species). Since no non-terminal complex is dominated by any other non-terminal complex, we are also unable to apply Theorem \ref{thm:main_general} or Theorem \ref{thm:cor1}.

Nevertheless, we can see that there does not exist a positively recurrent state with mass on any non-terminal complex. To see this, we start by considering the following conservation laws:
\begin{equation}
\label{conservation5}
\begin{split}
T_{XY} & := X(0) + Y(0) \\
T_{ABDF} & := A(0) + B(0) + D(0) + F(0) \\
T_{CDE} & := C(0) + D(0) + E(0).
\end{split}
\end{equation}
Now consider the following chain of reactions from an arbitrary positive starting count in each species:
\begin{enumerate}
\item
Convert all $E$ into $D$ and $F$ through $E \to D+F$.
\item
Convert all $F$ into $A$ through $F+Y \to A+X$ (replenishing $Y$ with $X \to Y$ as necessary).
\item
Convert all $D$ into $C$ through $B + D \to A +C$ (replenishing $B$ with $A \to B$ as necessary).
\item
Convert all $A$ into $B$ through $A \to B$.
\end{enumerate}
From the conservation laws (\ref{conservation5}), the final state of this algorithm is $\bar{X}_A = 0$, $\bar{X}_B = T_{ABDF} > 0$, $\bar{X}_C = T_{CDE} > 0$, $\bar{X}_D=0$, $\bar{X}_E=0$, $\bar{X}_F=0$, $X_X>0$, $X_Y>0$, from which no reaction from a non-terminal complex may proceed. In fact, the only reactions which may proceed are $X \to Y$ and $Y \to X$. It follows that, with a probability of one, there is a final time for which any chain has positive mass on any non-terminal complex. This is far removed from the deterministic prediction for the long-term value of $X_C$.

We therefore make the following conjecture:

\begin{conjecture}
\label{conjecture}
Consider a conservative chemical reaction network $(\mathcal{S},\mathcal{C},\mathcal{R})$ and associated stochastic chemical reaction system with some stoichiometrically admissible kinetics. Suppose that for some choice of rate constants $\{k_i\}_{i = 1}^r$ the deterministically modeled mass-action system exhibits ACR in some species $X_i$. Then the conclusions of Theorems \ref{thm:main_general} and  \ref{thm:cor1} hold.
\end{conjecture}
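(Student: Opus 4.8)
The plan is to mirror the proof of Theorem~\ref{thm:main_general}, replacing its two concrete hypotheses (the domination pair $y \ll y'$ and the kernel/support condition) by consequences one would try to extract from the single assumption of absolute concentration robustness together with conservativity. Concretely, I would aim to prove the following structural lemma, which is the heart of the matter: if a conservative network is ACR in $X_i$ for some rate constants $\{k_i\}$, then there is a set $\mathcal{R}' \subseteq \mathcal{R}$ of reactions, each of whose source complexes is non-terminal, such that (a) in the $\mathcal{R}'$-reduced network every positive recurrent state has no mass on any non-terminal complex of the original network, and (b) from every such state no reaction of $\mathcal{R}'$ can fire either. Given this lemma, the end of the proof of Theorem~\ref{thm:main_general} goes through essentially verbatim: by (b) the removed reactions never re-open the non-terminal complexes, so the positive recurrent components of the reduced and original chains coincide, and the passage from mass-action to arbitrary stoichiometrically admissible kinetics follows, as before, from the fact that accessibility is kinetics-independent (observation $(iii)$ in that proof).

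For part (a) I would retain the $G_{\pi_{\mathcal{I}}}$ construction: assuming for contradiction a positive recurrent state of the reduced chain with mass on a non-terminal complex, the law of large numbers applied to the Poisson clocks together with ergodicity produces a nonnegative vector $G_{\pi_{\mathcal{I}}} \in \ker(Y A_k)$ for the reduced network which vanishes on every complex that is not a source of the reduced network — in particular on the source complexes of $\mathcal{R}'$ — yet is positive on at least one non-terminal complex. The task is to show this is impossible. Here one would want a finer description of $\ker(Y A_k)$ for ACR networks, namely that ACR forces the nonnegative elements of $\ker(Y A_k)$ to have support that is either disjoint from, or contains all of, the non-terminal complexes; this is exactly what the deficiency-one argument in the proof of Theorem~\ref{thm:cor1} establishes when $\delta = 1$, via the single extra basis vector $\bar{\mathbf{c}}^Y$ furnished by Lemma~\ref{thm:kernel} and Lemma~\ref{thm:kernel2}. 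Identifying which reactions belong in $\mathcal{R}'$, and why removing them destroys all nonnegative kernel vectors of ``partial'' support, is where essentially all the difficulty lies.

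To make progress I would proceed example-first: work out the reduction $\mathcal{R}'$ for the known ACR networks not covered by Theorems~\ref{thm:marty}--\ref{thm:cor1}, such as network~\eqref{system5} and the deficiency-two network~\eqref{model:def2}, seeking a canonical choice of $\mathcal{R}'$ — for instance the reactions whose firing is ``responsible'' for displacing the robust species, as isolated by the Shinar--Feinberg kernel/tier analysis of ACR equilibria. I would then use conservativity to supply what reduction alone cannot: that the $\mathcal{R}'$-reduced network still has a positive recurrent component accessible from every state, which I expect to handle by a Foster--Lyapunov argument using the strictly positive conservation vector $w$ of Definition~\ref{conservative} as a bounded Lyapunov function — precisely the ingredient that fails in the non-conservative Example~4, so conservativity must enter here.

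The step I expect to be the genuine obstacle is the structural lemma itself, specifically the claim that ACR together with conservativity always yields a reaction set $\mathcal{R}'$ whose removal simultaneously (i) leaves a positive recurrent component reachable from everywhere, (ii) kills every nonnegative kernel vector whose support meets but does not contain the non-terminal complexes, and (iii) still cannot fire from the resulting recurrent states. There is at present no classification of general ACR networks strong enough to produce such an $\mathcal{R}'$, and without the deficiency-one bound $\dim \ker(Y A_k) \le 1 + t$ one loses the control over $\ker(Y A_k)$ that drove the proofs of Theorems~\ref{thm:main_general} and~\ref{thm:cor1}; closing that gap is exactly what would turn the conjecture into a theorem.
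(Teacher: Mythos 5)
This statement is an open conjecture in the paper, not a theorem: the authors offer only example-based evidence (the network \eqref{system5} of Example~5, for which they verify the conclusion by an explicit hand-built reaction sequence) and then explicitly pose the general claim as Conjecture~\ref{conjecture}. There is therefore no proof in the paper to compare against, and your proposal does not supply one either. What you have written is a research plan that reduces the conjecture to an unproven ``structural lemma'' — the existence of a reaction set $\mathcal{R}'$ whose removal simultaneously satisfies your conditions (i)--(iii) — and that lemma is essentially a repackaging of the conjecture itself, as you acknowledge in your final paragraph. The genuine gap is exactly where you locate it: without the deficiency-one bound $\dim\ker(YA_k) \le 1 + t$ from Lemma~\ref{thm:kernel2}, there is no known description of $\ker(YA_k)$ for a general conservative ACR network that forces nonnegative kernel vectors to have support either containing all non-terminal complexes or none of them, and no known canonical choice of $\mathcal{R}'$. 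Example~5 itself illustrates the difficulty: there is no dominated non-terminal complex at all, so the reduction step of Theorem~\ref{thm:main_general} has no obvious analogue, and the authors' verification proceeds by an ad hoc absorption path rather than by any kernel argument.

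That said, your diagnosis of the proof architecture is accurate and matches how the authors' own arguments are organized: the $G_{\pi_{\mathcal{I}}}$ ergodic-average construction, the role of conservativity in guaranteeing that the reduced chain has positive recurrent states (and its failure in the non-conservative Example~4), and the kinetics-independence of accessibility are all correctly identified as the reusable ingredients. If you pursue this, the example-first strategy you describe is sensible, but be aware that a single network like \eqref{system5} already shows that $\mathcal{R}'$ cannot in general be read off from a domination relation among complexes, so any general construction of $\mathcal{R}'$ will need input from the structure of the ACR equilibria themselves rather than from the complex lattice alone.
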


\section{Time until absorption and quasi-stationary distributions}
\label{analysissection}

 If the time to absorption is large relative to the relevant time-scales of the system, the processes will seem to settle down to an equilibrium long before the resulting instability will appear.  This limiting distribution is called a \textit{quasi-stationary} distribution, and we refer the reader to \cite{D-S} for a proof of that fact that such distributions exists, and are unique, in the current setting.  The resulting distribution  bridges the gap between the extinction event and the transient behavior of the process.

In this section, we briefly introduce the notation and background results relevant to the study of quasi-stationary distributions, though we refer the interested reader to \cite{Meleard_Quasi} for a recent survey on quasi-stationary distributions and population processes for a thorough introduction to the topic, including bibliography. We  apply the results to the activation/deactivation network (\ref{system1}) and the  EnvZ/OmpR signaling network (\ref{system2}).

\subsection{Quasi-stationary probability distributions}
\label{stabilitysection}

We  divide the state space $\mathbb{Z}_{\geq 0}^m$ into the set $\mathbf{X}_T$ of transient states and $\mathbf{X}_A$ of absorbing states.  There are two related notions that capture the relevant long-term transient dynamics which we take from \cite{Meleard_Quasi}.  

\begin{definition}
 We say that the process $\mathbf{X}(t)$ has a \textit{Yaglom limit} if there exists a probability distribution $\tilde \pi$ on $\mathbf{X}_T$ such that, for any $\mathbf{X,Y}\in \mathbf{X}_T$,
	\[
		\lim_{t\to \infty} P(\X(t) = \mathbf{Y} \ | \ \X(t) \notin \X_A, \X(0) = \X) = \tilde \pi(\mathbf{Y}).
	\]
\end{definition}
\begin{definition}
\label{def:quasi-stationary}
Let $\tilde \pi$ be a probability distribution on $\X_T$.  We say that $\tilde \pi$ is a {\it quasi-stationary  distribution} (QSD) if, for all $t \ge 0$ and $\mathbf{Y} \in \X_T$
\begin{equation}
\label{quasi1}
	\tilde \pi(\mathbf{Y}) = P_{\tilde \pi}(\X(t) = \mathbf{Y}| \X(t) \notin \X_A),
\end{equation}
where $P_{\tilde \pi}$ is the distribution of the process given an initial distribution of  $\tilde \pi$.   
\end{definition}

\noindent In other words, the Yaglom limit is the limiting distribution of a chain conditioned on not entering the absorbing set, while the quasi-stationary probability distribution is the distribution which remains unchanged under the condition that the associated chain does not enter a state in the absorbing set. In the current setting of a continuous time Markov chain with a finite state space, the existence, uniqueness, and equivalence of the Yaglom limit and QSD is established \cite{D-S,Meleard_Quasi}.

A sometimes useful formula for the quasi-stationary distribution $\tilde{\pi}$ can be derived by first introducing the conditional probabilities
\begin{equation}
\label{8392}
Q_t(\X) := P( \mathbf{X}(t) = \mathbf{X} \; | \; \mathbf{X}(t) \not\in \mathbf{X}_A ) = \frac{P_t(\X)}{1 - P_t(\X_A)}
\end{equation}
where $P_t(\X_A): = P(\X(t) \in \X_A)$ is the probability the chain has entered an absorbing state by time $t$. In the present setting of a CTMC with finite state space we may differentiate \eqref{8392} and collect terms appropriately to see that the evolution of the conditioned probabilities $Q_t$ satisfy a system of ordinary differential equations.   Setting the resulting set of equations to zero, substituting $\tilde \pi$ for $Q_t$, and solving yields the system
\begin{equation}
\label{98e}
\tilde{\pi} \A_Q = \theta \tilde{\pi},
\end{equation}
where $\A_Q$ is the restriction of the jump rate matrix $\A$ given in (\ref{def:cme}) to the transient states $\mathbf{X}_T$.  The term $\theta$ is negative and can be explicitly computed \cite{Meleard_Quasi}, but is not important for our purposes.  A simple calculation using the Perron-Frobenius Theorem shows that there is a unique solution to \eqref{98e} with strictly positive components, which is also the unique quasi-stationary distribution; see, for example,  \cite{Collet2013,Nasell1996}.

Consideration of the quasi-stationary distribution $\tilde{\pi}$ is most appropriate when the chain $\mathbf{X}(t)$ is expected to spend significant time in the transient region before converging to the absorbing set. In order to quantify this hitting time, it is typical to define
\[\tau_A(\mathbf{X}) := \inf ( t \geq 0 \; | \; \mathbf{X}(t) \in \mathbf{X}_A, \mathbf{X}(0) = \mathbf{X} )\]
to be the first time the chain $\mathbf{X}(t)$ with initial condition $\mathbf{X}(0) = \mathbf{X}$ enters the absorbing set. The quantity $\mathbbm{E}(\tau_A(\mathbf{X}))$ is called the {\it expected time to absorption}. Explicit formulas for $\mathbbm{E}(\tau_A(\mathbf{X}))$ for continuous time Markov chains with countable state spaces may be found in Section 6.7 of \cite{Allen}.

\subsection{Activation/deactivation system}
\label{convergencesection}

Reconsider the activation/deactivation network (\ref{system1}). We have demonstrated previously that the deterministically modeled mass-action system (\ref{de}) exhibits ACR in the species $A$ while trajectories of the stochastic chemical reaction system converge almost surely to the boundary state (\ref{system1boundary}) in finite time. In order to search for ACR like behavior in the stochastic setting, therefore, we consider the quasi-stationary distribution when the expected time until absorption, $\mathbbm{E}(\tau_A(\mathbf{X}))$, is high.

\begin{figure}[h]
\vspace{-0.5cm}
\begin{center}
\includegraphics[width=15cm]{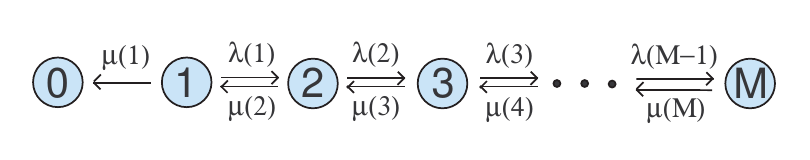}
\end{center}
\vspace{-1cm}
\caption{Birth-death chain corresponding to the activation/deactivation network (\ref{system1}). The number $i$ corresponds to the count of $X_B$ and the values $\lambda(i)$ and $\mu(i)$ correspond to the transition propensities of the first and second reaction, respectively, from the state $X_B=i$. The chain has a unique sink at $i=0$.}
\label{supp-figure1}
\end{figure}

We consider the stochastic model corresponding to (\ref{system1}) for a fixed value $M:= X_A(0) + X_B(0) > 0$. Notice that determining the behavior of the chain $X_B(t)$ is sufficient to determine the chain on $A$ by the conservation relationship $X_A(t) = M - X_B(t)$. Also notice that the first reaction in (\ref{system1}) increases $X_B$ by one while the second reaction decreases $X_B$ by one. Consequently, the network (\ref{system1}) corresponds to a birth-death chain with an absorbing state at $X_B=0$. For a graphical illustration of the state space of the chain $X_B$, see Fig. \ref{supp-figure1}. The propensities $\lambda(i)$ and $\mu(i)$ are given by 
\begin{equation}
\label{propensities}
\begin{split}
\lambda(i) & = \alpha X_B X_A = \alpha i ( M - i), \\
\mu(i) & = \beta X_B = \beta i.
\end{split}
\end{equation}

The chain $X_B(t)$ with transition propensities (\ref{propensities}) has been considered in a number of different contexts. The chemical reaction network (\ref{system1}) is considered explicitly in \cite{O-S-W} where only the labeling of the species differs. A more extensive analysis is given in the papers \cite{A-B,K-L,Nasell1996,W-D} where the chain $X_B(t)$ corresponds to chain of infected individuals $I(t)$ in the susceptible-infected-susceptible (SIS) epidemic model. The model also arises in stochastic logistic population growth models on a finite state space \cite{Nasell1999,Nasell2001,Norden1982}.

The expected time until absorption has an explicit formula for birth-death chains with a countable state space \cite{Allen}. 
We let $\tau_i$ be shorthand for the time to absorption from the state $X_B = i$.  Substituting the propensities (\ref{propensities}) into equation (6.22) of \cite{Allen} and simplifying yields
\begin{equation}
\label{solution2}
\mathbbm{E}(\tau_i) = \displaystyle{\sum_{k=0}^{i-1} \sum_{j=k+1}^M \frac{1}{\beta} \left( \frac{\alpha}{\beta} \right)^{j-k-1} \frac{(M-k-1)!}{j (M-j)!}}
\end{equation}
for $i=1, \ldots, M$. The numerical results contained in Table \ref{table1} justify the consideration of the quasi-stationary distribution $\tilde{\pi}$ for values of $M$ where $\mathbbm{E}(\tau_i)$ is large.

\begin{table}
    \centering
	\begin{subtable}[b]{0.3\textwidth}
	\centering
	\begin{tabular}{c|c}
		$M$ & $\mathbbm{E}(\tau_1)$ (s) \\
		\hline
		\hline
		$5$ & $0.0438$ \\
		$10$ & $0.0491$ \\
		$15$ & $0.0572$ \\
		$20$ & $0.0699$ \\
		$25$ & $0.0930$ \\
		\hline
	\end{tabular} \vspace{0.15in}
	\caption{Small-scale}
	\end{subtable}
	\begin{subtable}[b]{0.3\textwidth}
	\centering
	\begin{tabular}{c|c}
		$M$ & $\mathbbm{E}(\tau_1)$ (s) \\
		\hline
		\hline
		$30$ & $0.147$ \\
		$35$ & $0.332$ \\
		$40$ & $1.412$ \\
		$45$ & $12.913$ \\
		$50$ & $233.051$ \\
		\hline
	\end{tabular} \vspace{0.15in}
	\caption{Mid-scale}
	\end{subtable}
	\begin{subtable}[b]{0.3\textwidth}
	\centering
	\begin{tabular}{c|c}
		$M$ & $\mathbbm{E}(\tau_1)$ (s) \\
		\hline
		\hline
		$55$ & $7.42 \times 10^{3}$ \\
		$60$ & $3.88 \times 10^{5}$ \\
		$65$ & $3.16 \times 10^{7}$ \\
		$70$ & $3.87 \times 10^{9}$ \\
		$75$ & $6.87 \times 10^{11}$ \\
		\hline
	\end{tabular} \vspace{0.15in}
	\caption{Large-scale}
	\end{subtable}
	\caption{Expected time until absorption for the activation/deactivation system (\ref{system1}) from the state $(X_A,X_B)=(M-1,1)$ with parameter values $\alpha = 1$, $\beta = 25$. Note that convergence to a positive equilibrium concentration is predicted in the deterministic model for $M > 25$ but that the expecting hitting time for the absorbing state (\ref{system1boundary}) grows only modestly in the range Mid-scale range of $M$.}
	\label{table1}
	\hspace*{\fill}
\end{table}

A numerical iteration scheme for approximating the the quasi-stationary distribution $\tilde{\pi}$ of a finite-state birth and death process is given in \cite{Cavender1978} and adapted to the stochastic SIS model in \cite{Nasell1996}. Relabeling and rearranging the relevant parameters to fit (\ref{propensities}), we have that the quasi-stationary distribution $\tilde{\pi}$ of the CTMC corresponding to (\ref{system1}) with propensities (\ref{propensities}) can be approximated as the limit of the iterative scheme
\begin{equation}
\label{iterative}
\begin{split}
\tilde{\pi}_j(1) & = \sum_{i=1}^M \left[ \frac{1}{i} \left( \sum_{k=1}^{i} \frac{(M-k)!}{(M-i)!} \left( \frac{\beta}{\alpha} \right)^{i-k} \left( 1 - \sum_{l = 1}^{k-1} \tilde{\pi}_{j-1}(l) \right) \right) \right] \\
\tilde{\pi}_j(i) & = \frac{1}{i} \left[ \sum_{k=1}^{i} \frac{(M-k)!}{(M-i)!} \left( \frac{\beta}{\alpha} \right)^{i-k} \left( 1 - \sum_{l = 1}^{k-1} \tilde{\pi}_{j-1}(l) \right) \right] \tilde{\pi}_{j-1}(1), \; \; \; i=2, \ldots, M.
\end{split}
\end{equation}
That is to say, we have $\tilde{\pi}_j(i) \to \tilde{\pi}(i)$ as $j \to \infty$ \cite{Cavender1978}. To obtain the quasi-stationary distribution in the ACR species $A$, we need to invert $\tilde{\pi}$. Hence, we compute the distribution $\tilde{\pi}_A$ which has entries $\tilde{\pi}_{A}(i) = \tilde{\pi}(M-i)$, $i=0, \ldots, M-1$. The results are shown in Fig. \ref{fig:quasi-stationary}.


\begin{wrapfigure}{r}{4in}
        \centering
        \includegraphics[width=0.6\textwidth]{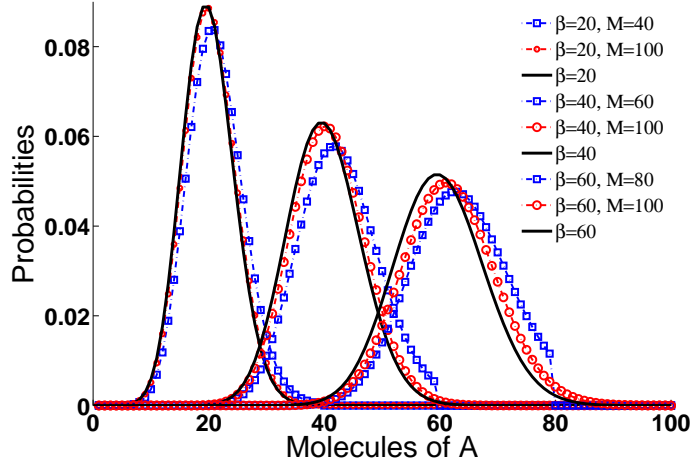}
        \caption{Quasi-stationary distributions of $X_A$ with $\alpha = 1$ and various values of $\beta$ and $M$. As $M \to \infty$ the quasi-stationary distributions approaches the overlain Poisson distributions \eqref{poisson} (solid line).}
	\label{fig:quasi-stationary}
\end{wrapfigure}

A striking feature of the quasi-stationary distributions shown in Fig. \ref{fig:quasi-stationary} is that they appear to approach the Poisson distribution 
\begin{equation}
\tilde{\pi}(i) = e^{-\left( \frac{\beta}{\alpha} \right)} \frac{\left( \frac{\beta}{\alpha} \right)^i}{i!},
\label{poisson}
\end{equation}
 as $M \to \infty$. It has recently been shown that the ``one permanently infected'' approximation of the quasi-stationary distribution approaches the Poisson distribution (\ref{poisson}) in this limit \cite{H-C-L-H}. For completeness, we now prove that the true quasi-stationary distribution approaches a Poisson distribution.  

\begin{lemma}
\label{poissonproof}
Consider the stochastically modeled activation/deactivation network (\ref{system1}) with mass-action propensities (\ref{propensities}). The quasi-stationary distribution of $X_A$ converges to the Poisson distribution (\ref{poisson}) in the limit $M \to \infty$.
\end{lemma}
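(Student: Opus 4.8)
The plan is to analyze the process directly as a birth--death chain. By the conservation relation $X_A(t)=M-X_B(t)$, the count $X_A$ is a continuous-time birth--death chain on the transient states $\{0,1,\dots,M-1\}$ with a single absorbing state at $X_A=M$: from state $X_A=j$ the reaction $B\to A$ raises $X_A$ at rate $a_j=\beta(M-j)$, while the reaction $A+B\to 2B$ lowers it at rate $b_j=\alpha j(M-j)$. By the cited results on quasi-stationary distributions for finite, irreducible, absorbing continuous-time Markov chains, for each fixed $M$ the QSD $\tilde\pi_M$ of $X_A$ exists, is unique and strictly positive on $\{0,\dots,M-1\}$, and solves the eigenvalue problem $\tilde\pi_M\mathcal{A}_Q=\theta_M\tilde\pi_M$ with $\theta_M<0$.

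The first step is to turn this eigenvalue equation into a recursion. Summing the components of $\tilde\pi_M\mathcal{A}_Q=\theta_M\tilde\pi_M$ over the states $0,\dots,j$ leaves only the net probability flux across the cut separating $\{0,\dots,j\}$ from the higher states, giving
\[
 b_{j+1}\,\tilde\pi_M(j+1)=a_j\,\tilde\pi_M(j)+\theta_M\,S_j^M,\qquad S_j^M:=\sum_{i=0}^{j}\tilde\pi_M(i),\qquad 0\le j\le M-2 .
\]
Summing instead over all transient states, and using $\sum_i\tilde\pi_M(i)=1$ together with the fact that the only transition from the transient set into the absorbing state is the rate-$\beta$ jump out of $M-1$, collapses the left-hand side and yields $\theta_M=-\beta\,\tilde\pi_M(M-1)$; in particular $-\beta\le\theta_M\le 0$ for every $M$. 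Since $\theta_M\le 0$, the recursion gives $b_{j+1}\tilde\pi_M(j+1)\le a_j\tilde\pi_M(j)$, and because $a_j/b_{j+1}=\beta(M-j)/[\alpha(j+1)(M-j-1)]$ is bounded above by $2\beta/[\alpha(j+1)]$ for all $0\le j\le M-2$, an easy induction yields the uniform bound
\[
 \tilde\pi_M(j)\le \tilde\pi_M(0)\,\frac{(2\beta/\alpha)^j}{j!}\le\frac{(2\beta/\alpha)^j}{j!},\qquad j\ge 0,
\]
with the convention $\tilde\pi_M(j)=0$ for $j\ge M$. This single estimate does the real work: it is summable uniformly in $M$, so $\{\tilde\pi_M\}$ is tight and no mass escapes toward the absorbing boundary, and summing it gives $1\le e^{2\beta/\alpha}\,\tilde\pi_M(0)$, so $\tilde\pi_M(0)\ge e^{-2\beta/\alpha}>0$ uniformly in $M$.

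The final step is to pass to the limit $M\to\infty$. By tightness and a diagonal extraction, every subsequence of $\{\tilde\pi_M\}$ has a further subsequence along which $\tilde\pi_M(j)\to\nu(j)$ for every $j$, with $\nu$ an honest probability distribution on $\mathbb{Z}_{\ge 0}$. Dividing the recursion by $M$ and letting $M\to\infty$ along this subsequence, using $a_j/M\to\beta$, $b_{j+1}/M\to\alpha(j+1)$ and $|\theta_M S_j^M|/M\le\beta/M\to 0$, gives $\alpha(j+1)\nu(j+1)=\beta\nu(j)$ for all $j\ge 0$, hence $\nu(j)=\nu(0)(\beta/\alpha)^j/j!$; normalization then forces $\nu(0)=e^{-\beta/\alpha}$, so $\nu$ is exactly the Poisson distribution \eqref{poisson}. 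Since every subsequential limit is this same distribution, $\tilde\pi_M\to\mathrm{Poisson}(\beta/\alpha)$ pointwise, and hence in total variation by Scheff\'{e}'s lemma, which proves the claim.

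The one genuinely delicate point is the tightness of $\{\tilde\pi_M\}$ — ruling out that a non-vanishing fraction of the quasi-stationary mass drifts toward the absorbing state $X_A=M$ as $M\to\infty$; everything else is routine bookkeeping. In this model that difficulty dissolves because the downward rate $b_j=\alpha j(M-j)$ dominates the upward rate $a_j=\beta(M-j)$ throughout the transient region, which is precisely what produces the uniform bound $\tilde\pi_M(j)\le(2\beta/\alpha)^j/j!$ that simultaneously controls the upper tail and keeps $\tilde\pi_M(0)$ bounded away from zero.
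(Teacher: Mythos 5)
Your proof is correct and follows the same basic strategy as the paper's: write the quasi-stationary eigenvalue problem \eqref{98e} for the birth--death chain in $X_A$, identify $\theta_M=-\beta\,\tilde\pi_M(M-1)$ as the (uniformly bounded) absorption flux, divide by $M$, and pass to the limit to obtain the detailed-balance relations $\alpha(j+1)\nu(j+1)=\beta\nu(j)$ of the Poisson distribution. Where you genuinely add something is in how the limit is justified. The paper invokes ``standard limiting arguments'' to pass from the rescaled system \eqref{989898} to the limiting difference equations together with the normalization $\sum_i\pi_i=1$; this silently assumes that no quasi-stationary mass escapes to infinity (equivalently, toward the absorbing boundary) as $M\to\infty$, which is exactly the point one would want to check. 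You close that gap: telescoping the eigenvalue equation into the cut-flux form $b_{j+1}\tilde\pi_M(j+1)=a_j\tilde\pi_M(j)+\theta_M S_j^M$ and using $\theta_M\le 0$ yields the $M$-uniform domination $\tilde\pi_M(j)\le(2\beta/\alpha)^j/j!$, which gives tightness, forces every subsequential limit to be a genuine probability distribution, and hence upgrades the formal limit to actual pointwise (and, via Scheff\'e, total variation) convergence. All the individual computations check out: $a_j/b_{j+1}\le 2\beta/[\alpha(j+1)]$ for $0\le j\le M-2$, the bound $|\theta_M S_j^M|/M\le\beta/M$, and the limits $a_j/M\to\beta$, $b_{j+1}/M\to\alpha(j+1)$. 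In short, your argument buys a complete proof of the tightness step that the paper leaves implicit, at the cost of a slightly longer derivation; the paper's version is shorter but relies on the reader supplying precisely the estimate you wrote down.
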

\begin{proof}
We first of all reformulate the propensities (\ref{propensities}) to correspond to the chain tracking the number of $A$ molecules. Paremeterizing the model by $M$, and keeping the convention that $\lambda^M$ refer to ``birth'' and $\mu^M$ refers to ``death'' (now with respect to species $A$), we have
\begin{equation}
\label{propensities2}
\begin{split} \lambda^M(i) & = \beta (M-i) \\ \mu^M(i) & = \alpha i(M-i).\end{split}
\end{equation}
Note that this chain has a reflecting state at $X_A = 0$ and an absorbing state at $X_A = M$. The transient states are $\left\{ 0, \ldots, M-1 \right\}$ so that the generator of (\ref{def:cme}) of the Markov process conditioned on non-absorption (the $\A_Q$ in (\ref{98e})) has support on these states. By (\ref{98e}) we have that the quasi-stationary distribution for a given $M$ satisfies 
\begin{align}
\begin{split}
	-\lambda^M(0) \tilde \pi_0^M + \mu^M(1)\tilde \pi_{1}^M &= \theta^M \tilde \pi_0^M\\
	\lambda^M(i-1) \tilde{\pi}^M_{i-1} - (\lambda^M(i) + \mu^M(i)) \tilde{\pi}^M_i + \mu^M(i+1) \tilde{\pi}^M_{i+1} &= \theta^M \tilde{\pi}^M_i\\
 -\lambda^M(M-2) \tilde \pi_{M-2}^M + \mu^M(M-1)\tilde \pi_{M-1}^M &= \theta^M \tilde \pi_{M-1}^M
\end{split}
\label{989898}
\end{align}
where $i=1, \ldots, M-2$ and $\theta^M$ is the eigenvalue corresponding to the eigenvector equation (\ref{98e}) for the given $M$. It can be directly computed that $\theta^M = \beta \tilde{\pi}_M^M$, which we note is uniformly bounded in $M$ \cite{Nasell1996}. Substituting $\theta^M = \beta \tilde \pi_M^M$ and (\ref{propensities2}) into (\ref{989898}) yields
\begin{align*}
\begin{split}
	-\beta M \tilde \pi_0^M + \alpha(M-1)\tilde \pi_{1}^M &= \beta \tilde{\pi}_M^M \tilde \pi_0^M\\
	\beta(M-(i-1))\tilde{\pi}^M_{i-1} - (\beta(M-i) + \alpha i (M-i)) \tilde{\pi}^M_i + \alpha (i+1)(M-(i+1))\tilde{\pi}^M_{i+1} &= \beta \tilde{\pi}_M^M \tilde{\pi}^M_i,\\
	 -2 \beta \tilde \pi_{M-2}^M + \alpha \tilde \pi_{M-1}^M & = \beta \tilde{\pi}_M^M \tilde \pi_{M-1}^M.
\end{split}
\end{align*}
Dividing by $M$ yields
\begin{align*}
-\beta  \tilde \pi_0^M + \alpha \left(1-\frac{1}{M}\right)\tilde \pi_{1}^M &= \frac{ \beta \tilde{\pi}_M^M \tilde \pi_0^M}{M}\\
	\beta\left(1-\frac{(i-1)}{M}\right)\tilde{\pi}^M_{i-1} - \left(\beta \left(1-\frac{i}{M}\right) + \alpha i \left(1-\frac{i}{M}\right)\right) \tilde{\pi}^M_i &\\
	+ \alpha (i+1)\left(1-\frac{(i+1)}{M}\right)\tilde{\pi}^M_{i+1} &= \frac{\beta \tilde{\pi}_M^M \tilde{\pi}^M_i}{M},
	\end{align*}
for $i=1, \ldots, M-2$.   The use of standard limiting arguments shows that as $M\to \infty$, the vector $\tilde \pi^M$ converges to the solution of the difference equations
\begin{align*}
	-\beta   \pi_0 + \alpha  \pi_{1} &=0\\
	\beta \pi_{i-1} - \left(\beta + \alpha i \right) \pi_i + \alpha (i+1) \pi_{i+1} &= 0,
\end{align*}
where $i\geq 1$, subject to the constraint $\sum_{i=0}^\infty \pi_i = 1$. This is known to be Poisson with parameter $\beta/\alpha$ yielding (\ref{poisson}) \cite{Lawler}.
\end{proof}

\subsection{EnvZ/OmpR signaling system}

Reconsider the proposed EnvZ/OmpR signal transduction system (\ref{system2}). Since the network satisfies the assumptions of Theorem \ref{thm:marty} and is furthermore conservative by (\ref{922}), it follows by Theorem \ref{thm:cor1} that trajectories $\mathbf{X}(t)$ converge almost surely in finite time to an absorbing set near the boundary. 

\begin{wraptable}{r}{7cm}
	\centering
	\footnotesize{
	\begin{tabular}{|rl|rl|}
		\hline
		$k_1$ & $0.5$ $s^{-1}$ & $\kappa_1$ & $0.5$\\
\rowcolor[gray]{0.9}		$k_2$ & $0.5$ $s^{-1}$ & $\kappa_2$ & $0.5$\\
		$k_3$ & $0.5$ $s^{-1}$ & $\kappa_3$ & $0.5$\\
\rowcolor[gray]{0.9}		$k_4$ & $0.5$ $s^{-1}$ & $\kappa_4$ & $0.5$\\
		$k_5$ & $0.1$ $s^{-1}$ & $\kappa_5$ & $0.1$\\
\rowcolor[gray]{0.9}		$k_6$ & $0.5$ $\mu M^{-1} s^{-1}$ & $\kappa_6$ & $0.02$\\
		$k_7$ & $0.5$ $s^{-1}$ & $\kappa_7$ & $0.5$\\
\rowcolor[gray]{0.9}		$k_8$ & $0.5$ $s^{-1}$ & $\kappa_8$ & $0.5$\\
		$k_9$ & $0.5$ $\mu M^{-1} s^{-1}$ & $\kappa_9$ & $0.02$\\
\rowcolor[gray]{0.9}		$k_{10}$ & $0.5$ $s^{-1}$ & $\kappa_{10}$ & $0.5$\\
		$k_{11}$ & $0.1$ $s^{-1}$ & $\kappa_{11}$ & $0.1$\\
		\hline
\rowcolor[gray]{0.9}		$[D]$ & $1$ $\mu M$ & $n_A$ & $6.022 \times 10^{23}$ \\
		$[T]$ & $1$ $\mu M$ & $V$ & $4.151 \times 10^{-17}$ $L$ \\
		\hline
	\end{tabular} \vspace{0.15in}
	}
	\caption{Parameter values used for numerical simulations of (\ref{system2}).}
	\label{parameters}
\end{wraptable} 

To further analyze the system, we would like to compute the expected times until absorption $\mathbbm{E}(\tau_A(\mathbf{X}))$ and the quasi-stationary distribution $\tilde{\pi}$. The complexity of the network, however, prohibits explicit derivation of either of these quantities. We choose instead to approximate these quantities by numerical simulation. The parameter values we use for all simulations are given in Table 2 and are in close agreement with those contains in Table 1 of \cite{Igoshin2008}. We note several differences. For instance, the authors of \cite{Igoshin2008} do not consider the mechanism for ADP or ATP binding with the source kinase, EnvZ. Neither do they consider the EnvZ-ADP compound as a regulator for the phosphorylation of OmpR. We have filled in these rate constants  with values of the same order as the known rates of the system.


In order to convert the deterministic rate parameters into stochastic rate parameters, we adjust the second-order reactions ($k_6$ and $k_9$) by a factor of $\kappa_i = k_i / (n_A V)$ where $n_A$ is Avogadro's number and $V$ is the volume of the {\it E. coli} cell. The volume of an {\it E. coli} cell has been estimated at $V =10^{-15}$ $L$ \cite{Cai2002}; for numerical simplicity, however, we choose $V = 25 / n_A$ so that $n_A V = 25$. After converting micromolar units $\mu M$ to molar units $M$, this gives a value of $V = 4.151 \times 10^{-17}$ $L$. It can be easily checked in (\ref{cyp}) that the deterministic parameter values in Table \ref{parameters} give
\[\bar{c}_{Y_p} = \frac{k_1k_3k_5(k_{10}+k_{11})[T]}{k_2(k_4+k_5)k_9k_{11}[D]} = 1 \; \mu M.\]
In a cell of volume $V$ given above, this corresponds to a molecular count of
\[\bar{Y}_p = n_A V \bar{c}_{Y_p} = 25\]
for the corresponding stochastic model. We therefore expect the marginal distribution of the quasi-stationary distribution in $Y_p$ to have a mean of roughly $25$ molecules. We furthermore note that there are two conservation relationships to consider, one in the signaling protein EnvZ ($X$), and one in the response regulator OmpR ($Y$). It was found in \cite{Cai2002} that a typical {\it E. coli} cell has roughly $100$ total molecules of EnvZ and $3500$ total molecules of OmpR. Therefore, for our estimations of $\tilde{\pi}$, we chose a ratio of $Y_{tot}:X_{tot}$ of $35:1$, where $X_{tot}$ and $Y_{tot}$ are defined in \eqref{922}.

 We use several numerical estimators for the quantities of interest.  Throughout, we denote by $\left\{ \mathbf{X}_{[i]}(t) \right\}_{i = 1}^N$ an ensemble of trajectories, where $\X_{[i]}$ is the $i$th independent trajectory.  All simulations are carried out using Gillespie's algorithm \cite{Gillespie}.   We note that the state $\X_A$ is the absorption state given in \eqref{system2boundary}.

\begin{figure*}
   \centering
	\footnotesize{
	\begin{subtable}[b]{0.2\textwidth}
	\centering
	\begin{tabular}{c|c}
		$Y_{tot}$ & $\mathbbm{E}(\tau_A(\mathbf{X}_0))$\\ 
		& (in $s$) \\
		\hline
		\hline
		$10$ & $9.3785$ \\
		$12$ & $31.215$ \\
     	$14$ & $56.022$ \\
     	$16$ & $86.413$ \\
     	$18$ & $123.26$ \\
     	$20$ & $167.19$ \\
     	$22$ & $221.67$ \\
     	$24$ & $294.05$ \\
		\hline
	\end{tabular} \vspace{0.15in}
	\caption{Small-scale}
	\end{subtable}
	\qquad
	\begin{subtable}[b]{0.2\textwidth}
	\centering
	\begin{tabular}{c|c}
		$Y_{tot}$ & $\mathbbm{E}(\tau_A(\mathbf{X}_0))$ \\
		& (in $s$) \\
		\hline
		\hline
     	$26$ & $382.73$ \\
     	$28$ & $500.94$ \\
     	$30$ & $668.34$ \\
     	$32$ & $905.29$ \\
     	$34$ & $1270.7$ \\
     	$36$ & $1912.7$ \\
     	$38$ & $3158.6$ \\
     	$40$ & $5702.1$ \\
		\hline
	\end{tabular} \vspace{0.15in}
	\caption{Mid-scale}
	\end{subtable}
	}
	\qquad
	\begin{subfigure}[b]{.35\textwidth}
	\centering
	\includegraphics[width=\textwidth]{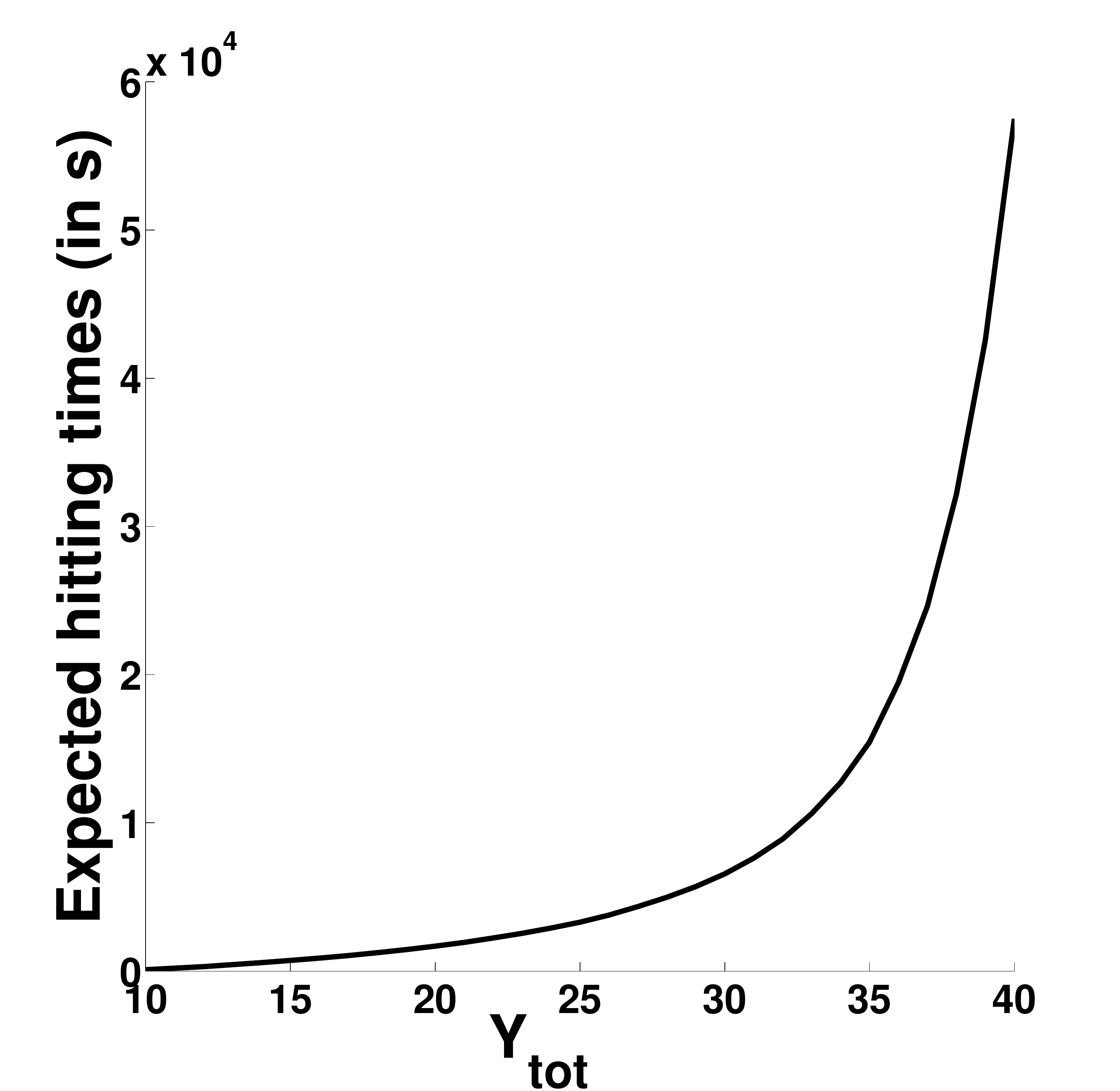}
	\caption{Plot of expectations}
	\end{subfigure}

	\caption{Expected time until absorption for various values of $Y_{tot}$ when $X_{tot}=1$.}
	\label{figure19}
	\hspace*{\fill}
\end{figure*}
 
 We outline the estimators here:
\begin{enumerate}
	\item
	The estimator for the expected time until absorption is
\begin{equation}
\label{estimator1}
\mathbbm{E}(\tau_A(\mathbf{X})) \approx \frac{1}{N} \sum_{i=1}^N T_i
\end{equation}
where $T_i = \inf \left\{ t \geq 0 \; | \; \mathbf{X}_{[i]}(T) = \mathbf{X}_A, \mathbf{X}_{[i]}(0) = \mathbf{X} \right\}$ and $\mathbf{X}$ is the state
\begin{equation}
\label{oneaway}
\begin{split}
Y_p & = Y_{tot} \\
X_p & = X_{tot}-1 \\
XT & = 1 \\
XD & = X = X_pY = Y = XDY_p = 0.
\end{split}
\end{equation}
Note that a single occurrence of the reaction $XT \to X_p$ takes (\ref{oneaway}) to (\ref{system2boundary}). That is to say, we compute the average time to absorption for an ensemble of realizations which start one step away from the absorbing state.
\item
The estimator for  the probability distribution of $Y_p$ at a fixed time $T$ from initial distribution $\nu$ is
\begin{equation}
\label{estimator3}
P_\nu(Y_p(t) = j) \approx \frac{1}{N} \sum_{i=1}^{N} \mathbf{1}(Y_{p,[i]}(T) = j)
\end{equation}
where $Y_{p,[i]}(T)$ is the value of species $Y_p$ at time $T$ for the $i$th simulation, seeded according to the distribution $\nu$. 
\item
The estimator for the  quasi-stationary distribution of $Y_p$ at time $T$ is
\begin{equation}
\label{estimator2}
\tilde{\pi}(Y_p = j) \approx \frac{1}{N_s} \sum_{i=1}^{N} \mathbf{1}(Y_{p,[i]}(T) = j) \cdot \mathbf{1}(\mathbf{X}_{[i]}(T) \not= \mathbf{X}_A).
\end{equation}
where $\mathbf{X}_A$ is the state (\ref{system2boundary}), $Y_{p,[i]}(T)$ is the value of $Y_p$ at time $T$ for the $i$th simulation, and $N_s\le N$ is the number of the $N$ simulations which did not enter the absorbing state (\ref{system2boundary}) by time $T$. In other words, we compute the ensemble $\left\{ \mathbf{X}_{[i]}(t) \right\}$ up to a fixed time $T$ and record the value only if the chain has not entered the absorbing state (\ref{system2boundary}). We then average over these non-absorbed trajectories.

\item In order to approximate the quasi-stationary distribution for extremely large values of $X_{tot}$ and $Y_{tot}$, which are required for Fig. \ref{figure21}, the ensemble estimator of \eqref{estimator2} proved to be infeasible.  Instead, we approximated the model by setting the rate of the transition to the absorbing state to zero, thereby producing an ergodic process which closely follows the dynamics of the actual process.  We then used time-averaging techniques to estimate the resulting  stationary distribution.  See  \cite{BreyerRoberts1999} for a connection between the time averaged process and the quasi-stationary distribution.  We note that in all our simulations using this method for the values of $X_{tot}$ and $Y_{tot}$ as given in Fig. \ref{figure21}, we never observed this change in the dynamics playing a role in the simulation.  That is, the paths never would have made the transition to the absorbing state even if we had allowed such a transition to occur.
\end{enumerate}

We note that the estimators (\ref{estimator1}) and (\ref{estimator3}) are known to converge almost surely in the limit $N \to \infty$.    The estimator (\ref{estimator2}) produces the Yaglom limit as $N\to \infty$ followed by $T\to \infty$.  We know this limit to be the quasi-stationary distribution in the present setting. 
\begin{figure*}[t]
   \centering
	\begin{subfigure}[b]{.45\textwidth}
	\centering
	\includegraphics[width=\textwidth]{Probabilities-Matt-mod.pdf}
	\caption{Evolution from $X = X_{tot}$, $Y = Y_{tot}$}
	\end{subfigure}
    \qquad
	\begin{subfigure}[b]{.45\textwidth}
	\centering
	\includegraphics[width=\textwidth]{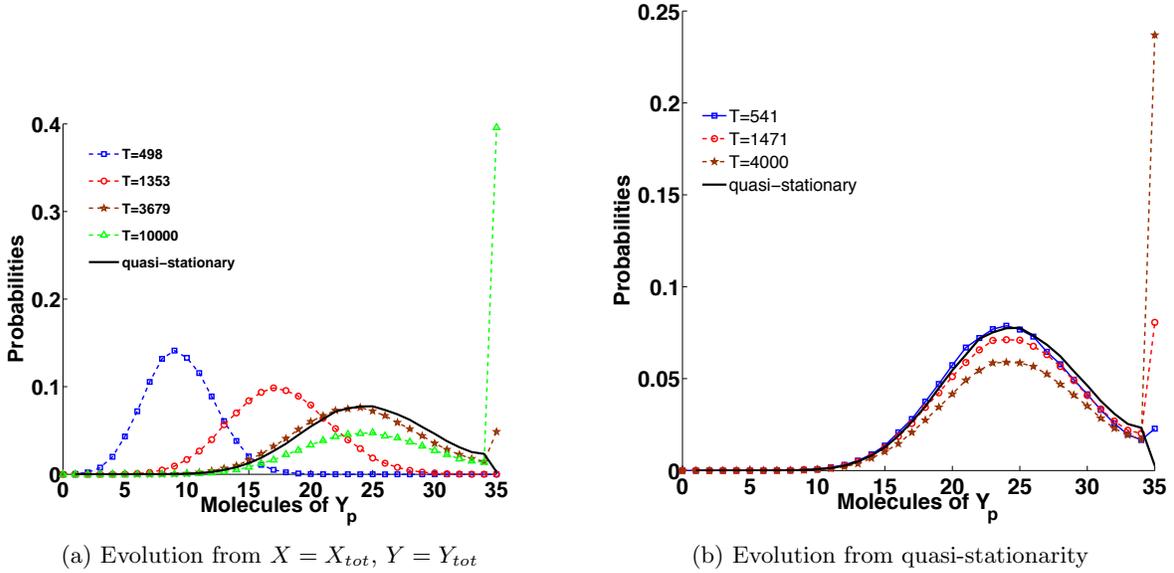}
	\caption{Evolution from quasi-stationarity}
	\end{subfigure}
	\caption{Distributions as a function of time with $X_{tot} =1$ and $Y_{tot} = 35$.  In (a), a point mass is used as the initial distribution, whereas in (b) the initial distribution is the quasi-stationary distribution.}
	\label{figure20}
	\hspace*{\fill}
\end{figure*}

\begin{wrapfigure}{l}{3.3in}
	\centering
	\includegraphics[width=0.47\textwidth]{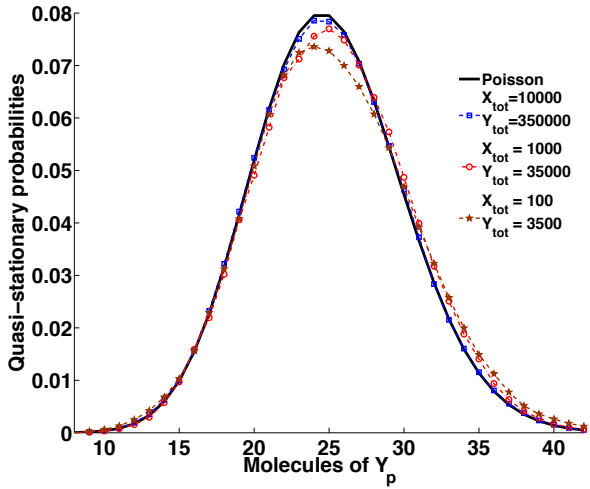}
	\caption{Approximations of the  quasi-stationary distribution of $Y_p$ using the estimator (\ref{estimator2}). The ratio of $Y_{tot} : X_{tot}$ of $35 : 1$ is maintained throughout. For comparison, the Poisson distribution with mean $25$ is overlain (black). }
	\label{figure21}
	\hspace*{\fill}
\end{wrapfigure}

In Fig. \ref{figure19}, we provide approximations of the expected time until absorption $\mathbbm{E}(\tau_A(\mathbf{X}_0))$ for various values of $Y_{tot}$ when $X_{tot}=1$.  Ten thousand data points where sampled for each value of $Y_{tot}$ using the estimator (\ref{estimator1}).  Note that the deterministic model predicts an attracting boundary state for the values in table (a) and an attracting positive equilibrium for the values in table (b). The large expected times until absorption as $Y_{tot}$ grows is clear and justifies the consideration of quasi-stationary distributions.

In Figure \ref{figure20}, we estimate the distribution of the (non-conditioned) process at various times for $X_{tot}=1$ and $Y_{tot}=35$. The estimator \eqref{estimator3} is used to produce the time-dependent distributions of the original process, whereas the estimator \eqref{estimator2} is used to produce the quasi-stationary distribution (using a time $T = 10,000$).  In (a), the initial state is $X(0) = X_{tot}$, $Y(0) = Y_{tot}$ while, in (b), the initial distribution is the quasi-stationary distribution of the process.  Convergence toward the absorbing state (\ref{system2boundary}) is clear. It is also worth noting that the shape of the distribution in (b) away from $Y_p=35$ remains constant, as expected from the definition (\ref{8392}). $100,000$ sample trajectories were computed for the estimates in (a) while $300,000$ sample trajectories were computed for (b).

In Fig. \ref{figure21} we provide approximations of the quasi-stationary distribution of $Y_p$ using the estimator (\ref{estimator2}). The convergence to Poisson (in black) is striking, 
but may only be conjectured at the current time.

%
%

\end{document}